\documentclass{amsart}

\usepackage[T1]{fontenc}
\usepackage{amsfonts}
\usepackage{amsfonts,amsmath,amssymb}
\usepackage{mathrsfs,mathtools,stmaryrd,wasysym}
\usepackage{enumerate,enumitem}
\usepackage{esint}
\usepackage{graphicx,psfrag}
\usepackage{hyperref}
\usepackage{stackengine}
\usepackage{yfonts}
\usepackage{color}
\usepackage[linesnumbered,algoruled,boxed]{algorithm2e}

\newtheorem{theorem}{Theorem}[section]
\newtheorem{lemma}[theorem]{Lemma}
\newtheorem{corollary}[theorem]{Corollary}
\newtheorem{proposition}[theorem]{Proposition}

\theoremstyle{definition}
\newtheorem{definition}[theorem]{Definition}

\theoremstyle{remark}
\newtheorem{remark}[theorem]{Remark}

\theoremstyle{assumption}
\newtheorem{assu}[theorem]{Assumption}

\numberwithin{equation}{section}

\begin{document}

\title[Error estimates for an unregularized optimal control problem]{Error estimates for an unregularized optimal control problem for the stationary Navier-Stokes equations}
\author{Francisco Fuica$^{\dagger}$}
\address{$^{\dagger}$Departamento de Matem\'atica y Ciencia de la Computaci\'on, Universidad de Santiago de Chile, Santiago, Chile.}
\email{francisco.fuica@usach.cl}
\thanks{The first author has been partially supported by ANID through FONDECYT grant 11260142.}

\author{Nicolai Jork$^{\ddagger}$}
\address{$^{\ddagger}$Department of Mathematics, Eberhard-Karls-Universit\"at T\"ubingen, D-72076 T\"ubingen, Germany.}
\email{nicolai.jork@uni-tuebingen.de}
\thanks{This work was funded by the Deutsche Forschungsgemeinschaft (German Research Foundation) – Project number 577735529.}

\subjclass[2010]{Primary 
35Q35,         
35Q30,         
49M25,		   
65N15,         
65N30.         
}

\keywords{optimal control, bang-bang control, Navier-Stokes equations, convergence, a priori error estimates, a posteriori analysis.} 

\date{}

\dedicatory{}

\begin{abstract}
We consider an unregularized optimal control problem subject to the steady-state Navier-Stokes equations. 
We derive the existence of optimal solutions and prove first- and second-order optimality conditions. 
To approximate solutions to the optimal control problem, we consider the variational discretization scheme.
We analyze convergence properties of the discretization and prove a priori error estimates for locally optimal controls that are nonsingular and which satisfy a growth condition which implies a bang-bang structure.  
We also propose a residual-type a posteriori error estimator that accounts for the discretization of the state and adjoint equations, and prove suitable reliability properties for such an error estimator.
\end{abstract}

\maketitle

\section{Introduction}

In this work, we are interested in the analysis and discretization of an affine optimal control problem governed by the Navier--Stokes equations; bilateral control constraints are also considered.
To make matters precise, let $\Omega\subset\mathbb{R}^n$, with $n\in \{2, 3\}$, be an open, bounded, and convex polygonal/polyhedral domain with boundary $\partial\Omega$. 
Given $\mathbf{u}\in \mathbf{L}^{2}(\Omega)$, we consider the stationary Navier--Stokes equations
\begin{align}\label{eq:state_equations}
-\nu\Delta \mathbf{y}+(\mathbf{y}\cdot\nabla)\mathbf{y}+\nabla p = \mathbf{u} \text{ in } \Omega, \quad \text{div }\mathbf{y}=0 \text{ in } \Omega,\quad \mathbf{y}=\boldsymbol 0  \text{ on } \partial\Omega.
\end{align}
Here, $\nu> 0$ denotes the kinematic viscosity.
We notice that problem \eqref{eq:state_equations} may have several solutions. 
Let us denote by $S(\mathbf{u})$ the set of (weak) solutions of \eqref{eq:state_equations} associated with $\mathbf{u}\in \mathbf{L}^{2}(\Omega)$.
Given a desired state $\mathbf{y}_\Omega \in \mathbf{L}^2(\Omega)$, we define the cost function
\begin{equation}\label{def:cost_functional}
\mathcal{J}(\mathbf{u}):=\inf\left\{\frac{1}{2}\|\mathbf{y}_{\mathbf{u}}-\mathbf{y}_\Omega\|_{\mathbf{L}^{2}(\Omega)}^2 ~ : ~ (\mathbf{y}_{\mathbf{u}}, p_{\mathbf{u}})\in S(\mathbf{u})\right\}.
\end{equation}
Note that the function $\mathcal{J}(\mathbf{u})$ reduces to $\frac{1}{2}\|\mathbf{y}_{\mathbf{u}}-\mathbf{y}_\Omega\|_{\mathbf{L}^{2}(\Omega)}^2$ when the problem \eqref{eq:state_equations} has a unique solution.
We shall be concerned with the following optimal control problem: 
\begin{align}\label{eq:minimize_cost_func}
\min_{\mathbf{u}\in \mathbf{U}_{ad}}{\mathcal{J}(\mathbf{u})} \quad \text{subject to} \quad \eqref{eq:state_equations} \quad \text{and}\quad \mathbf{u}\in \mathbf{U}_{ad},
\end{align}
where $
\mathbf{U}_{ad}:=\{ \mathbf{v}\in \mathbf{L}^2(\Omega): \mathbf{a} \leq \mathbf{v} \leq \mathbf{b} \text{ a.e. } \text{in } \Omega \}
$,
with $\mathbf{a}, \mathbf{b} \in  \mathbb{R}^n$ satisfying $\mathbf{a}_i < \mathbf{b}_i$ for every $i\in\{1,\ldots, n\}$.

In recent years, the study of affine optimal control problems has received considerable attention. Although its origin lies in optimal control problems
subject to ordinary differential equations (ODEs), this has rapidly evolved to problems subject to partial differential equations (PDEs). In this regard, the study of bang-bang optimal controls in the context of elliptic equations can be found in \cite{zbMATH06111099, CDJ2023, zbMATH06797168, zbMATH06982426, zbMATH07643482, zbMATH07865499, 2026arXiv260214632W} and the references therein.
For results on numerical approximation and error analysis of these problems, particularly without the use of regularization strategies, we refer the
reader to \cite{CM2021,MR2891922, Fuica2024,2025arXiv250504439F,MR4791221,zbMATH08071521}. 
We also mention the recent work \cite{2025arXiv250924829W}, in which the authors study the continuous differentiability of the signum function and its application to Newton's method to solve bang-bang optimal control problems numerically, proving that the signum function is Fréchet differentiable between suitable function spaces.

For problems governed by the Navier--Stokes equations, considerably fewer results are available. 
To our knowledge, the first work that studied a bang-bang optimal control problem subject to the instationary Navier-Stokes equations is \cite{zbMATH06811877}, where first- and second-order optimality conditions are derived together with a priori error estimates. 
The authors propose a fully discrete scheme combining discontinuous Galerkin methods in time with conforming finite elements in space, and establish convergence rates for the state variable in the $L^{2}$-norm under suitable discretization assumptions; error estimates for the control variable were not derived.

To the best of our knowledge, this is the first work to prove control error estimates for a variational discretization of an unregularized affine optimal control problem governed by the \emph{stationary} Navier–Stokes equations with pointwise control constraints.
Such unregularized formulations are known to induce bang–bang optimal controls and lead to significant analytical challenges, particularly due to the lack of additional smoothing typically provided by Tikhonov regularization.
Although the structure of optimal controls in this setting is fairly understood, the development of a rigorous numerical analysis for the underlying nonlinear PDE-constrained problem remains comparatively limited, especially in the case of nonlinear PDEs.
The main contribution of this work is a detailed error analysis for a variational discretization scheme applied to this class of problems. 
We establish a priori and a posteriori error estimates for locally optimal solutions under relatively mild regularity assumptions, including error bounds in the $\mathbf{L}^{\infty}(\Omega)$-norm for an auxiliary adjoint velocity field and in the $\mathbf{L}^{1}(\Omega)$-norm for the control variable. 
Our approach combines refined regularity results for the Navier–Stokes equations and their linearizations with stability properties of the optimality system, allowing us to handle the nonlinear coupling without imposing a global smallness condition on the data; instead, the analysis is carried out locally around a nonsingular solution, i.e., a solution for which the linearized Navier–Stokes operator is an isomorphism  (cf. \cite[Chapter IV, \S~3]{MR851383}).
In particular, we derive error estimates that reflect the intrinsic nonsmooth structure of bang–bang controls, without introducing any regularization parameter.
The contributions of this work are summarized as follows:

$\bullet$ \emph{Optimality conditions.} We derive the existence of solutions and, assuming that $\bar{\mathbf{u}}$ is a local nonsingular solution, we obtain first-order necessary optimality conditions and sufficient second-order optimality conditions. 

$\bullet$ \emph{Error estimates for the adjoint equation.} Assuming that $\Omega$ is a convex polytope and $\mathbf{y}_{\Omega}\in \mathbf{L}^{\mathsf{p}}(\Omega)$ with $\mathsf{p} > n$, in Proposition \ref{prop:max_norm_estimate_zz}, we prove a priori error estimates for an auxiliary adjoint velocity field in the maximum norm. 
In particular, our result is also nearly optimal with respect to regularity.
We also design a residual-type a posteriori error estimator for an auxiliary adjoint equation and prove, under suitable assumption on discrete solutions, its global reliability; see Lemma \ref{lemma:a_post_adj_hat_discrete}. 

$\bullet$ \emph{A priori error estimates.} To approximate optimal variables, we consider the variational discretization approach introduced in \cite{MR2122182}: we discretize the optimal state and adjoint state variables using the lowest order Taylor--Hood finite element spaces, while the control variable is not discretized.  Assuming that $\Omega$ is a convex polytope, $\mathbf{y}_\Omega \in \mathbf{L}^\mathsf{p}(\Omega)$ with $\mathsf{p} > n$, and the growth condition \eqref{sufficientcon}, we derive the error bound (see Theorem \ref{thm:estimate_control} and Corollary \ref{coro:error_state})
    \[
    \|\bar{\mathbf{u}} - \bar{\boldsymbol{u}}_{h}\|_{\mathbf{L}^{1}(\Omega)} + \|\bar{\mathbf{y}} - \bar{\mathbf{y}}_{h}\|_{\mathbf{L}^{2}(\Omega)} 
    \lesssim
    (h |\log h|)^{\gamma},
    \]
with $\gamma \in (n/(n + 2), 1]$. 
We note that the growth condition \eqref{sufficientcon} is a weaker form of optimality designed for optimal control problems where the controls are expected to exhibit a bang-bang structure, which is typical of unregularized affine problems. 
We recall that our approach does not involve a regularization parameter, thus avoiding the simultaneous treatment of discretization and regularization errors.

$\bullet$ \emph{A posteriori error estimates.} We propose a residual-type a posteriori error estimator that accounts for all the discrete optimal variables.
Assuming that $\Omega$ is a convex polytope, $\mathbf{y}_\Omega \in \mathbf{L}^\mathsf{p}(\Omega)$, and considering assumptions on discrete solutions similar to those utilized in \cite{MR4301394}, we prove a global reliability property for such an error estimator (Theorem \ref{thm:rel_ocp}):
    \[
    \|\bar{\mathbf{u}} - \bar{\boldsymbol{u}}_{h}\|_{\mathbf{L}^{1}(\Omega)}
    \lesssim
    (\eta_{\textrm{st},2} + \eta_{\textrm{st},\mathsf{p}} + |\log h_{\min}|^{4/n}\eta_{\textrm{adj},\infty} +  \|\textnormal{div }\bar{\mathbf{y}}_{\ell}\|_{\mathbf{L}^{\mu}(\Omega)})^{\gamma},
    \]
with $\gamma \in (n/(n + 2), 1]$. 

We organize the remainder of the manuscript as follows. 
Section~\ref{sec:not_and_prel} introduces the notation and collects preliminary results related to the PDEs involved in the analysis. 
In Section~\ref{sec:ocp}, we establish existence of solutions and derive first- and second-order optimality conditions. 
The core of our work is contained in Sections~\ref{sec:fem} and~\ref{sec:apost_OCP}. 
In Section~\ref{sec:fem}, we introduce a semidiscrete approximation scheme and derive a priori error estimates for strict locally optimal solutions satisfying a growth condition. 
Finally, in Section~\ref{sec:apost_OCP}, we develop a residual-type a posteriori error analysis for the associated optimality system and prove suitable reliability properties of the resulting error estimator.


\section{Notation and preliminary remarks}\label{sec:not_and_prel}
Let us establish the notation and describe the framework in which we will work.


\subsection{Notation}\label{sec:notation}

Throughout the manuscript, we adopt standard notation for Lebesgue and Sobolev spaces, as well as their associated norms.
Given an open and bounded domain $\omega$, we denote by $(\cdot,\cdot)_{\omega}$ and $\| \cdot \|_{\omega}$ the inner product and the norm of $L^{2}(\omega)$, respectively.
We denote by $L^2_0(\omega)$ the space of functions in $L^2(\omega)$ that have zero average on $\omega$. 
We use uppercase bold letters to denote the vector-valued counterparts of the aforementioned spaces, while lowercase bold letters are used to denote vector-valued functions. 
For $n\in \{2,3\}$, we abbreviate $ \mathbf{H}^1(\Omega)=H^1(\Omega;\mathbb R^n)$, $\mathbf{H}_0^1(\Omega)=H^1_0(\Omega;\mathbb R^n)$, $\mathbf{H}^{-1}(\Omega)=\mathbf{H}^1_0(\Omega)'$, and $\mathbf{W}^{s,p}(\Omega)=W^{\mathsf{s},\mathsf{p}}(\Omega;\mathbb R^n)$ for $\mathsf{s}>0$ and $1\leq \mathsf{p} \leq \infty$.
We also define the set $\mathbf{V}(\Omega):= \{ \mathbf{v} \in \mathbf{H}_{0}^{1}(\Omega): \text{div }\mathbf{v}  = 0\}$.

If $\mathfrak{X}$ and $\mathfrak{Y}$ are normed vector spaces, we write $\mathfrak{X}\hookrightarrow\mathfrak{Y}$ to denote that $\mathfrak{X}$ is continuously embedded in $\mathfrak{Y}$. 
We denote by $\mathfrak{X}'$ the dual of $\mathfrak{X}$ and by $\langle \cdot,\cdot \rangle_{\mathfrak{X}',\mathfrak{X}}$ the duality pairing between $\mathfrak{X}'$ and $\mathfrak{X}$.
When the spaces $\mathfrak{X}'$ and $\mathfrak{X}$ are clear from the context, we simply denote $\langle \cdot,\cdot \rangle_{\mathfrak{X}',\mathfrak{X}}$ by $\langle \cdot,\cdot \rangle$.
The relation $\mathfrak{a} \lesssim \mathfrak{b}$ indicates that $\mathfrak{a} \leq C \mathfrak{b}$, with a positive constant that depends neither on $\mathfrak{a}$, $\mathfrak{b}$ nor on the discretization parameters. 
The value of the constant $C$ can vary from one occurrence to another.


\subsection{Results on the involved PDEs}

This section collects properties of solutions to the Navier--Stokes and suitable \emph{linearized} Navier--Stokes equations.

\subsubsection{Navier--Stokes equations}
We begin this section by defining the trilinear form
$
b(\mathbf{v}_1;\mathbf{v}_2,\mathbf{v}_3):=((\mathbf{v}_1\cdot \nabla)\mathbf{v}_2,\mathbf{v}_3)_{\mathbf{L}^2(\Omega)}
$, which satisfies the following properties: If $\mathbf{v}_1\in \mathbf{V}(\Omega)$ and $\mathbf{v}_2, \mathbf{v}_3 \in \mathbf{H}_0^1(\Omega)$, then
\begin{equation}\label{eq:properties_trilinear}
b(\mathbf{v}_1;\mathbf{v}_2,\mathbf{v}_3)=-b(\mathbf{v}_1;\mathbf{v}_3,\mathbf{v}_2), 
\qquad  
b(\mathbf{v}_1;\mathbf{v}_2,\mathbf{v}_2)=0;
\end{equation}
see, e.g., \cite[Chapter IV, Lemma 2.2]{MR851383} and \cite[Chapter II, Lemma 1.3]{MR603444}.
Moreover, the form $b$ is well defined and continuous on $[\mathbf{H}_0^1(\Omega)]^3$ and satisfies, for some $\mathcal{C}_b>0$, the following estimate (\cite[Lemma IX.1.1]{MR2808162} and \cite[Chapter II, Lemma 1.1]{MR603444}):
\[
|b(\mathbf{v}_1;\mathbf{v}_2,\mathbf{v}_3)|\leq \mathcal{C}_b\|\nabla \mathbf{v}_1\|_{\mathbf{L}^2(\Omega)}\|\nabla \mathbf{v}_2\|_{\mathbf{L}^2(\Omega)}\|\nabla \mathbf{v}_3\|_{\mathbf{L}^2(\Omega)}.
\]

We now present a weak formulation for the Navier--Stokes system \cite[Chapter IV, eq. (2.8)]{MR851383}: Given $\mathbf{f} \in \mathbf{H}^{-1}(\Omega)$, find $(\mathbf{y},p) \in \mathbf{V}(\Omega)\times L_0^2(\Omega)$ such that 
\begin{equation}\label{eq:weak_ns_divergence_free}
\nu(\nabla \mathbf{y}, \nabla \mathbf{v})_{\mathbf{L}^2(\Omega)}+b(\mathbf{y};\mathbf{y},\mathbf{v})-(p,\text{div } \mathbf{v})_{\Omega}  = \langle \mathbf{f},\mathbf{v} \rangle  ~~~  \forall \mathbf{v} \in \mathbf{H}_0^1(\Omega).
\end{equation}
The following result states the existence of solutions to problem \eqref{eq:weak_ns_divergence_free}. 
For its proof, we refer, e.g., to \cite[Chapter IV, Theorem 2.1]{MR851383} and \cite[Chapter II, Theorem 1.2]{MR603444}.
 
\begin{theorem}[existence of solutions]\label{thm:well_posedness_navier_stokes}
Let $\nu>0$ and $\mathbf{f} \in \mathbf{H}^{-1}(\Omega)$. Then, problem \eqref{eq:weak_ns_divergence_free} admits at least one solution $(\mathbf{y},p) \in \mathbf{V}(\Omega)\times L_0^2(\Omega)$ that satisfies the stability estimate $\|\nabla \mathbf{y}\|_{\mathbf{L}^2(\Omega)} + \|p\|_{\Omega} \lesssim \|\mathbf{f}\|_{\mathbf{H}^{-1}(\Omega)}$.
\end{theorem}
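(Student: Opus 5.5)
The plan is the classical Galerkin--Brouwer argument on the divergence-free space, followed by recovery of the pressure through the inf-sup condition.

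\emph{Step 1: a priori bound for the velocity.} Any solution $\mathbf{y}\in\mathbf{V}(\Omega)$ of the velocity problem
\[
\nu(\nabla \mathbf{y},\nabla \mathbf{v})_{\mathbf{L}^2(\Omega)}+b(\mathbf{y};\mathbf{y},\mathbf{v})=\langle \mathbf{f},\mathbf{v}\rangle \quad \forall\, \mathbf{v}\in\mathbf{V}(\Omega)
\]
can be tested with $\mathbf{v}=\mathbf{y}$. By \eqref{eq:properties_trilinear} we have $b(\mathbf{y};\mathbf{y},\mathbf{y})=0$, hence $\nu\|\nabla\mathbf{y}\|^2_{\mathbf{L}^2(\Omega)}\le \|\mathbf{f}\|_{\mathbf{H}^{-1}(\Omega)}\|\nabla \mathbf{y}\|_{\mathbf{L}^2(\Omega)}$. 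This gives $\|\nabla \mathbf{y}\|_{\mathbf{L}^2(\Omega)}\le \nu^{-1}\|\mathbf{f}\|_{\mathbf{H}^{-1}(\Omega)}$.

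\emph{Step 2: existence of a velocity.} Since $\mathbf{V}(\Omega)$ is a closed subspace of the separable space $\mathbf{H}_0^1(\Omega)$, choose a Galerkin basis $\{\mathbf{w}_k\}$ of $\mathbf{V}(\Omega)$ and let $V_m=\mathrm{span}\{\mathbf{w}_1,\dots,\mathbf{w}_m\}$. On $V_m$ define the continuous map $P_m$ by
\[
(\nabla P_m(\mathbf{y}),\nabla \mathbf{v})=\nu(\nabla\mathbf{y},\nabla\mathbf{v})+b(\mathbf{y};\mathbf{y},\mathbf{v})-\langle\mathbf{f},\mathbf{v}\rangle \quad \forall\, \mathbf{v}\in V_m .
\]
Then $(\nabla P_m(\mathbf{y}),\nabla\mathbf{y})\ge \|\nabla \mathbf{y}\|(\nu\|\nabla\mathbf{y}\|-\|\mathbf{f}\|_{\mathbf{H}^{-1}(\Omega)})>0$ on the sphere of radius $R>\nu^{-1}\|\mathbf{f}\|_{\mathbf{H}^{-1}(\Omega)}$. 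The Brouwer-type corollary (acute-angle lemma) therefore yields $\mathbf{y}_m\in V_m$ with $P_m(\mathbf{y}_m)=0$ and $\|\nabla \mathbf{y}_m\|\le R$.

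\emph{Step 3: passage to the limit.} Extract a subsequence with $\mathbf{y}_m\rightharpoonup\mathbf{y}$ weakly in $\mathbf{V}(\Omega)$; by Rellich, $\mathbf{y}_m\to\mathbf{y}$ strongly in $\mathbf{L}^2(\Omega)$, and even in $\mathbf{L}^r(\Omega)$ for $r<6$. The linear terms pass to the limit directly. For the trilinear term with fixed $\mathbf{v}\in V_k$ I would use the antisymmetry in \eqref{eq:properties_trilinear} to write $b(\mathbf{y}_m;\mathbf{y}_m,\mathbf{v})=-b(\mathbf{y}_m;\mathbf{v},\mathbf{y}_m)$. This equals $-\int (\mathbf{y}_m\otimes\mathbf{y}_m):\nabla \mathbf{v}$, which converges because $\mathbf{y}_m\otimes\mathbf{y}_m\to\mathbf{y}\otimes\mathbf{y}$ in $L^2(\Omega)$, using $\mathbf{L}^4$ strong convergence for $n\le 3$. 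Density of $\bigcup_k V_k$ in $\mathbf{V}(\Omega)$ and the continuity bound on $b$ then give the velocity equation for all $\mathbf{v}\in\mathbf{V}(\Omega)$. Weak lower semicontinuity keeps the bound of Step 1. I expect this nonlinear limit to be the main (though standard) obstacle.

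\emph{Step 4: the pressure.} The functional
\[
\mathbf{v}\mapsto \langle\mathbf{f},\mathbf{v}\rangle-\nu(\nabla\mathbf{y},\nabla\mathbf{v})-b(\mathbf{y};\mathbf{y},\mathbf{v})
\]
belongs to $\mathbf{H}^{-1}(\Omega)$ and vanishes on $\mathbf{V}(\Omega)$. By de Rham's theorem, equivalently the surjectivity of $\mathrm{div}:\mathbf{H}_0^1(\Omega)\to L^2_0(\Omega)$ (the inf-sup condition on the Lipschitz domain $\Omega$), there is a unique $p\in L^2_0(\Omega)$ representing it as $(p,\mathrm{div}\,\mathbf{v})_\Omega$, which is \eqref{eq:weak_ns_divergence_free}. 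The inf-sup constant $\beta$ and the continuity of $b$ give
\[
\beta\|p\|_\Omega\le \|\mathbf{f}\|_{\mathbf{H}^{-1}(\Omega)}+\nu\|\nabla\mathbf{y}\|+\mathcal{C}_b\|\nabla\mathbf{y}\|^2 .
\]
Combined with Step 1 this bounds $\|p\|_\Omega$. Note that this bound is linear in $\|\mathbf{f}\|_{\mathbf{H}^{-1}(\Omega)}$ only up to the quadratic term $\nu^{-2}\mathcal{C}_b\|\mathbf{f}\|^2_{\mathbf{H}^{-1}(\Omega)}$. I would therefore read the stated estimate with a constant depending on $\nu$ and an upper bound for $\|\mathbf{f}\|_{\mathbf{H}^{-1}(\Omega)}$, which suffices for the bounded control set $\mathbf{U}_{ad}$.
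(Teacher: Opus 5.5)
Your proof is correct and is the standard Galerkin--Brouwer argument with the pressure recovered from the inf-sup (de Rham) condition. This is exactly the proof in the references the paper cites for this theorem (Girault--Raviart, Ch.~IV, Thm.~2.1; Temam, Ch.~II, Thm.~1.2), since the paper gives no proof of its own; one small sign slip: with the convention in \eqref{eq:weak_ns_divergence_free}, the functional in Step~4 equals $-(p,\textnormal{div }\mathbf{v})_{\Omega}$, so you should take $-p$.

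Your caveat on the pressure is also right: one only gets $\|p\|_{\Omega}\lesssim \|\mathbf{f}\|_{\mathbf{H}^{-1}(\Omega)}+\nu^{-2}\mathcal{C}_b\|\mathbf{f}\|_{\mathbf{H}^{-1}(\Omega)}^{2}$, so the stated linear bound needs a constant depending on an upper bound for $\|\mathbf{f}\|_{\mathbf{H}^{-1}(\Omega)}$, which is harmless for the bounded set $\mathbf{U}_{ad}$. Moreover, your Steps~1 and~4 give this bound for every solution, which is what the proof of Theorem~\ref{thm:exsol} actually uses for its arbitrarily chosen $(\mathbf{y}_k,p_k)\in S(\mathbf{u}_k)$.
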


\begin{remark}[equivalent formulation]\label{rmk:equivalent}
We note that problem \eqref{eq:weak_ns_divergence_free} can be reformulated as follows: Find $(\mathbf{y},p)\in\mathbf{H}_0^1(\Omega)\times L_0^2(\Omega)$ such that 
\begin{equation}\label{eq:weak_navier_stokes_eq}
\nu(\nabla \mathbf{y}, \nabla \mathbf{v})_{\mathbf{L}^2(\Omega)}+b(\mathbf{y};\mathbf{y},\mathbf{v})-(p,\text{div } \mathbf{v})_{\Omega} \\
= \langle \mathbf{f},\mathbf{v} \rangle, \qquad (q,\text{div } \mathbf{y})_{\Omega}  =   0,
\end{equation}
for all $(\mathbf{v},q) \in \mathbf{H}_0^1(\Omega)\times L^2_0(\Omega)$, see, for example, \cite[Chapter IV, Section 2.1]{MR851383}.
\end{remark}

Since $\Omega$ is a convex polygon/polyhedron, if we assume that $\mathbf{f}\in \mathbf{L}^2(\Omega)$, then any solution $(\mathbf{y},p)\in \mathbf{H}_0^{1}(\Omega)\times L_0^{2}(\Omega)$ to \eqref{eq:weak_navier_stokes_eq} satisfies $(\mathbf{y},p) \in \mathbf{H}^2(\Omega) \times H^1(\Omega)$. We refer to \cite[Corollary 7.3.3.5]{MR775683} for $n=2$ and to \cite[Theorem 11.3.1]{MR2641539} for case $n=3$.


\subsubsection{Auxiliary equations}

We begin this section by introducing the concept of \emph{regular solution} to the Navier--Stokes equations (see, e.g., \cite[Definition 2.3]{MR2338434}).
\begin{definition}[regular solution]\label{def:reg_sol}
Let $(\mathbf{y},p)$ be a solution to \eqref{eq:state_equations} associated with some $\mathbf{u} \in \mathbf{U}_{ad}$, i.e., $(\mathbf{y},p)\in S(\mathbf{u})$. 
We say that $\mathbf{y}$ is regular if for every $\mathbf{g}\in\mathbf{H}^{-1}(\Omega)$ the problem: Find $(\boldsymbol{\varphi},\zeta)\in \mathbf{H}_0^1(\Omega)\times L_0^2(\Omega)$ such that
\begin{align}\label{eq:first_deriv_S*}
\begin{split}
\nu(\nabla \boldsymbol{\varphi}, \nabla \mathbf{v})_{\mathbf{L}^2(\Omega)} + b(\mathbf{y};\boldsymbol{\varphi},\mathbf{v}) + b(\boldsymbol{\varphi};\mathbf{y},\mathbf{v}) - (\zeta, \textnormal{div }\mathbf{v})_{\Omega} &\, = \langle \mathbf{g},\mathbf{v} \rangle \\
(q,\textnormal{div }\boldsymbol{\varphi})_{\Omega} &\, = 0
\end{split}
\end{align}
for all $(\mathbf{v},q)\in\mathbf{H}_0^1(\Omega)\times L_0^2(\Omega)$, is well posed.
\label{def:regular_solution}
\end{definition}

It can be proved, based on the standard inf-sup theory for saddle point problems, that $\mathbf{y}$ is regular if the kinematic viscosity $\nu$ is large enough; see, e.g., \cite[Remark 3.3]{MR3936891}.

\begin{proposition}[auxiliary estimate: linearized state I]\label{prop:aux_linear_states}
Let $\mathbf{u},\mathbf{u}_{\theta}\in \mathbf{U}_{ad}$ and $(\mathbf{y},p)\in S(\mathbf{u})$ and $(\mathbf{y}_{\theta},p_{\theta})\in S(\mathbf{u}_{\theta})$.
Assume that both $\mathbf{y}$ and $\mathbf{y}_{\theta}$ are regular solutions in the sense of Definition~\ref{def:reg_sol}. Let $\mathbf{g}\in \mathbf{H}^{-1}(\Omega)$ and denote by $(\boldsymbol{\varphi},\zeta)$ and $(\boldsymbol{\varphi}_{\theta},\zeta_{\theta})$ the unique solutions to \eqref{eq:first_deriv_S*} associated with $\mathbf{y}$ and
$\mathbf{y}_{\theta}$, respectively.
Then, we have
\begin{align*}
    \|\boldsymbol{\varphi}_{\theta} - \boldsymbol{\varphi}\|_{\mathbf{L}^{2}(\Omega)} 
    \lesssim
    \|\nabla(\mathbf{y} - \mathbf{y}_{\theta})\|_{\mathbf{L}^{\mathsf{p}}(\Omega)}\|\boldsymbol{\varphi}\|_{\mathbf{L}^{2}(\Omega)},
\end{align*}
with $\mathsf{p} > n$ arbitrarily close to $n$.
\end{proposition}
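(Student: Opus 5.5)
The plan is a duality (Aubin--Nitsche type) argument for the difference $\mathbf{e}:=\boldsymbol{\varphi}_{\theta}-\boldsymbol{\varphi}$, $\epsilon:=\zeta_\theta-\zeta$. Set $\mathbf{w}:=\mathbf{y}-\mathbf{y}_{\theta}\in\mathbf{V}(\Omega)$.

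\emph{Step 1: error equation.} Subtract the two instances of \eqref{eq:first_deriv_S*} (same $\mathbf{g}$), adding and subtracting $b(\mathbf{y}_\theta;\boldsymbol{\varphi},\mathbf{v})+b(\boldsymbol{\varphi};\mathbf{y}_\theta,\mathbf{v})$. This shows that $(\mathbf{e},\epsilon)$ solves the problem linearized at $\mathbf{y}_\theta$,
\[
\nu(\nabla\mathbf{e},\nabla\mathbf{v})_{\mathbf{L}^2(\Omega)}+b(\mathbf{y}_\theta;\mathbf{e},\mathbf{v})+b(\mathbf{e};\mathbf{y}_\theta,\mathbf{v})-(\epsilon,\textnormal{div }\mathbf{v})_\Omega=b(\mathbf{w};\boldsymbol{\varphi},\mathbf{v})+b(\boldsymbol{\varphi};\mathbf{w},\mathbf{v}),
\]
for all $\mathbf{v}\in\mathbf{H}_0^1(\Omega)$, together with $\textnormal{div }\mathbf{e}=0$. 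The right-hand side belongs to $\mathbf{H}^{-1}(\Omega)$. Since $\mathbf{y}_\theta$ is regular, the corresponding operator is an isomorphism.

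\emph{Step 2: dual problem.} Introduce $(\boldsymbol{\psi},\xi)\in\mathbf{H}_0^1(\Omega)\times L_0^2(\Omega)$ solving the adjoint of the linearization at $\mathbf{y}_\theta$ with datum $\mathbf{e}$:
\[
\nu(\nabla\mathbf{v},\nabla\boldsymbol{\psi})+b(\mathbf{y}_\theta;\mathbf{v},\boldsymbol{\psi})+b(\mathbf{v};\mathbf{y}_\theta,\boldsymbol{\psi})-(\xi,\textnormal{div }\mathbf{v})=(\mathbf{e},\mathbf{v}),\qquad \textnormal{div }\boldsymbol{\psi}=0.
\]
This problem is well posed because the adjoint of an isomorphism is an isomorphism, and $\|\nabla\boldsymbol{\psi}\|_{\mathbf{L}^2(\Omega)}\lesssim\|\mathbf{e}\|_{\mathbf{L}^2(\Omega)}$.

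Next I bootstrap to $\boldsymbol{\psi}\in\mathbf{H}^2(\Omega)$. Move the lower-order terms to the right-hand side; strongly they read $-(\mathbf{y}_\theta\cdot\nabla)\boldsymbol{\psi}+(\nabla\mathbf{y}_\theta)^{\intercal}\boldsymbol{\psi}$. Since $\mathbf{y}_\theta\in\mathbf{H}^2(\Omega)\hookrightarrow\mathbf{L}^\infty(\Omega)$, $\nabla\mathbf{y}_\theta\in\mathbf{L}^6(\Omega)$ and $\boldsymbol{\psi}\in\mathbf{L}^6(\Omega)$, these terms lie in $\mathbf{L}^2(\Omega)$. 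Stokes regularity on convex polytopes (the references quoted after Remark~\ref{rmk:equivalent}) then yields $\|\boldsymbol{\psi}\|_{\mathbf{H}^2(\Omega)}\lesssim\|\mathbf{e}\|_{\mathbf{L}^2(\Omega)}$. The constant depends on $\mathbf{y}_\theta$, which is allowed by the statement.

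\emph{Step 3: estimate.} Test the dual problem with $\mathbf{v}=\mathbf{e}$ and Step 1 with $\mathbf{v}=\boldsymbol{\psi}$. The pressure terms vanish because $\mathbf{e}$ and $\boldsymbol{\psi}$ are divergence-free, so
\[
\|\mathbf{e}\|^2_{\mathbf{L}^2(\Omega)}=b(\mathbf{w};\boldsymbol{\varphi},\boldsymbol{\psi})+b(\boldsymbol{\varphi};\mathbf{w},\boldsymbol{\psi}).
\]
For the first term, $\mathbf{w}\in\mathbf{V}(\Omega)$, so \eqref{eq:properties_trilinear} gives $b(\mathbf{w};\boldsymbol{\varphi},\boldsymbol{\psi})=-b(\mathbf{w};\boldsymbol{\psi},\boldsymbol{\varphi})$. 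Hence it is bounded by $\|\mathbf{w}\|_{\mathbf{L}^\infty}\|\nabla\boldsymbol{\psi}\|_{\mathbf{L}^2}\|\boldsymbol{\varphi}\|_{\mathbf{L}^2}$, and $\|\mathbf{w}\|_{\mathbf{L}^\infty}\lesssim\|\nabla\mathbf{w}\|_{\mathbf{L}^{\mathsf{p}}}$ by Sobolev embedding and Poincaré, since $\mathsf{p}>n$ and $\mathbf{w}$ vanishes on $\partial\Omega$. For the second term, apply H\"older with exponents $2,\mathsf{p},r$, where $1/r=1/2-1/\mathsf{p}$:
\[
|b(\boldsymbol{\varphi};\mathbf{w},\boldsymbol{\psi})|\le\|\boldsymbol{\varphi}\|_{\mathbf{L}^2}\|\nabla\mathbf{w}\|_{\mathbf{L}^{\mathsf{p}}}\|\boldsymbol{\psi}\|_{\mathbf{L}^{r}},
\]
and $\|\boldsymbol{\psi}\|_{\mathbf{L}^r}\lesssim\|\boldsymbol{\psi}\|_{\mathbf{H}^2}$ because $\mathbf{H}^2(\Omega)\hookrightarrow\mathbf{L}^\infty(\Omega)$. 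Combining with Step 2 and dividing by $\|\mathbf{e}\|_{\mathbf{L}^2}$ gives the claim.

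The main obstacle is Step 2: obtaining the $\mathbf{H}^2$ regularity, with a controlled constant, for the adjoint linearized problem. The rest is H\"older bookkeeping. The delicate part is making sure that the adjoint well-posedness follows from regularity of $\mathbf{y}_\theta$ in the sense of Definition~\ref{def:reg_sol}, and that the lower-order terms are genuinely $\mathbf{L}^2$, so that the convex-domain Stokes estimate applies. Also, this argument uses only the regularity of $\mathbf{y}_\theta$; the hypothesis on $\mathbf{y}$ is needed only to define $\boldsymbol{\varphi}$.
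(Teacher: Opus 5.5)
Your argument is correct, but it takes a detour compared with the paper. The paper starts from the same error equation as your Step 1. It then uses only the well-posedness of the problem linearized at $\mathbf{y}_{\theta}$ (the regularity of $\mathbf{y}_{\theta}$).

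\begin{itemize}
\item It bounds the $\mathbf{H}^{-1}(\Omega)$ norm of the right-hand side $b(\mathbf{y}-\mathbf{y}_{\theta};\boldsymbol{\varphi},\cdot)+b(\boldsymbol{\varphi};\mathbf{y}-\mathbf{y}_{\theta},\cdot)$ using exactly your two H\"older estimates. These are skew-symmetry plus $\mathbf{W}_0^{1,\mathsf{p}}(\Omega)\hookrightarrow\mathbf{L}^{\infty}(\Omega)$ for the first term, and $\mathbf{H}_0^1(\Omega)\hookrightarrow\mathbf{L}^{r}(\Omega)$ with $1/r=1/2-1/\mathsf{p}$ for the second. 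The difference is that they are applied to a generic test function $\mathbf{v}$ rather than to $\boldsymbol{\psi}$.
\item This gives the stronger energy bound $\|\nabla(\boldsymbol{\varphi}_{\theta}-\boldsymbol{\varphi})\|_{\mathbf{L}^{2}(\Omega)}\lesssim\|\nabla(\mathbf{y}-\mathbf{y}_{\theta})\|_{\mathbf{L}^{\mathsf{p}}(\Omega)}\|\boldsymbol{\varphi}\|_{\mathbf{L}^{2}(\Omega)}$.
\item The $\mathbf{L}^2$ estimate then follows from Poincar\'e's inequality.
\end{itemize}

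The reason this works is that, after skew-symmetry, no derivative falls on $\boldsymbol{\varphi}$. So the data are already controlled in $\mathbf{H}^{-1}(\Omega)$ by $\|\boldsymbol{\varphi}\|_{\mathbf{L}^{2}(\Omega)}$, and no duality argument is needed.

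Your route needs two extra ingredients.
\begin{itemize}
\item Well-posedness of the adjoint linearized problem. Your transposition argument is fine, and your dual problem is exactly \eqref{eq:adj_eq_var_form} with $\mathbf{y}=\mathbf{y}_{\theta}$ and $\mathbf{h}=\mathbf{e}$, whose well-posedness and $\mathbf{H}^2$ bound the paper records in its preliminaries.
\item As written, $\mathbf{H}^{2}$ regularity on the convex domain. What you flag as the main obstacle is actually dispensable even within your argument. Since $r=2\mathsf{p}/(\mathsf{p}-2)<6$ for $n=3$ (and $r<\infty$ for $n=2$), the estimate $\|\boldsymbol{\psi}\|_{\mathbf{L}^{r}(\Omega)}\lesssim\|\nabla\boldsymbol{\psi}\|_{\mathbf{L}^{2}(\Omega)}\lesssim\|\mathbf{e}\|_{\mathbf{L}^{2}(\Omega)}$ already suffices.
\end{itemize}

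Both approaches have the same dependence of the hidden constant on the inverse of the linearization at $\mathbf{y}_{\theta}$.
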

\begin{proof}
Observe that $(\boldsymbol \varphi_{\theta} - \boldsymbol \varphi, \zeta_{\theta} - \zeta) \in \mathbf{H}_0^{1}(\Omega)\times L_0^{2}(\Omega)$ solves 
\begin{align*}
&\, \nu(\nabla (\boldsymbol \varphi_{\theta} - \boldsymbol \varphi), \nabla \mathbf{v})_{\mathbf{L}^2(\Omega)} + b(\mathbf{y}_{\theta};\boldsymbol \varphi_{\theta} - \boldsymbol \varphi,\mathbf{v}) + b(\boldsymbol \varphi_{\theta} - \boldsymbol \varphi;\mathbf{y}_{\theta},\mathbf{v}) - (\zeta_{\theta} - \zeta, \textnormal{div }\mathbf{v})_{\Omega}\\
&\, = b(\mathbf{y} - \mathbf{y}_{\theta}; \boldsymbol \varphi, \mathbf{v}) + b(\boldsymbol \varphi; \mathbf{y} - \mathbf{y}_{\theta},\mathbf{v}), \qquad
(q,\textnormal{div}(\boldsymbol \varphi_{\theta} - \boldsymbol \varphi))_{\Omega} = 0, 
\end{align*}
for all $(\mathbf{v},q)\in \mathbf{H}_0^{1}(\Omega)\times L_0^{2}(\Omega)$.
Since the form $b$ is well defined on $[\mathbf{H}_0^1(\Omega)]^3$, the right-hand side of this problem belongs to $\mathbf{H}^{-1}(\Omega)$.
Thus, since $\mathbf{y}_{\theta}$ is regular, we find that this problem is well posed and it holds that
\begin{align*}
    \|\nabla (\boldsymbol{\varphi}_{\theta} - \boldsymbol{\varphi})\|_{\mathbf{L}^{2}(\Omega)}
    \lesssim
    \| b(\mathbf{y} - \mathbf{y}_{\theta};\boldsymbol \varphi,\cdot) \|_{\mathbf{H}^{-1}(\Omega)} + \| b(\boldsymbol \varphi;\mathbf{y} - \mathbf{y}_{\theta},\cdot) \|_{\mathbf{H}^{-1}(\Omega)}.
\end{align*}
Now, using \eqref{eq:properties_trilinear}, we have
\begin{align*}
\| b(\mathbf{y} - \mathbf{y}_{\theta};\boldsymbol \varphi,\cdot) \|_{\mathbf{H}^{-1}(\Omega)} 
& \leq 
\sup_{\mathbf{v} \in \mathbf{H}_0^{1}(\Omega) } 
\frac{ \|\mathbf{y} - \mathbf{y}_{\theta}\|_{\mathbf{L}^{\infty}(\Omega)} \|  \nabla \mathbf{v} \|_{\mathbf{L}^{2}(\Omega)}\|\boldsymbol \varphi\|_{\mathbf{L}^{2}(\Omega)}}{\| \nabla \mathbf{v} \|_{\mathbf{L}^{2} (\Omega)}}\\
&\, =
\| \mathbf{y} - \mathbf{y}_{\theta} \|_{\mathbf{L}^{\infty}(\Omega)} \|\boldsymbol{\varphi}\|_{\mathbf{L}^{2}(\Omega)}.
\end{align*}
Moreover, using the embedding $\mathbf{H}_0^{1}(\Omega) \hookrightarrow \mathbf{L}^{\mathsf{s}}(\Omega)$ with $\mathsf{s} < \infty$ when $n=2$ and $\mathsf{s} \leq 6$ when $n=3$, we get $\| b(\boldsymbol \varphi;\mathbf{y} - \mathbf{y}_{\theta},\cdot) \|_{\mathbf{H}^{-1}(\Omega)}
\lesssim \|\boldsymbol{\varphi} \|_{\mathbf{L}^{2}(\Omega)} \| \nabla  (\mathbf{y} - \mathbf{y}_{\theta})\|_{\mathbf{L}^{\mathsf{p}}(\Omega)}$ ($\mathsf{p} > n$).
Therefore, we have that 
\[
    \|\nabla (\boldsymbol{\varphi}_{\theta} - \boldsymbol{\varphi})\|_{\mathbf{L}^{2}(\Omega)}
    \lesssim
    (\| \mathbf{y} - \mathbf{y}_{\theta} \|_{\mathbf{L}^{\infty}(\Omega)} + \| \nabla  (\mathbf{y} - \mathbf{y}_{\theta})\|_{\mathbf{L}^{\mathsf{p}}(\Omega)})\|\boldsymbol{\varphi} \|_{\mathbf{L}^{2}(\Omega)}.
\]
We obtain the desired bound by combining the last estimate with Poincaré’s inequality and with the embedding $\mathbf{W}_0^{1,\mathsf{p}}(\Omega) \hookrightarrow \mathbf{L}^{\infty}(\Omega)$ for $\mathsf{p}>n$ arbitrarily close to $n$.
\end{proof}

Let $(\mathbf{y},p)$ be a (weak) solution to \eqref{eq:state_equations} associated with some $\mathbf{u} \in \mathbf{U}_{ad}$.
Given $\mathbf{h}\in \mathbf{H}^{-1}(\Omega)$, we introduce the pair $(\mathbf{z},\pi)\in \mathbf{H}_0^{1}(\Omega)\times L_0^{2}(\Omega)$ as the solution to
\begin{equation}\label{eq:adj_eq}
-\nu \Delta \mathbf{z} - (\mathbf{y}\cdot \nabla) \mathbf{z} + (\nabla \mathbf{y})^T \mathbf{z} + \nabla\pi  = \mathbf{h} \text{ in } \Omega, \quad
        \text{div }\mathbf{z} = 0 \text{ in } \Omega, \quad \mathbf{z} = \mathbf{0} \text{ on } \partial\Omega.
\end{equation}
We consider the variational formulation: Find $(\mathbf{z},\pi)\in \mathbf{H}_0^{1}(\Omega)\times L_0^{2}(\Omega)$ such that
\begin{align}\label{eq:adj_eq_var_form}
\begin{split}
\nu(\nabla \mathbf{w}, \nabla \mathbf{z})_{\mathbf{L}^2(\Omega)} + b(\mathbf{y}_;\mathbf{w},\mathbf{z}) + b(\mathbf{w};\mathbf{y},\mathbf{z}) - (\pi,\text{div } \mathbf{w})_{\Omega} &\, = \langle \mathbf{h},\mathbf{w}\rangle \\
\qquad
(s,\text{div } \mathbf{z})_{\Omega} & \,=   0
\end{split}
\end{align}
for all $(\mathbf{w},s)\in\mathbf{H}_0^1(\Omega)\times L_0^2(\Omega)$.

If we assume that $\mathbf{y}$ is regular, then problem \eqref{eq:adj_eq} (equivalently \eqref{eq:adj_eq_var_form}) is well posed; see Step 1 of the proof of Theorem 2.9 in \cite{MR3936891} for details. 
Moreover, since $\mathbf{y}\in \mathbf{H}^{2}(\Omega)$, the terms $(\mathbf{y}\cdot \nabla) \mathbf{z}$ and $(\nabla \mathbf{y})^T \mathbf{z}$ are bounded in $\mathbf{L}^{2}(\Omega)$:
\begin{align*}
    \|(\nabla \mathbf{y})^T \mathbf{z}\|_{\mathbf{L}^{2}(\Omega)} + \|(\mathbf{y}\cdot \nabla) \mathbf{z}\|_{\mathbf{L}^{2}(\Omega)}
    \lesssim &\, \|\nabla \mathbf{y}\|_{\mathbf{L}^{3}(\Omega)}\|\mathbf{z}\|_{\mathbf{L}^{6}(\Omega)} + \|\mathbf{y}\|_{\mathbf{L}^{\infty}(\Omega)}\|\nabla \mathbf{z}\|_{\mathbf{L}^{2}(\Omega)} \\
    \lesssim &\,\|\mathbf{y}\|_{\mathbf{H}^{2}(\Omega)}\|\nabla \mathbf{z}\|_{\mathbf{L}^{2}(\Omega)}.
\end{align*}
Hence, when $\mathbf{h}\in \mathbf{L}^{2}(\Omega)$, we can use regularity results for the Stokes equations to conclude that $(\mathbf{z},\pi)\in \mathbf{H}^{2}(\Omega)\times H^{1}(\Omega)$ and $\|\mathbf{z}\|_{\mathbf{H}^{2}(\Omega)} \lesssim \|\mathbf{h}\|_{\mathbf{L}^{2}(\Omega)}$.
Even more, given $\mathsf{p}>n$ arbitrarily close to $n$, we observe that
\begin{align*}
    \|(\nabla \mathbf{y})^T \mathbf{z}\|_{\mathbf{L}^{\mathsf{p}}(\Omega)} + \|(\mathbf{y}\cdot \nabla) \mathbf{z}\|_{\mathbf{L}^{\mathsf{p}}(\Omega)}
    \lesssim &\, \|\nabla \mathbf{y}\|_{\mathbf{L}^{\mathsf{p}}(\Omega)}\|\mathbf{z}\|_{\mathbf{L}^{\infty}(\Omega)} + \|\mathbf{y}\|_{\mathbf{L}^{\infty}(\Omega)}\|\nabla \mathbf{z}\|_{\mathbf{L}^{\mathsf{p}}(\Omega)} \\
    \lesssim &\,\|\nabla \mathbf{y}\|_{\mathbf{L}^{\mathsf{p}}(\Omega)}\|\nabla \mathbf{z}\|_{\mathbf{L}^{\mathsf{p}}(\Omega)} \lesssim \|\nabla \mathbf{z}\|_{\mathbf{L}^{\mathsf{p}}(\Omega)}.
\end{align*}
Consequently, if $\mathbf{h}\in \mathbf{L}^{\mathsf{p}}(\Omega)$, we can apply the result of \cite[Lemma 14]{MR3008832} (see also \cite[Section 1.3]{MR3422453}) to conclude that there exists $\alpha\in (0,1)$ (depending on the maximum edge-opening angle of $\Omega$) such that $(\mathbf{z},\pi)\in \mathbf{C}^{1,\alpha}(\bar{\Omega})\times C^{0,\alpha}(\bar{\Omega})$ and 
\begin{align}\label{eq:estimate_z_max_norm}
    \|\nabla \mathbf{z}\|_{\mathbf{L}^\infty(\Omega)} 
    \lesssim 
    \|\mathbf{h}\|_{\mathbf{L}^{\mathsf{p}}} + \|\nabla \mathbf{z}\|_{\mathbf{L}^{\mathsf{p}}(\Omega)}.
\end{align}
With this estimate at hand, we prove the following result.

\begin{lemma}[auxiliary property: adjoint state]\label{lemma:aux_prop_z1-z2}
Let $\mathbf{u},\mathbf{u}_{\theta}\in \mathbf{U}_{ad}$ and $(\mathbf{y},p)\in S(\mathbf{u})$ and $(\mathbf{y}_{\theta},p_{\theta})\in S(\mathbf{u}_{\theta})$.
Assume that both $\mathbf{y}$ and $\mathbf{y}_{\theta}$ are regular solutions in the sense of Definition~\ref{def:reg_sol}.
Let $\mathbf{h}, \mathbf{h}_{\theta} \in \mathbf{L}^{\mathsf{p}}(\Omega)$ with $\mathsf{p} > n$ arbitrarily close to $n$ and denote by $(\mathbf{z},\pi)$ the unique solution to \eqref{eq:adj_eq} associated with $\mathbf{y}$ and $\mathbf{h}$, and by $(\mathbf{z}_{\theta}, \pi_{\theta})$ the unique solution to \eqref{eq:adj_eq} associated with $\mathbf{y}_{\theta}$ and $\mathbf{h}_{\theta}$. 
Then, we have
\[
    \|\nabla(\mathbf{z}_{\theta} - \mathbf{z})\|_{\mathbf{L}^{\infty}(\Omega)} 
    \lesssim
    \|\mathbf{h}_{\theta} - \mathbf{h}\|_{\mathbf{L}^{\mathsf{p}}(\Omega)} + \|\nabla (\mathbf{y}_{\theta} - \mathbf{y})\|_{\mathbf{L}^{\mathsf{p}}(\Omega)}.
\]
\end{lemma}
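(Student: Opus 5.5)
We subtract the two adjoint problems and regard the difference as the solution of a linearized adjoint problem around $\mathbf{y}_{\theta}$. Set $\mathbf{e}:=\mathbf{z}_{\theta}-\mathbf{z}$ and $\rho:=\pi_{\theta}-\pi$. Writing the strong form \eqref{eq:adj_eq} for both pairs and subtracting, we obtain
\begin{align*}
-\nu\Delta\mathbf{e}-(\mathbf{y}_{\theta}\cdot\nabla)\mathbf{e}+(\nabla\mathbf{y}_{\theta})^{T}\mathbf{e}+\nabla\rho
= \mathbf{h}_{\theta}-\mathbf{h} + ((\mathbf{y}_{\theta}-\mathbf{y})\cdot\nabla)\mathbf{z} - (\nabla(\mathbf{y}_{\theta}-\mathbf{y}))^{T}\mathbf{z} =: \mathbf{r},
\end{align*}
together with $\textnormal{div }\mathbf{e}=0$ in $\Omega$ and $\mathbf{e}=\mathbf{0}$ on $\partial\Omega$. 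This is exactly problem \eqref{eq:adj_eq} with $\mathbf{y}$ replaced by $\mathbf{y}_{\theta}$ and right-hand side $\mathbf{r}$. Since $\mathbf{y}_{\theta}$ is regular, the problem is well posed, so $(\mathbf{e},\rho)$ is its unique solution.

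The first step is to show that $\mathbf{r}\in\mathbf{L}^{\mathsf{p}}(\Omega)$ and to bound it. By H\"older's inequality,
\[
\|\mathbf{r}\|_{\mathbf{L}^{\mathsf{p}}(\Omega)}\lesssim \|\mathbf{h}_{\theta}-\mathbf{h}\|_{\mathbf{L}^{\mathsf{p}}(\Omega)} + \|\mathbf{y}_{\theta}-\mathbf{y}\|_{\mathbf{L}^{\infty}(\Omega)}\|\nabla\mathbf{z}\|_{\mathbf{L}^{\mathsf{p}}(\Omega)} + \|\nabla(\mathbf{y}_{\theta}-\mathbf{y})\|_{\mathbf{L}^{\mathsf{p}}(\Omega)}\|\mathbf{z}\|_{\mathbf{L}^{\infty}(\Omega)}.
\]
The embedding $\mathbf{W}^{1,\mathsf{p}}_0(\Omega)\hookrightarrow\mathbf{L}^{\infty}(\Omega)$ for $\mathsf{p}>n$ bounds $\|\mathbf{y}_{\theta}-\mathbf{y}\|_{\mathbf{L}^{\infty}(\Omega)}$ by $\|\nabla(\mathbf{y}_{\theta}-\mathbf{y})\|_{\mathbf{L}^{\mathsf{p}}(\Omega)}$. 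It remains to bound $\mathbf{z}$ uniformly. By the $\mathbf{C}^{1,\alpha}$ regularity recalled before \eqref{eq:estimate_z_max_norm}, $\|\mathbf{z}\|_{\mathbf{W}^{1,\infty}(\Omega)}\lesssim \|\mathbf{h}\|_{\mathbf{L}^{\mathsf{p}}(\Omega)}$. Here one uses that $\|\nabla \mathbf{z}\|_{\mathbf{L}^{\mathsf{p}}}$ is controlled by $\|\mathbf{z}\|_{\mathbf{H}^2}\lesssim\|\mathbf{h}\|_{\mathbf{L}^2}$ for $\mathsf{p}\le 6$. Since $\mathbf{h}$ is fixed data, this factor enters the hidden constant, which yields $\|\mathbf{r}\|_{\mathbf{L}^{\mathsf{p}}(\Omega)}\lesssim\|\mathbf{h}_{\theta}-\mathbf{h}\|_{\mathbf{L}^{\mathsf{p}}(\Omega)}+\|\nabla(\mathbf{y}_{\theta}-\mathbf{y})\|_{\mathbf{L}^{\mathsf{p}}(\Omega)}$.

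Next, we apply \eqref{eq:estimate_z_max_norm} to $(\mathbf{e},\rho)$, now with $\mathbf{y}_{\theta}$ in place of $\mathbf{y}$. This gives $\|\nabla\mathbf{e}\|_{\mathbf{L}^{\infty}(\Omega)}\lesssim\|\mathbf{r}\|_{\mathbf{L}^{\mathsf{p}}(\Omega)}+\|\nabla\mathbf{e}\|_{\mathbf{L}^{\mathsf{p}}(\Omega)}$. To absorb the last term, we take $\mathsf{p}$ close to $n$, so that $\mathsf{p}\le 6$ and $\|\nabla\mathbf{e}\|_{\mathbf{L}^{\mathsf{p}}}\lesssim\|\mathbf{e}\|_{\mathbf{H}^2}$. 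The $\mathbf{H}^2$ regularity stated for \eqref{eq:adj_eq} gives $\|\mathbf{e}\|_{\mathbf{H}^{2}}\lesssim\|\mathbf{r}\|_{\mathbf{L}^2}\lesssim\|\mathbf{r}\|_{\mathbf{L}^{\mathsf{p}}}$ because $\Omega$ is bounded. Combining the two steps gives the claim.

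The main obstacle is the uniformity of the hidden constants. Both the constant in \eqref{eq:estimate_z_max_norm} and the $\mathbf{H}^2$ stability constant depend on $\mathbf{y}_{\theta}$, through $\|\mathbf{y}_{\theta}\|_{\mathbf{H}^2}$ and through the norm of the inverse of the linearized operator. I will argue that $\|\mathbf{y}_{\theta}\|_{\mathbf{H}^2}$ is bounded uniformly for $\mathbf{u}_{\theta}\in\mathbf{U}_{ad}$, since $\mathbf{U}_{ad}$ is bounded in $\mathbf{L}^\infty(\Omega)$ and Theorem~\ref{thm:well_posedness_navier_stokes} together with Stokes regularity applies. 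I will then treat the inverse of the linearized operator at $\mathbf{y}_{\theta}$ as a fixed constant, which follows from the regularity hypothesis. This is consistent with how constants depending on the solutions are absorbed in Proposition~\ref{prop:aux_linear_states}.
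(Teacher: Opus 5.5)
Your argument is correct and matches the paper's proof in structure: you write $\mathbf{z}_{\theta}-\mathbf{z}$ as the solution of the adjoint problem linearized at $\mathbf{y}_{\theta}$ with an $\mathbf{L}^{\mathsf{p}}(\Omega)$ right-hand side, apply \eqref{eq:estimate_z_max_norm}, and then bound the leftover term $\|\nabla(\mathbf{z}_{\theta}-\mathbf{z})\|_{\mathbf{L}^{\mathsf{p}}(\Omega)}$; constants depending on $\mathbf{y}_{\theta}$ and $\mathbf{z}$ are absorbed exactly as the paper does. The only difference is that final step: you use the $\mathbf{H}^{2}$ estimate with $\mathbf{H}^{2}(\Omega)\hookrightarrow\mathbf{W}^{1,\mathsf{p}}(\Omega)$ and $\|\mathbf{r}\|_{\mathbf{L}^{2}(\Omega)}\lesssim\|\mathbf{r}\|_{\mathbf{L}^{\mathsf{p}}(\Omega)}$, whereas the paper uses the $\mathbf{W}^{1,\mathsf{p}}$ Stokes estimate with $\mathbf{W}^{-1,\mathsf{p}}$ data plus the $\mathbf{H}^{1}$ stability bound, and both are valid here.
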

\begin{proof}
The pair $(\mathbf{z}_{\theta} - \mathbf{z},\pi_{\theta} - \pi)$ solves
    \begin{align}\label{eq:z_t-z-problem}
    \begin{split}
       &-\nu \Delta (\mathbf{z}_{\theta} - \mathbf{z}) + (\nabla \mathbf{y}_{\theta})^T (\mathbf{z}_{\theta} - \mathbf{z}) - (\mathbf{y}_{\theta}\cdot \nabla) (\mathbf{z}_{\theta} - \mathbf{z}) + \nabla(\pi - \pi_{\theta}) =  (\mathbf{h} - \mathbf{h}_{\theta}) \\
       & + (\nabla (\mathbf{y} - \mathbf{y}_{\theta}))^T \mathbf{z} - ((\mathbf{y} - \mathbf{y}_{\theta})\cdot \nabla) \mathbf{z} ~\text{ in } \Omega, 
       \quad
        \text{div}(\mathbf{z}_{\theta} - \mathbf{z}) = 0 ~\text{ in }\Omega, 
        \end{split}
\end{align}
with Dirichlet boundary condition $\mathbf{z}_{\theta} - \mathbf{z}  = \mathbf{0} \text{ on } \partial\Omega$. 
On the one hand, following the arguments that lead to \eqref{eq:estimate_z_max_norm} we find that 
\begin{align}\label{eq:estimate_max_z-z_t_I}
    \|\nabla(\mathbf{z}_{\theta} - \mathbf{z})\|_{\mathbf{L}^{\infty}(\Omega)} 
    \lesssim
    \|\mathbf{h}_{\theta} - \mathbf{h}\|_{\mathbf{L}^{\mathsf{p}}(\Omega)} + \|\nabla (\mathbf{y}_{\theta} - \mathbf{y})\|_{\mathbf{L}^{\mathsf{p}}(\Omega)} + \|\nabla(\mathbf{z}_{\theta} - \mathbf{z})\|_{\mathbf{L}^{\mathsf{p}}(\Omega)},
\end{align}
upon using that $\|\nabla \mathbf{y}_{\theta}\|_{\mathbf{L}^{\mathsf{p}}(\Omega)},\|\nabla \mathbf{z}\|_{\mathbf{L}^{\mathsf{p}}(\Omega)} \leq C$ ($C>0$).
The use of regularity results for the Stokes equations \cite[Theorem 2.9]{Brown_Shen} (see also \cite[eq. (1.52)]{MR2987056}) yields
\begin{align*}
    &\|\nabla(\mathbf{z}_{\theta} - \mathbf{z})\|_{\mathbf{L}^{\mathsf{p}}(\Omega)} 
    \lesssim 
    \|(\mathbf{y}_{\theta}\cdot \nabla) (\mathbf{z}_{\theta} - \mathbf{z}) -  (\nabla \mathbf{y}_{\theta})^T (\mathbf{z}_{\theta} - \mathbf{z})\|_{\mathbf{W}^{-1,\mathsf{p}}(\Omega)} \\
    &\, \qquad \qquad \qquad \qquad \, + \|(\mathbf{h} - \mathbf{h}_{\theta}) + (\nabla (\mathbf{y} - \mathbf{y}_{\theta}))^T \mathbf{z} - ((\mathbf{y} - \mathbf{y}_{\theta})\cdot \nabla)\mathbf{z}\|_{\mathbf{W}^{-1,\mathsf{p}}(\Omega)}\\
   & \lesssim 
   \|\nabla \mathbf{y}_{\theta}\|_{\mathbf{L}^{\mathsf{p}}(\Omega)} \|\nabla (\mathbf{z}_{\theta} - \mathbf{z})\|_{\mathbf{L}^{2}(\Omega)} + \|\mathbf{h}_{\theta} - \mathbf{h}\|_{\mathbf{L}^{\mathsf{p}}(\Omega)} + \|\nabla (\mathbf{y}_{\theta} - \mathbf{y})\|_{\mathbf{L}^{\mathsf{p}}(\Omega)}.
\end{align*}
This, in combination with the stability estimate $\|\nabla (\mathbf{z}_{\theta} - \mathbf{z})\|_{\mathbf{L}^{2}(\Omega)}\lesssim \|(\mathbf{h} - \mathbf{h}_{\theta}) + (\nabla (\mathbf{y} - \mathbf{y}_{\theta}))^T \mathbf{z} - ((\mathbf{y} - \mathbf{y}_{\theta})\cdot \nabla)\mathbf{z}\|_{\mathbf{H}^{-1}(\Omega)}$, which follows from the well posedness of problem \eqref{eq:z_t-z-problem}, allows us to obtain $\|\nabla(\mathbf{z}_{\theta} - \mathbf{z})\|_{\mathbf{L}^{\mathsf{p}}(\Omega)}  \lesssim \|\mathbf{h}_{\theta} - \mathbf{h}\|_{\mathbf{L}^{\mathsf{p}}(\Omega)} + \|\nabla (\mathbf{y}_{\theta} - \mathbf{y})\|_{\mathbf{L}^{\mathsf{p}}(\Omega)}$.
Using the latter in \eqref{eq:estimate_max_z-z_t_I} concludes the proof.
\end{proof}

\begin{proposition}[auxiliary estimate: linearized state II]\label{prop:aux_linear_states_rhsL1}
Let $\mathbf{u}\in \mathbf{U}_{ad}$ and $(\mathbf{y},p)\in S(\mathbf{u})$.
Assume that $\mathbf{y}$ is a regular solution in the sense of Definition~\ref{def:reg_sol}.
Let $\mathbf{g}\in \mathbf{L}^{1}(\Omega)\cap \mathbf{H}^{-1}(\Omega)$ and denote by $(\boldsymbol{\varphi},\zeta)$ the unique solution of the linearized problem \eqref{eq:first_deriv_S*}.
Then, the following estimate holds:
\[
    \|\boldsymbol{\varphi}\|_{\mathbf{L}^{2}(\Omega)} 
    \lesssim
    \|\mathbf{g}\|_{\mathbf{L}^{1}(\Omega)}.
\]
\end{proposition}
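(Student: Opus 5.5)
We argue by duality: we test the linearized problem \eqref{eq:first_deriv_S*} against the solution of the adjoint problem \eqref{eq:adj_eq} with right-hand side $\boldsymbol{\varphi}$ itself, and then use the $\mathbf{L}^\infty$ bound \eqref{eq:estimate_z_max_norm} on that adjoint solution. The $\mathbf{L}^1$ norm of $\mathbf{g}$ then appears naturally, paired against a function that is bounded in $\mathbf{L}^\infty$.

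\emph{Step 1 (dual problem).} Since $\boldsymbol{\varphi}\in\mathbf{H}_0^1(\Omega)\hookrightarrow\mathbf{L}^2(\Omega)$, let $(\mathbf{z},\pi)\in\mathbf{H}_0^1(\Omega)\times L_0^2(\Omega)$ solve \eqref{eq:adj_eq_var_form} with $\mathbf{h}=\boldsymbol{\varphi}$. This problem is well posed because $\mathbf{y}$ is regular. Take $\mathbf{w}=\boldsymbol{\varphi}$ in \eqref{eq:adj_eq_var_form} and $(\mathbf{v},q)=(\mathbf{z},\pi)$ in \eqref{eq:first_deriv_S*}. The bilinear and trilinear terms coincide in the two identities. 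The pressure terms vanish because $\operatorname{div}\boldsymbol{\varphi}=0$ and $\operatorname{div}\mathbf{z}=0$. Since $\mathbf{g}\in\mathbf{L}^1(\Omega)\cap\mathbf{H}^{-1}(\Omega)$ and $\mathbf{z}$ will turn out to be continuous, the pairing $\langle\mathbf{g},\mathbf{z}\rangle$ equals $\int_\Omega\mathbf{g}\cdot\mathbf{z}$. We therefore obtain
\[
\|\boldsymbol{\varphi}\|_{\mathbf{L}^2(\Omega)}^2=\langle \mathbf{g},\mathbf{z}\rangle\le \|\mathbf{g}\|_{\mathbf{L}^1(\Omega)}\|\mathbf{z}\|_{\mathbf{L}^\infty(\Omega)}.
\]
This identification of the pairing with an integral is justified by density: approximate $\mathbf{g}$ by smooth functions, which converge in both norms.

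\emph{Step 2 (bound $\|\mathbf{z}\|_{\mathbf{L}^\infty}$ by $\|\boldsymbol{\varphi}\|_{\mathbf{L}^2}$).} The regularity argument stated before Lemma~\ref{lemma:aux_prop_z1-z2} applies with $\mathbf{h}=\boldsymbol{\varphi}\in\mathbf{L}^2(\Omega)$ and gives $\mathbf{z}\in\mathbf{H}^2(\Omega)$ with $\|\mathbf{z}\|_{\mathbf{H}^2(\Omega)}\lesssim\|\boldsymbol{\varphi}\|_{\mathbf{L}^2(\Omega)}$. For $n\le 3$ we have the embedding $\mathbf{H}^2(\Omega)\hookrightarrow\mathbf{L}^\infty(\Omega)$ (indeed into $\mathbf{C}(\bar\Omega)$), so $\|\mathbf{z}\|_{\mathbf{L}^\infty(\Omega)}\lesssim\|\boldsymbol{\varphi}\|_{\mathbf{L}^2(\Omega)}$. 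Inserting this into Step 1 and dividing by $\|\boldsymbol{\varphi}\|_{\mathbf{L}^2(\Omega)}$ (the case $\boldsymbol{\varphi}=0$ being trivial) yields the claim.

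\emph{Main obstacle.} The delicate point is the $\mathbf{H}^2$ estimate for the adjoint problem with a constant depending only on $\mathbf{y}$. To obtain it, treat $(\mathbf{y}\cdot\nabla)\mathbf{z}$ and $(\nabla\mathbf{y})^T\mathbf{z}$ as right-hand sides of a Stokes problem. Their $\mathbf{L}^2$ norms are bounded by $\|\mathbf{y}\|_{\mathbf{H}^2}\|\nabla\mathbf{z}\|_{\mathbf{L}^2}$, and $\|\nabla\mathbf{z}\|_{\mathbf{L}^2}\lesssim\|\boldsymbol{\varphi}\|_{\mathbf{H}^{-1}}\lesssim\|\boldsymbol{\varphi}\|_{\mathbf{L}^2}$ by the well-posedness that comes from regularity of $\mathbf{y}$. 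Stokes $\mathbf{H}^2$-regularity on convex polytopes then gives $\|\mathbf{z}\|_{\mathbf{H}^2}\lesssim\|\boldsymbol{\varphi}\|_{\mathbf{L}^2}$. This chain was sketched earlier in the paper, so the proof should be short.
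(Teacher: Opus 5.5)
Your proposal is correct and is essentially the paper's own proof: cross-testing \eqref{eq:first_deriv_S*} with the adjoint solution for $\mathbf{h}=\boldsymbol{\varphi}$ gives $\|\boldsymbol{\varphi}\|_{\mathbf{L}^{2}(\Omega)}^{2}=(\mathbf{g},\mathbf{z})_{\mathbf{L}^{2}(\Omega)}$, and the proof is finished by H\"older's inequality, $\mathbf{H}^{2}(\Omega)\hookrightarrow\mathbf{C}(\bar{\Omega})$, and $\|\mathbf{z}\|_{\mathbf{H}^{2}(\Omega)}\lesssim\|\boldsymbol{\varphi}\|_{\mathbf{L}^{2}(\Omega)}$; note that it is this $\mathbf{H}^{2}$ bound, and not \eqref{eq:estimate_z_max_norm} as your opening paragraph suggests, that closes the argument, since the latter would give a bound in terms of $\|\boldsymbol{\varphi}\|_{\mathbf{L}^{\mathsf{p}}(\Omega)}$ instead. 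You identify $\langle\mathbf{g},\mathbf{z}\rangle$ with $\int_\Omega\mathbf{g}\cdot\mathbf{z}$, a step the paper takes silently, but the density of smooth functions in $\mathbf{L}^{1}(\Omega)\cap\mathbf{H}^{-1}(\Omega)$ that you invoke is not immediate (by Hahn--Banach it is equivalent to that very identity), so it is cleaner to approximate $\mathbf{z}\in\mathbf{H}_0^{1}(\Omega)\cap\mathbf{L}^{\infty}(\Omega)$ by uniformly bounded $\mathbf{C}_c^{\infty}(\Omega)$ functions converging in $\mathbf{H}_0^{1}(\Omega)$ and a.e.\ and then apply dominated convergence.
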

\begin{proof}
    Denote by $(\mathbf{z}_{\boldsymbol{\varphi}},\pi_{\boldsymbol{\varphi}})\in \mathbf{H}_0^{1}(\Omega)\times L_0^{2}(\Omega)$ the unique solution to \eqref{eq:adj_eq_var_form} with $\mathbf{h} = \boldsymbol{\varphi}$. Choose $(\mathbf{w},s) = (\boldsymbol{\varphi},0)$ in the problem that $(\mathbf{z}_{\boldsymbol{\varphi}},\pi_{\boldsymbol{\varphi}})$ solves and take $(\mathbf{v},q) = (\mathbf{z}_{\boldsymbol{\varphi}},0)$ in \eqref{eq:first_deriv_S*}. 
    Thus, we obtain $\|\boldsymbol{\varphi}\|_{\mathbf{L}^{2}(\Omega)}^{2} = (\mathbf{g},\mathbf{z}_{\boldsymbol{\varphi}})_{\mathbf{L}^{2}(\Omega)}$. 
    Using H\"older's inequality, the embedding $\mathbf{H}^{2}(\Omega)\hookrightarrow \mathbf{C}(\bar{\Omega})$, and the estimate $\|\mathbf{z}_{\boldsymbol{\varphi}}\|_{\mathbf{H}^{2}(\Omega)} \lesssim \|\boldsymbol{\varphi}\|_{\mathbf{L}^{2}(\Omega)}$ we get
\[
        \|\boldsymbol{\varphi}\|_{\mathbf{L}^{2}(\Omega)}^{2}
        \leq \|\mathbf{g}\|_{\mathbf{L}^{1}(\Omega)}\|\mathbf{z}_{\boldsymbol{\varphi}}\|_{\mathbf{L}^{\infty}(\Omega)}
        \lesssim
        \|\mathbf{g}\|_{\mathbf{L}^{1}(\Omega)}\|\mathbf{z}_{\boldsymbol{\varphi}}\|_{\mathbf{H}^{2}(\Omega)}
        \lesssim \|\mathbf{g}\|_{\mathbf{L}^{1}(\Omega)}\|\boldsymbol{\varphi}\|_{\mathbf{L}^{2}(\Omega)},
\]
    which concludes the proof.
\end{proof}


\section{The optimal control problem}\label{sec:ocp}
In this section, the existence of a global solution to the optimal control problem \eqref{eq:minimize_cost_func} is presented and first- and second-order necessary and sufficient optimality conditions are shown.
\begin{theorem}[existence of solutions]
\label{thm:exsol}
    There exists at least one solution of \eqref{eq:minimize_cost_func}.
\end{theorem}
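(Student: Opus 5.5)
We use the direct method of the calculus of variations, applied to the reformulated problem
\[
\inf\Big\{\tfrac12\|\mathbf{y}-\mathbf{y}_\Omega\|_{\mathbf{L}^2(\Omega)}^2 : \mathbf{u}\in\mathbf{U}_{ad},\ (\mathbf{y},p)\in S(\mathbf{u})\Big\}.
\]
This infimum over pairs equals $\inf_{\mathbf{u}\in\mathbf{U}_{ad}}\mathcal{J}(\mathbf{u})$. Since $S(\mathbf{u})\neq\emptyset$ for every $\mathbf{u}$ by Theorem~\ref{thm:well_posedness_navier_stokes}, this common value $j$ is finite and nonnegative. It therefore suffices to produce a pair $(\bar{\mathbf{u}},\bar{\mathbf{y}})$ with $\bar{\mathbf{u}}\in\mathbf{U}_{ad}$, $(\bar{\mathbf{y}},\bar p)\in S(\bar{\mathbf{u}})$ and cost $j$. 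Indeed, then $\mathcal{J}(\bar{\mathbf{u}})\le j\le \mathcal{J}(\bar{\mathbf{u}})$, so $\bar{\mathbf{u}}$ solves \eqref{eq:minimize_cost_func}; moreover the infimum in \eqref{def:cost_functional} is attained at $\bar{\mathbf{u}}$.

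First, take a minimizing sequence $(\mathbf{u}_k,\mathbf{y}_k,p_k)$ with $\mathbf{u}_k\in\mathbf{U}_{ad}$ and $(\mathbf{y}_k,p_k)\in S(\mathbf{u}_k)$, chosen so that the costs tend to $j$. Such a sequence exists by definition of the infimum: first pick $\mathbf{u}_k$ with $\mathcal{J}(\mathbf{u}_k)\to j$, then pick $\mathbf{y}_k$ nearly realizing $\mathcal{J}(\mathbf{u}_k)$.

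Next, extract limits. The set $\mathbf{U}_{ad}$ is bounded in $\mathbf{L}^\infty(\Omega)$, hence bounded in $\mathbf{L}^2(\Omega)$; it is also convex and closed. Therefore, up to a subsequence, $\mathbf{u}_k\rightharpoonup\bar{\mathbf{u}}$ weakly in $\mathbf{L}^2(\Omega)$ with $\bar{\mathbf{u}}\in\mathbf{U}_{ad}$. The stability estimate of Theorem~\ref{thm:well_posedness_navier_stokes} gives
\[
\|\nabla\mathbf{y}_k\|_{\mathbf{L}^2(\Omega)}+\|p_k\|_{\Omega}\lesssim\|\mathbf{u}_k\|_{\mathbf{H}^{-1}(\Omega)}\lesssim 1 .
\]
Hence, after a further subsequence, we have $\mathbf{y}_k\rightharpoonup\bar{\mathbf{y}}$ in $\mathbf{H}_0^1(\Omega)$ and $p_k\rightharpoonup\bar p$ in $L_0^2(\Omega)$. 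By Rellich, $\mathbf{y}_k\to\bar{\mathbf{y}}$ strongly in $\mathbf{L}^{\mathsf{s}}(\Omega)$ for every $\mathsf{s}<6$ (every $\mathsf{s}<\infty$ when $n=2$), in particular in $\mathbf{L}^2(\Omega)$ and $\mathbf{L}^4(\Omega)$. Since $\mathbf{V}(\Omega)$ is weakly closed, $\bar{\mathbf{y}}\in\mathbf{V}(\Omega)$.

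The main step is to pass to the limit in \eqref{eq:weak_ns_divergence_free}, that is, to show $(\bar{\mathbf{y}},\bar p)\in S(\bar{\mathbf{u}})$. The linear terms $\nu(\nabla\mathbf{y}_k,\nabla\mathbf{v})$, $(p_k,\mathrm{div}\,\mathbf{v})$ and $(\mathbf{u}_k,\mathbf{v})$ converge by weak convergence. For the trilinear term, fix a test function $\mathbf{v}\in\mathbf{C}_c^\infty(\Omega)^n$. Because $\mathbf{y}_k\in\mathbf{V}(\Omega)$, the antisymmetry \eqref{eq:properties_trilinear} gives $b(\mathbf{y}_k;\mathbf{y}_k,\mathbf{v})=-b(\mathbf{y}_k;\mathbf{v},\mathbf{y}_k)=-\int_\Omega (\mathbf{y}_k\cdot\nabla)\mathbf{v}\cdot\mathbf{y}_k$. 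Strong $\mathbf{L}^2$ convergence of $\mathbf{y}_k$ and $\nabla\mathbf{v}\in\mathbf{L}^\infty$ then give convergence to $-b(\bar{\mathbf{y}};\mathbf{v},\bar{\mathbf{y}})=b(\bar{\mathbf{y}};\bar{\mathbf{y}},\mathbf{v})$. Finally, density of smooth compactly supported fields in $\mathbf{H}_0^1(\Omega)$, together with the continuity of $b$ on $[\mathbf{H}_0^1(\Omega)]^3$, extends the limit identity to all $\mathbf{v}\in\mathbf{H}_0^1(\Omega)$.

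To conclude, the strong $\mathbf{L}^2$ convergence of $\mathbf{y}_k$ gives
\[
\tfrac12\|\bar{\mathbf{y}}-\mathbf{y}_\Omega\|^2_{\mathbf{L}^2(\Omega)}=\lim_{k\to\infty}\tfrac12\|\mathbf{y}_k-\mathbf{y}_\Omega\|^2_{\mathbf{L}^2(\Omega)}=j .
\]
Since $(\bar{\mathbf{y}},\bar p)\in S(\bar{\mathbf{u}})$, this yields $\mathcal{J}(\bar{\mathbf{u}})\le j$, and hence $\bar{\mathbf{u}}$ is a solution of \eqref{eq:minimize_cost_func}, as explained in the first paragraph.
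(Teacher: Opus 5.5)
Your proof is correct and follows essentially the same route as the paper: a minimizing sequence with nearly optimal states $(\mathbf{y}_k,p_k)\in S(\mathbf{u}_k)$, weak compactness of $\mathbf{U}_{ad}$ in $\mathbf{L}^2(\Omega)$ and of $(\mathbf{y}_k,p_k)$ in $\mathbf{H}_0^1(\Omega)\times L_0^2(\Omega)$, compactness to pass to the limit in the trilinear term, and convergence of the cost. Your antisymmetry-plus-density treatment of $b(\mathbf{y}_k;\mathbf{y}_k,\mathbf{v})$ spells out what the paper compresses into the compact embedding $\mathbf{H}_0^1(\Omega)\hookrightarrow\mathbf{L}^4(\Omega)$. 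Like the paper, you take the uniform bound from Theorem~\ref{thm:well_posedness_navier_stokes}, which is stated only for some solution; it in fact holds for every element of $S(\mathbf{u}_k)$, by testing \eqref{eq:weak_ns_divergence_free} with $\mathbf{v}=\mathbf{y}_k$ and using the inf-sup condition for the pressure.
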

\begin{proof}
The proof is standard; we include the details for completeness.

Let $\{\mathbf{u}_{k}\}_{k\in\mathbb{N}}$ be a minimizing sequence, i.e., $\mathcal{J}(\mathbf{u}_{k})\to \mathfrak{j} := \inf_{\mathbf{u}\in \mathbf{U}_{ad}}\mathcal{J}(\mathbf{u})$ as $k\to \infty$. 
Since $\mathbf{U}_{ad}$ is a nonempty, closed, convex and bounded subset of $\mathbf{L}^{2}(\Omega)$, there exists a subsequence (denoted in the same way) such that $\mathbf{u}_{k} \rightharpoonup \bar{\mathbf{u}}$ in $\mathbf{L}^{2}(\Omega)$ with $\bar{\mathbf{u}}\in \mathbf{U}_{ad}$.
Associated with each $\mathbf{u}_{k}$, we chose $(\mathbf{y}_{k},p_{k})\in S(\mathbf{u}_{k})$ such that 
$\frac{1}{2}\|\mathbf{y}_{k} - \mathbf{y}_\Omega\|_{\mathbf{L}^{2}(\Omega)}^{2} \leq \mathcal{J}(\mathbf{u}_{k}) + \frac{1}{k}$.
Theorem \ref{thm:well_posedness_navier_stokes} implies that, for every $k\in \mathbb{N}$, $\|\nabla \mathbf{y}_{k}\|_{\mathbf{L}^{2}(\Omega)} + \|p_{k}\|_{\Omega} \leq C$ ($C > 0$), and thus there exists a subsequence (still denoted in the same way) such that $(\mathbf{y}_{k},p_{k})\rightharpoonup (\bar{\mathbf{y}},\bar{p})$ in $\mathbf{H}_0^{1}(\Omega)\times L_0^{2}(\Omega)$.
The use of the embedding $\mathbf{H}_0^{1}(\Omega) \hookrightarrow \mathbf{L}^{4}(\Omega)$ yields $(\bar{\mathbf{y}},\bar{p})\in S(\bar{\mathbf{u}})$.
Using this and the embedding $\mathbf{H}_0^{1}(\Omega) \hookrightarrow \mathbf{L}^{2}(\Omega)$ we obtain
\[
    \mathcal{J}(\bar{\mathbf{u}}) \leq \frac{1}{2}\|\bar{\mathbf{y}} - \mathbf{y}_\Omega\|_{\mathbf{L}^{2}(\Omega)}^{2}
    = \lim_{k\to \infty} \frac{1}{2}\|\mathbf{y}_{k} - \mathbf{y}_\Omega\|_{\mathbf{L}^{2}(\Omega)}^{2}
    \leq \mathfrak{j},
\]
which concludes the proof.
\end{proof}

Since problem \eqref{eq:minimize_cost_func} is not convex, we also need to consider local solutions in the sense of $\mathbf{L}^1(\Omega)$ \cite[Definition 3.1]{MR2338434}, \cite[\S~4.4.2]{MR2583281}.

\begin{definition}[local solution]\label{def:local_sol}
We say that $\bar{\mathbf{u}}$ is a local minimum of \eqref{eq:minimize_cost_func} in the $\mathbf{L}^{1}(\Omega)$ sense, if there exists $\varepsilon > 0$ such that
\[
\mathcal{J}(\bar{\mathbf{u}}) \leq \mathcal{J}(\mathbf{u}) \quad \forall\,\mathbf{u}\in \mathbf{U}_{ad} \text{ with }\|\mathbf{u} - \bar{\mathbf{u}}\|_{\mathbf{L}^{1}(\Omega)} < \varepsilon.
\]
If the inequality is strict for every $\mathbf{u}\in \mathbf{U}_{ad}\setminus \{\bar{\mathbf{u}}\}$ such that $\|\mathbf{u} - \bar{\mathbf{u}}\|_{\mathbf{L}^{1}(\Omega)} < \varepsilon$, we say that $\bar{\mathbf{u}}$ is a \emph{strict} local minimum.
\end{definition}

Note that, since the admissible set $\mathbf{U}_{ad}$ is bounded in $\mathbf{L}^{\infty}(\Omega)$, if $\bar{\mathbf{u}}$ is a (strict) local minimum of problem \eqref{eq:minimize_cost_func} in the sense of $\mathbf{L}^{1}(\Omega)$, then $\bar{\mathbf{u}}$ is a (strict) local minimum of \eqref{eq:minimize_cost_func} in the sense of $\mathbf{L}^{t}(\Omega)$ with $1 \leq t \leq \infty$.

\begin{theorem}[differentiability]
\label{thm:properties_C_to_S}
Assume that $\bar{\mathbf{u}}$ is a local solution of \eqref{eq:minimize_cost_func} such that $\bar{\mathbf{y}}$, with $(\bar{\mathbf{y}},\bar{p})\in S(\bar{{\mathbf{u}}})$, is regular. Then there is an open neighborhood $\mathcal{O}(\bar{\mathbf{u}}) \subset \mathbf{L}^{2}(\Omega)$ of $\bar{\mathbf{u}}$, open neighborhoods $\mathcal{O}(\bar{\mathbf{y}}) \subset \mathbf{V}(\Omega)$ and $\mathcal{O}(\bar{p}) \subset L_0^2(\Omega)$ of $\bar{\mathbf{y}}$ and $\bar{p}$, respectively, and a $C^2$ mapping $\mathcal{S}: \mathcal{O}(\bar{\mathbf{u}}) \to \mathcal{O}(\bar{\mathbf{y}}) \times \mathcal{O}(\bar{p})$, such that $\mathcal{S}(\bar{\mathbf{u}}) = (\bar{\mathbf{y}},\bar{p})$. In addition, the neighborhood $\mathcal{O}(\bar{\mathbf{u}})$ can be taken convex and such that, for each $\mathbf{u}\in \mathcal{O}(\bar{\mathbf{u}})$, 

$\bullet$ pair $(\mathbf{y},p) = \mathcal{S}(\mathbf{u})$ uniquely solves \eqref{eq:weak_navier_stokes_eq} with $\mathbf{f}=\mathbf{u}$ in $\mathcal{O}(\bar{\mathbf{y}}) \times \mathcal{O}(\bar{p})$,

$\bullet$ $\mathcal{S}'(\mathbf{u}):\mathbf{H}^{-1}(\Omega)\to \mathbf{V}(\Omega)\times L_0^2(\Omega)$ is an isomorphism,

$\bullet$ if $\mathbf{g}\in \mathbf{H}^{-1}(\Omega)$, then $(\boldsymbol{\varphi},\zeta)=\mathcal{S}'(\mathbf{u})\mathbf{g}\in \mathbf{H}_0^1(\Omega)\times L_0^2(\Omega)$ corresponds to the unique solution to \eqref{eq:first_deriv_S*}.
\end{theorem}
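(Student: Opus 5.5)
The plan is to apply the implicit function theorem to the Navier--Stokes operator written in saddle-point form. Define
\[
\mathcal{F}: \mathbf{L}^{2}(\Omega)\times \mathbf{H}_0^1(\Omega)\times L_0^2(\Omega) \to \mathbf{H}^{-1}(\Omega)\times L_0^2(\Omega)
\]
by
\[
\mathcal{F}(\mathbf{u},\mathbf{y},p) := \bigl(-\nu\Delta\mathbf{y} + (\mathbf{y}\cdot\nabla)\mathbf{y} + \nabla p - \mathbf{u},\ \textnormal{div }\mathbf{y}\bigr).
\]
The first component is understood through the weak form \eqref{eq:weak_navier_stokes_eq}, i.e., as the functional $\mathbf{v}\mapsto \nu(\nabla\mathbf{y},\nabla\mathbf{v})_{\mathbf{L}^2(\Omega)}+b(\mathbf{y};\mathbf{y},\mathbf{v})-(p,\textnormal{div }\mathbf{v})_\Omega-(\mathbf{u},\mathbf{v})_{\mathbf{L}^2(\Omega)}$. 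The second component is identified with $L_0^2(\Omega)$ via $q\mapsto (q,\textnormal{div }\mathbf{y})_\Omega$, which is legitimate since $\textnormal{div }\mathbf{y}$ has zero mean for $\mathbf{y}\in\mathbf{H}_0^1(\Omega)$. By Remark~\ref{rmk:equivalent}, $\mathcal{F}(\mathbf{u},\mathbf{y},p)=0$ exactly when $(\mathbf{y},p)\in S(\mathbf{u})$.

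\textbf{Step 1: $\mathcal{F}$ is of class $C^2$ (indeed $C^\infty$).} The map $\mathcal{F}$ is the sum of continuous linear terms in $(\mathbf{u},\mathbf{y},p)$ and the quadratic term $\mathbf{y}\mapsto b(\mathbf{y};\mathbf{y},\cdot)$. The estimate $|b(\mathbf{v}_1;\mathbf{v}_2,\mathbf{v}_3)|\le \mathcal{C}_b\|\nabla\mathbf{v}_1\|_{\mathbf{L}^2(\Omega)}\|\nabla\mathbf{v}_2\|_{\mathbf{L}^2(\Omega)}\|\nabla\mathbf{v}_3\|_{\mathbf{L}^2(\Omega)}$ shows that this quadratic term is induced by a continuous bilinear map $\mathbf{H}_0^1(\Omega)\times\mathbf{H}_0^1(\Omega)\to\mathbf{H}^{-1}(\Omega)$, hence is smooth. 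The partial derivative with respect to $(\mathbf{y},p)$ at $(\bar{\mathbf{u}},\bar{\mathbf{y}},\bar{p})$, applied to $(\boldsymbol{\varphi},\zeta)$, is
\[
\bigl(\nu(\nabla\boldsymbol{\varphi},\nabla\cdot)_{\mathbf{L}^2(\Omega)}+b(\bar{\mathbf{y}};\boldsymbol{\varphi},\cdot)+b(\boldsymbol{\varphi};\bar{\mathbf{y}},\cdot)-(\zeta,\textnormal{div }\cdot)_\Omega,\ \textnormal{div }\boldsymbol{\varphi}\bigr),
\]
which is exactly the operator in \eqref{eq:first_deriv_S*}.

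\textbf{Step 2: the partial derivative is an isomorphism.} Definition~\ref{def:reg_sol} gives well-posedness of \eqref{eq:first_deriv_S*} for every $\mathbf{g}\in\mathbf{H}^{-1}(\Omega)$ with homogeneous divergence constraint. I must also handle an inhomogeneous divergence datum $r\in L_0^2(\Omega)$. For this, lift $r$ with the Bogovski\u{\i} operator: choose $\mathbf{w}\in\mathbf{H}_0^1(\Omega)$ with $\textnormal{div }\mathbf{w}=r$ and $\|\nabla\mathbf{w}\|_{\mathbf{L}^2(\Omega)}\lesssim\|r\|_\Omega$, subtract $\mathbf{w}$, and move the terms involving $\mathbf{w}$ to the right-hand side, where they belong to $\mathbf{H}^{-1}(\Omega)$. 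Regularity then gives bijectivity, and the open mapping theorem gives a bounded inverse. This is the main technical point, but I expect it to be routine; the only thing to check is that ``well posed'' includes continuous dependence, which follows from the open mapping theorem anyway.

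\textbf{Step 3: conclusion via the implicit function theorem.} The implicit function theorem yields neighborhoods $\mathcal{O}(\bar{\mathbf{u}})$ and $\mathcal{O}(\bar{\mathbf{y}})\times\mathcal{O}(\bar{p})$ and a $C^2$ map $\mathcal{S}$ with $\mathcal{S}(\bar{\mathbf{u}})=(\bar{\mathbf{y}},\bar{p})$ such that $(\mathbf{y},p)=\mathcal{S}(\mathbf{u})$ is the unique zero of $\mathcal{F}(\mathbf{u},\cdot,\cdot)$ in the latter neighborhood. Since $\textnormal{div }\mathbf{y}=0$, the image lies in $\mathbf{V}(\Omega)$, and I intersect $\mathcal{O}(\bar{\mathbf{y}})$ with $\mathbf{V}(\Omega)$. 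Shrinking $\mathcal{O}(\bar{\mathbf{u}})$ to a ball makes it convex.

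The isomorphism property of the partial derivative is an open condition, because invertible operators form an open set and the derivative depends continuously on $\mathbf{y}$. So, after possibly shrinking the neighborhoods, the partial derivative at $(\mathbf{u},\mathcal{S}(\mathbf{u}))$ remains an isomorphism, i.e., every $\mathbf{y}=\mathcal{S}(\mathbf{u})$ is regular. Differentiating $\mathcal{F}(\mathbf{u},\mathcal{S}(\mathbf{u}))=0$ then gives
\[
\mathcal{S}'(\mathbf{u})\mathbf{g} = \bigl[\partial_{(\mathbf{y},p)}\mathcal{F}\bigr]^{-1}(\mathbf{g},0).
\]
This formula defines $\mathcal{S}'(\mathbf{u})$ on all of $\mathbf{H}^{-1}(\Omega)$ by continuous extension, since $\mathbf{L}^2(\Omega)$ is dense in $\mathbf{H}^{-1}(\Omega)$. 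It shows that $\mathcal{S}'(\mathbf{u})\mathbf{g}$ is the unique solution of \eqref{eq:first_deriv_S*} with $\mathbf{y}$ in place of the reference state, and that it is an isomorphism onto $\mathbf{V}(\Omega)\times L_0^2(\Omega)$, consistent with the divergence-free constraint.
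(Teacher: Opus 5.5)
Your proposal is correct and takes the same route as the paper: the paper invokes the implicit function theorem via \cite[Theorem 2.10]{MR3936891} (see also \cite[Theorem 2.5]{MR2338434}) and omits the details. Your argument supplies exactly those omitted details: smoothness of the saddle-point map, invertibility of its partial derivative from the regularity of $\bar{\mathbf{y}}$ combined with a Bogovski\u{\i} lift, openness of invertibility to obtain regularity of every $\mathcal{S}(\mathbf{u})$, and the extension of $\mathcal{S}'(\mathbf{u})$ to $\mathbf{H}^{-1}(\Omega)$ (equivalently, applying the theorem with controls in $\mathbf{H}^{-1}(\Omega)$ and restricting to $\mathbf{L}^{2}(\Omega)$).
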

\begin{proof}
The proof follows from \cite[Theorem 2.10]{MR3936891}, which is based on the implicit function theorem; see also \cite[Theorem 2.5]{MR2338434}. For brevity, we omit the details.  
\end{proof}

Based on the previous result, we prove the following result.

\begin{proposition}[auxiliary estimate: state variable]\label{prop:states_y-yt}
Let $\mathbf{u},\mathbf{u}_{\theta}\in \mathbf{U}_{ad}\cap \mathcal{O}(\bar{\mathbf{u}})$ and $(\mathbf{y},p) = \mathcal{S}(\mathbf{u})$ and $(\mathbf{y}_{\theta},p_{\theta}) = \mathcal{S}(\mathbf{u}_{\theta})$.
Then, for every $\mathsf{r} \in (2n/(n+2),2]$, we have
\[
    \|\mathbf{y} - \mathbf{y}_{\theta}\|_{\mathbf{W}^{2,\mathsf{r}}(\Omega)} 
    \lesssim
    \|\mathbf{u} - \mathbf{u}_{\theta}\|_{\mathbf{L}^{\mathsf{r}}(\Omega)}.
\]
\end{proposition}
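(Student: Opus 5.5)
Set $\mathbf{e} := \mathbf{y} - \mathbf{y}_{\theta}$ and $e_p := p - p_{\theta}$. Subtracting the two Navier--Stokes systems and writing the convective difference as $(\mathbf{y}\cdot\nabla)\mathbf{y} - (\mathbf{y}_{\theta}\cdot\nabla)\mathbf{y}_{\theta} = (\mathbf{e}\cdot\nabla)\mathbf{y} + (\mathbf{y}_{\theta}\cdot\nabla)\mathbf{e}$, we see that $(\mathbf{e},e_p)$ solves the Stokes problem
\[
-\nu\Delta \mathbf{e} + \nabla e_p = \mathbf{u} - \mathbf{u}_{\theta} - (\mathbf{e}\cdot\nabla)\mathbf{y} - (\mathbf{y}_{\theta}\cdot\nabla)\mathbf{e}, \qquad \textnormal{div }\mathbf{e} = 0, \qquad \mathbf{e}|_{\partial\Omega} = \mathbf{0}.
\]
The plan has two steps. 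First, obtain a low-order bound $\|\nabla \mathbf{e}\|_{\mathbf{L}^2(\Omega)} \lesssim \|\mathbf{u}-\mathbf{u}_{\theta}\|_{\mathbf{L}^{\mathsf{r}}(\Omega)}$. Second, bootstrap through $\mathbf{W}^{2,\mathsf{r}}$-regularity for the Stokes problem on convex polytopes. For $\mathsf{r}\le 2$ this regularity holds on convex domains (cf.\ the $\mathbf{H}^2$ results cited after Remark~\ref{rmk:equivalent} together with \cite{MR2641539,MR2987056}).

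\emph{Energy-type bound.} Because $\mathcal{S}$ is $C^2$ on the convex neighborhood $\mathcal{O}(\bar{\mathbf{u}})$ (Theorem~\ref{thm:properties_C_to_S}), the mean value theorem gives
\[
\mathbf{e} = \int_0^1 \mathcal{S}'_{\mathbf{y}}\bigl(\mathbf{u}_{\theta}+t(\mathbf{u}-\mathbf{u}_{\theta})\bigr)(\mathbf{u}-\mathbf{u}_{\theta})\,dt .
\]
Each $\mathcal{S}'(\cdot)$ is an isomorphism $\mathbf{H}^{-1}(\Omega)\to \mathbf{V}(\Omega)\times L_0^2(\Omega)$, so $\|\nabla\mathbf{e}\|_{\mathbf{L}^2(\Omega)} \lesssim \|\mathbf{u}-\mathbf{u}_{\theta}\|_{\mathbf{H}^{-1}(\Omega)}$. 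Here I need the operator norms to be uniformly bounded along the segment. This should follow by shrinking $\mathcal{O}(\bar{\mathbf{u}})$ if necessary, using continuity of $\mathcal{S}'$ and the fact that the set of isomorphisms is open. The condition $\mathsf{r} > 2n/(n+2)$ is exactly what is needed for $\mathbf{L}^{\mathsf{r}}(\Omega)\hookrightarrow \mathbf{H}^{-1}(\Omega)$, by duality with $\mathbf{H}_0^1(\Omega)\hookrightarrow \mathbf{L}^{\mathsf{r}'}(\Omega)$. Hence $\|\nabla \mathbf{e}\|_{\mathbf{L}^2(\Omega)} \lesssim \|\mathbf{u}-\mathbf{u}_{\theta}\|_{\mathbf{L}^{\mathsf{r}}(\Omega)}$.

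\emph{Bootstrap.} By Stokes regularity,
\[
\|\mathbf{e}\|_{\mathbf{W}^{2,\mathsf{r}}(\Omega)} \lesssim \|\mathbf{u}-\mathbf{u}_{\theta}\|_{\mathbf{L}^{\mathsf{r}}(\Omega)} + \|(\mathbf{e}\cdot\nabla)\mathbf{y}\|_{\mathbf{L}^{\mathsf{r}}(\Omega)} + \|(\mathbf{y}_{\theta}\cdot\nabla)\mathbf{e}\|_{\mathbf{L}^{\mathsf{r}}(\Omega)} .
\]
Since $\mathsf{r}\le 2$, each convective term is bounded by its $\mathbf{L}^2$ norm. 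The states satisfy $\mathbf{y},\mathbf{y}_{\theta}\in\mathbf{H}^2(\Omega)$, uniformly for controls in $\mathbf{U}_{ad}$ (which is bounded in $\mathbf{L}^2(\Omega)$) by the $\mathbf{H}^2$-regularity stated after Remark~\ref{rmk:equivalent}. This gives
\[
\|(\mathbf{e}\cdot\nabla)\mathbf{y}\|_{\mathbf{L}^2(\Omega)} \le \|\mathbf{e}\|_{\mathbf{L}^6(\Omega)}\|\nabla \mathbf{y}\|_{\mathbf{L}^3(\Omega)} \lesssim \|\nabla\mathbf{e}\|_{\mathbf{L}^2(\Omega)}, \qquad
\|(\mathbf{y}_{\theta}\cdot\nabla)\mathbf{e}\|_{\mathbf{L}^2(\Omega)} \le \|\mathbf{y}_{\theta}\|_{\mathbf{L}^\infty(\Omega)}\|\nabla\mathbf{e}\|_{\mathbf{L}^2(\Omega)} .
\]
Combining these with the energy-type bound proves the proposition.

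\emph{Main obstacle.} I expect the hard part to be making the first step rigorous, namely the uniform invertibility of the linearized operators along the segment between $\mathbf{u}$ and $\mathbf{u}_{\theta}$. I would handle it by shrinking the neighborhood, or more directly by a perturbation argument. In the latter, $\mathbf{e}$ solves the linearized problem \eqref{eq:first_deriv_S*} at $\mathbf{y}$ with right-hand side $\mathbf{u}-\mathbf{u}_{\theta} + b(\mathbf{e};\mathbf{e},\cdot)$. The quadratic term is then absorbed when $\|\nabla \mathbf{e}\|_{\mathbf{L}^2(\Omega)}$ is small, which the neighborhood $\mathcal{O}(\bar{\mathbf{y}})$ guarantees.
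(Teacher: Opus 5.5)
Your proposal is correct and follows essentially the same route as the paper. You subtract the two systems, apply $\mathbf{W}^{2,\mathsf{r}}$-regularity for the Stokes problem with the convective terms bounded by $\|\nabla(\mathbf{y}-\mathbf{y}_{\theta})\|_{\mathbf{L}^2(\Omega)}$ times $\mathbf{H}^2$-norms of the states, and then control that energy term by the mean value theorem for $\mathcal{S}$ together with $\mathbf{L}^{\mathsf{r}}(\Omega)\hookrightarrow\mathbf{H}^{-1}(\Omega)$. You also make explicit that the norms of $\mathcal{S}'$ must be bounded uniformly on the neighborhood, either by shrinking $\mathcal{O}(\bar{\mathbf{u}})$ or by absorbing $b(\mathbf{e};\mathbf{e},\cdot)$; the paper leaves this implicit when it takes the supremum of $\|\mathcal{S}'\|$ along the segment.
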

\begin{proof}
The pair $(\mathbf{y} - \mathbf{y}_{\theta}, p - p_{\theta}) \in \mathbf{H}_0^{1}(\Omega)\times L_0^{2}(\Omega)$ is the unique solution to:
\begin{align*}
 - \nu\Delta (\mathbf{y} - \mathbf{y}_{\theta}) + \nabla(p - p_{\theta}) &\, = \mathbf{u} - \mathbf{u}_{\theta} - (\mathbf{y}_{\theta}\cdot \nabla) (\mathbf{y} - \mathbf{y}_{\theta}) - ((\mathbf{y} - \mathbf{y}_{\theta})\cdot \nabla) \mathbf{y} \text{ in }\Omega,\\
\textnormal{div}(\mathbf{y} - \mathbf{y}_{\theta}) &\,  = 0 \text{ in }\Omega, \qquad \mathbf{y} - \mathbf{y}_{\theta} = \mathbf{0} \text{ on }\partial\Omega.
\end{align*}
Let us prove that the right-hand side of the momentum equation belongs to $\mathbf{L}^{\mathsf{r}}(\Omega)$ with $\mathsf{r}\in (2n/(n+2),2]$.
It is clear that $\mathbf{u} - \mathbf{u}_{\theta} \in \mathbf{L}^{\mathsf{r}}(\Omega)$.
Moreover, using that $\mathbf{y},\mathbf{y}_{\theta}\in \mathbf{H}^{2}(\Omega)$ and the Sobolev embedding $\mathbf{H}_0^{1}(\Omega)\hookrightarrow \mathbf{L}^{6}(\Omega)$, we get
\begin{align*}
\|(\mathbf{y}_{\theta}\cdot \nabla) (\mathbf{y} - \mathbf{y}_{\theta})\|_{\mathbf{L}^{\mathsf{r}}(\Omega)} &\, + \|((\mathbf{y} - \mathbf{y}_{\theta})\cdot \nabla) \mathbf{y}\|_{\mathbf{L}^{\mathsf{r}}(\Omega)}
\\
\lesssim &\, 
\|\mathbf{y}_{\theta}\|_{\mathbf{L}^{\frac{2\mathsf{r}}{2-\mathsf{r}}}(\Omega)}\|\nabla(\mathbf{y} - \mathbf{y}_{\theta})\|_{\mathbf{L}^{2}(\Omega)} + \|\mathbf{y} - \mathbf{y}_{\theta}\|_{\mathbf{L}^{6}(\Omega)}\|\nabla\mathbf{y}\|_{\mathbf{L}^{\frac{6\mathsf{r}}{6-\mathsf{r}}}(\Omega)}\\
\lesssim &\, 
\|\nabla(\mathbf{y} - \mathbf{y}_{\theta})\|_{\mathbf{L}^{2}(\Omega)}\left(\|\mathbf{y}_{\theta}\|_{\mathbf{H}^{2}(\Omega)} + \|\mathbf{y}\|_{\mathbf{H}^{2}(\Omega)}\right).  
\end{align*}
Hence, using \cite[Corollary 1.8]{MR2987056}, we have $\|\mathbf{y} - \mathbf{y}_{\theta}\|_{\mathbf{W}^{2,\mathsf{r}}(\Omega)}
    \lesssim
    \|\mathbf{u} - \mathbf{u}_{\theta}\|_{\mathbf{L}^{\mathsf{r}}(\Omega)} + \|\nabla(\mathbf{y} - \mathbf{y}_{\theta})\|_{\mathbf{L}^{2}(\Omega)}$.
To estimate $\|\nabla(\mathbf{y} - \mathbf{y}_{\theta})\|_{\mathbf{L}^{2}(\Omega)}$, we use that $\mathbf{y}$ and $\mathbf{y}_{\theta}$ are regular solutions to apply a mean value theorem for operators \cite[Proposition 5.3.11]{MR2511061}. 
This yields
\[
    \|\nabla(\mathbf{y} - \mathbf{y}_{\theta})\|_{\mathbf{L}^{2}(\Omega)}
    \leq \sup_{\mathfrak{t} \in [0,1]} \| \mathcal{S}'( (1-\mathfrak{t}) \mathbf{u}_{\theta}  + \mathfrak{t} \mathbf{u} )\|  
    \|\mathbf{u} - \mathbf{u}_{\theta}\|_{\mathbf{H}^{-1}(\Omega)},
\]
which, in view of the estimate $\|\mathbf{u} - \mathbf{u}_{\theta}\|_{\mathbf{H}^{-1}(\Omega)}\lesssim \|\mathbf{u} - \mathbf{u}_{\theta}\|_{\mathbf{L}^{\mathsf{r}}(\Omega)}$, finishes the proof. 
\end{proof}

By Theorem \ref{thm:properties_C_to_S}, for every $\mathbf{u}\in \mathcal{O}(\bar{\mathbf{u}})$, the set $S(\mathbf{u})\cap \mathcal{O}(\bar{\mathbf{y}})\times \mathcal{O}(\bar{p})$ consists exclusively of the pair $(\mathbf{y}_{\mathbf{u}},p_{\mathbf{u}})$.
Note that this local uniqueness does not exclude the existence of other weak solutions outside this neighborhood.

From now on, we assume that $\bar{\mathbf{u}}$ is a local minimum of \eqref{eq:minimize_cost_func} such that $\bar{\mathbf{y}}$ is regular. 
Solutions satisfying this condition will be referred to as \emph{local nonsingular solutions}. 
For such a local solution, we can apply Theorem \ref{thm:properties_C_to_S} and define the control problem
\[
\text{inf}\{ J(\mathbf{u}) : ~ \mathbf{u}\in \mathbf{U}_{ad}\cap \mathcal{O}(\bar{\mathbf{u}})\},
\]
where $J: \mathbf{U}_{ad} \cap \mathcal{O}(\bar{\mathbf{u}}) \to \mathbb{R}$ is given by $J(\mathbf{u}) := \mathcal{J}(\mathbf{u})\rvert_{ \mathbf{U}_{ad}\cap \mathcal{O}(\bar{\mathbf{u}})} = \frac{1}{2}\|\mathbf{y}_{\mathbf{u}}-\mathbf{y}_\Omega\|_{\mathbf{L}^{2}(\Omega)}^2$.
Then $\bar{\mathbf{u}}$ is a local solution to the previous problem.

The following result is a direct consequence of Theorem \ref{thm:properties_C_to_S}.
\begin{theorem}
    The function $J: \mathbf{U}_{ad} \cap \mathcal{O}(\bar{\mathbf{u}}) \to \mathbb{R}$ is of class $C^2$ and we have the following identities for the first and second variation:
    \begin{align}\label{eq:charact_J'}
       &J'(\mathbf{u})\mathbf{g}= (\mathbf{z}_{\mathbf{u}},\mathbf{g})_{\mathbf{L}^{2}(\Omega)},\\
    &J''(\mathbf{u})\mathbf{g}^2 =  \|\boldsymbol\varphi_{\mathbf{g}}\|_{\mathbf{L}^{2}(\Omega)}^2 -2 b(\boldsymbol\varphi_{\mathbf{g}};\boldsymbol\varphi_{\mathbf{g}},\mathbf{z}_{\mathbf{u}}). \label{eq:charact_J''}
    \end{align}
    Here, $(\boldsymbol\varphi_{\mathbf{g}},\zeta_{\mathbf{g}})$ and $(\mathbf{z}_{\mathbf{u}},\pi_{\mathbf{u}})$ solve \eqref{eq:first_deriv_S*} and \eqref{eq:adj_eq_var_form}, respectively.
\end{theorem}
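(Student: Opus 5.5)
The plan is to write $J=G\circ\mathcal{S}_1$, where $\mathcal{S}_1(\mathbf{u})=\mathbf{y}_{\mathbf{u}}$ is the velocity component of the $C^2$ map $\mathcal{S}$ from Theorem~\ref{thm:properties_C_to_S}, and $G(\mathbf{y})=\frac12\|\mathbf{y}-\mathbf{y}_\Omega\|_{\mathbf{L}^2(\Omega)}^2$. Since $G$ is a continuous quadratic functional on $\mathbf{V}(\Omega)\hookrightarrow\mathbf{L}^2(\Omega)$, it is $C^\infty$. The chain rule then gives that $J$ is $C^2$ on $\mathbf{U}_{ad}\cap\mathcal{O}(\bar{\mathbf{u}})$, with
\[
J'(\mathbf{u})\mathbf{g}=(\mathbf{y}_{\mathbf{u}}-\mathbf{y}_\Omega,\boldsymbol\varphi_{\mathbf{g}})_{\mathbf{L}^2(\Omega)},\qquad
J''(\mathbf{u})\mathbf{g}^2=\|\boldsymbol\varphi_{\mathbf{g}}\|_{\mathbf{L}^2(\Omega)}^2+(\mathbf{y}_{\mathbf{u}}-\mathbf{y}_\Omega,\boldsymbol\psi_{\mathbf{g}})_{\mathbf{L}^2(\Omega)}.
\]
Here $\boldsymbol\varphi_{\mathbf{g}}$ is the velocity component of $\mathcal{S}'(\mathbf{u})\mathbf{g}$, and $\boldsymbol\psi_{\mathbf{g}}$ is the velocity component of $\mathcal{S}''(\mathbf{u})\mathbf{g}^2$.

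To identify the derivatives, I would differentiate the identity $\nu(\nabla\mathbf{y}_{\mathbf{u}},\nabla\mathbf{v})+b(\mathbf{y}_{\mathbf{u}};\mathbf{y}_{\mathbf{u}},\mathbf{v})-(p_{\mathbf{u}},\textnormal{div }\mathbf{v})=(\mathbf{u},\mathbf{v})$ with respect to $\mathbf{u}$. One differentiation recovers \eqref{eq:first_deriv_S*} for $\boldsymbol\varphi_{\mathbf{g}}$, which Theorem~\ref{thm:properties_C_to_S} already states. A second differentiation in direction $\mathbf{g}$ shows that $(\boldsymbol\psi_{\mathbf{g}},\zeta'')$ solves \eqref{eq:first_deriv_S*} with $\mathbf{y}=\mathbf{y}_{\mathbf{u}}$ and right-hand side $\langle\mathbf{g}'',\mathbf{v}\rangle=-2b(\boldsymbol\varphi_{\mathbf{g}};\boldsymbol\varphi_{\mathbf{g}},\mathbf{v})$, and that $\boldsymbol\psi_{\mathbf{g}}$ is divergence free. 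The bilinear part is differentiated twice: each of the two arguments of $b$ receives one $\boldsymbol\varphi_{\mathbf{g}}$, which produces the factor $2$. This right-hand side lies in $\mathbf{H}^{-1}(\Omega)$ by the continuity of $b$ on $[\mathbf{H}_0^1(\Omega)]^3$.

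The key step is the duality argument that eliminates $\boldsymbol\varphi_{\mathbf{g}}$ and $\boldsymbol\psi_{\mathbf{g}}$ through the adjoint state. Let $(\mathbf{z}_{\mathbf{u}},\pi_{\mathbf{u}})$ solve \eqref{eq:adj_eq_var_form} with $\mathbf{y}=\mathbf{y}_{\mathbf{u}}$ and $\mathbf{h}=\mathbf{y}_{\mathbf{u}}-\mathbf{y}_\Omega$. This problem is well posed because $\mathbf{y}_{\mathbf{u}}$ is regular; Theorem~\ref{thm:properties_C_to_S} ensures that $\mathcal{S}'(\mathbf{u})$ is an isomorphism on the neighborhood. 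Test \eqref{eq:adj_eq_var_form} with $(\mathbf{w},s)=(\boldsymbol\varphi_{\mathbf{g}},0)$ and \eqref{eq:first_deriv_S*} with $(\mathbf{v},q)=(\mathbf{z}_{\mathbf{u}},0)$. The pressure terms drop out because both $\boldsymbol\varphi_{\mathbf{g}}$ and $\mathbf{z}_{\mathbf{u}}$ are divergence free, and the left-hand sides coincide term by term. This gives $(\mathbf{y}_{\mathbf{u}}-\mathbf{y}_\Omega,\boldsymbol\varphi_{\mathbf{g}})=(\mathbf{g},\mathbf{z}_{\mathbf{u}})$, which is \eqref{eq:charact_J'}; the same computation appears in the proof of Proposition~\ref{prop:aux_linear_states_rhsL1}. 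Repeating it with $\boldsymbol\psi_{\mathbf{g}}$ in place of $\boldsymbol\varphi_{\mathbf{g}}$ gives $(\mathbf{y}_{\mathbf{u}}-\mathbf{y}_\Omega,\boldsymbol\psi_{\mathbf{g}})=-2b(\boldsymbol\varphi_{\mathbf{g}};\boldsymbol\varphi_{\mathbf{g}},\mathbf{z}_{\mathbf{u}})$, and substituting this into $J''$ yields \eqref{eq:charact_J''}.

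I expect the main obstacle to be bookkeeping rather than analysis. One point is correctly identifying the second-derivative equation, in particular the factor $2$ and the sign convention. The other is checking that the adjoint form \eqref{eq:adj_eq_var_form} is exactly the transpose of the linearized form in \eqref{eq:first_deriv_S*}, with matching argument order in $b$. The $C^2$ regularity itself comes for free from Theorem~\ref{thm:properties_C_to_S}.
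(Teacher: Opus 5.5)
Your proposal is correct. It is the standard argument behind the paper's remark that this theorem is a direct consequence of Theorem~\ref{thm:properties_C_to_S}, for which the paper gives no separate proof: the chain rule for $J=G\circ\mathcal{S}_1$, the second-derivative equation with right-hand side $-2b(\boldsymbol\varphi_{\mathbf{g}};\boldsymbol\varphi_{\mathbf{g}},\cdot)$, and the cross-testing of \eqref{eq:first_deriv_S*} against \eqref{eq:adj_eq_var_form} (as in the proof of Proposition~\ref{prop:aux_linear_states_rhsL1}), with the signs and the factor $2$ checking out.
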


The first-order necessary optimality condition reads as follows: Every locally optimal nonsingular solution $\bar{\mathbf{u}} \in \mathbf{U}_{ad}$ satisfies 
\begin{align}\label{eq:1st_opt_cond}
    J'(\bar{\mathbf{u}})(\mathbf{u} - \bar{\mathbf{u}}) = (\bar{\mathbf{z}}, \mathbf{u} - \bar{\mathbf{u}})_{\mathbf{L}^{2}(\Omega)} \geq 0 \qquad \forall \mathbf{u} \in \mathbf{U}_{ad},
\end{align}
where $(\bar{\mathbf{z}},\bar{\pi}) := (\mathbf{z}_{\bar{\mathbf{u}}},\pi_{\bar{\mathbf{u}}})\in \mathbf{H}_0^1(\Omega)\times L_0^2(\Omega)$ denotes the unique solution to \eqref{eq:adj_eq_var_form} with $\mathbf{y}= \bar{\mathbf{y}}$ and $\mathbf{h} = \bar{\mathbf{y}} - \mathbf{y}_{\Omega}$.
Moreover, inequality \eqref{eq:1st_opt_cond} implies, for $i\in\{1,\ldots,n\}$ and a.e. $x\in\Omega$, that
\begin{align*}
\bar{\mathbf{u}}_{i}(x) = \mathbf{a}_i ~\text{ if }\bar{\mathbf{z}}_i(x) > 0, \quad ~  \bar{\mathbf{u}}_{i}(x) \in [\mathbf{a}_{i}, \mathbf{b}_i] ~\text{ if }\bar{\mathbf{z}}_i(x) = 0, \quad ~ \bar{\mathbf{u}}_{i}(x) = \mathbf{b}_i ~\text{ if }\bar{\mathbf{z}}_i(x) < 0.
\end{align*}

\begin{lemma}[second variation estimate]\label{lemma:sec_order}
Let $\gamma \in (n/(n + 2), 1]$ be given and let $\mathbf{u},\hat{\mathbf{u}} \in \mathcal{O}(\bar{\mathbf{u}})$. 
Define $\mathbf{u}_{\theta} := \mathbf{u} + \theta (\hat{\mathbf{u}} - \mathbf{u})$ for some measurable function $\theta$ satisfying $0 \leq \theta \leq 1$ a.e. in $\Omega$. 
Assume that $\mathbf{y}_{\Omega}\in \mathbf{L}^{\mathsf{p}}(\Omega)$ for $\mathsf{p} > n$ arbitrarily close to $n$.
Then, for all $\varepsilon > 0$ there exists $\delta>0$ such that
\[
    |J''(\mathbf{u})(\hat{\mathbf{u}} - \mathbf{u})^{2} - J''(\mathbf{u}_{\theta})(\hat{\mathbf{u}} - \mathbf{u})^{2}| \leq \varepsilon \|\hat{\mathbf{u}} - \mathbf{u}\|_{\mathbf{L}^{1}(\Omega)}^{1+ \frac{1}{\gamma}}
\]
when $\|\hat{\mathbf{u}} - \mathbf{u}\|_{\mathbf{L}^{1}(\Omega)} < \delta$.
\end{lemma}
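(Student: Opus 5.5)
Set $\mathbf{v} := \hat{\mathbf{u}} - \mathbf{u}$ and use \eqref{eq:charact_J''} for both controls. Let $\boldsymbol\varphi := \boldsymbol\varphi_{\mathbf{v}}$ and $\boldsymbol\varphi_\theta$ be the solutions of \eqref{eq:first_deriv_S*} with right-hand side $\mathbf{g}=\mathbf{v}$, linearized at $\mathbf{y}_{\mathbf{u}}$ and at $\mathbf{y}_{\mathbf{u}_\theta}$, respectively. Let $\mathbf{z}, \mathbf{z}_\theta$ be the corresponding adjoint states with $\mathbf{h}=\mathbf{y}_{\mathbf{u}}-\mathbf{y}_\Omega$ and $\mathbf{h}_\theta = \mathbf{y}_{\mathbf{u}_\theta}-\mathbf{y}_\Omega$. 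Since $\mathbf{u}_\theta$ is a pointwise convex combination of $\mathbf{u}$ and $\hat{\mathbf{u}}$, we have $\mathbf{u}_\theta\in\mathbf{U}_{ad}$, and I will assume $\mathcal{O}(\bar{\mathbf{u}})$ is chosen so that $\mathbf{u}_\theta\in\mathcal{O}(\bar{\mathbf{u}})$ (convexity of $\mathcal{O}(\bar{\mathbf{u}})$ together with an $\mathbf{L}^2$-ball intersected with $\mathbf{U}_{ad}$ suffices). Then
\[
J''(\mathbf{u})\mathbf{v}^2 - J''(\mathbf{u}_\theta)\mathbf{v}^2 = \big(\|\boldsymbol\varphi\|^2_{\mathbf{L}^2(\Omega)} - \|\boldsymbol\varphi_\theta\|^2_{\mathbf{L}^2(\Omega)}\big) - 2\big(b(\boldsymbol\varphi;\boldsymbol\varphi,\mathbf{z}) - b(\boldsymbol\varphi_\theta;\boldsymbol\varphi_\theta,\mathbf{z}_\theta)\big).
\]
I split the second bracket as $b(\boldsymbol\varphi-\boldsymbol\varphi_\theta;\boldsymbol\varphi,\mathbf{z}) + b(\boldsymbol\varphi_\theta;\boldsymbol\varphi-\boldsymbol\varphi_\theta,\mathbf{z}) + b(\boldsymbol\varphi_\theta;\boldsymbol\varphi_\theta,\mathbf{z}-\mathbf{z}_\theta)$. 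Using \eqref{eq:properties_trilinear} (the fields $\boldsymbol\varphi,\boldsymbol\varphi_\theta$ are divergence-free), I move the derivative onto the adjoint state. This gives bounds such as $|b(\mathbf{w}_1;\mathbf{w}_2,\mathbf{z})|\le \|\nabla\mathbf{z}\|_{\mathbf{L}^\infty(\Omega)}\|\mathbf{w}_1\|_{\mathbf{L}^2(\Omega)}\|\mathbf{w}_2\|_{\mathbf{L}^2(\Omega)}$. Every term is then of the form (an $\mathbf{L}^2$-norm of a $\boldsymbol\varphi$-difference or a $\boldsymbol\varphi$) $\times$ (an $\mathbf{L}^2$-norm of a $\boldsymbol\varphi$) $\times$ (either $\|\nabla \mathbf{z}\|_{\mathbf{L}^\infty(\Omega)}$ or $\|\nabla(\mathbf{z}-\mathbf{z}_\theta)\|_{\mathbf{L}^\infty(\Omega)}$). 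The first bracket factors as $(\|\boldsymbol\varphi\|_{\mathbf{L}^2(\Omega)}+\|\boldsymbol\varphi_\theta\|_{\mathbf{L}^2(\Omega)})\|\boldsymbol\varphi-\boldsymbol\varphi_\theta\|_{\mathbf{L}^2(\Omega)}$.

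The ingredients are as follows.
\begin{enumerate}[label=(\roman*)]
\item $\|\nabla\mathbf{z}\|_{\mathbf{L}^\infty(\Omega)}\lesssim 1$ uniformly on the neighborhood, by \eqref{eq:estimate_z_max_norm} with $\mathbf{y}_\Omega\in\mathbf{L}^{\mathsf{p}}(\Omega)$ and the uniform $\mathbf{H}^2$ bounds on the states.
\item $\|\nabla(\mathbf{z}-\mathbf{z}_\theta)\|_{\mathbf{L}^\infty(\Omega)}\lesssim \|\mathbf{y}_{\mathbf{u}}-\mathbf{y}_{\mathbf{u}_\theta}\|_{\mathbf{L}^{\mathsf{p}}(\Omega)} + \|\nabla(\mathbf{y}_{\mathbf{u}}-\mathbf{y}_{\mathbf{u}_\theta})\|_{\mathbf{L}^{\mathsf{p}}(\Omega)}$, by Lemma~\ref{lemma:aux_prop_z1-z2}.
\item $\|\boldsymbol\varphi-\boldsymbol\varphi_\theta\|_{\mathbf{L}^2(\Omega)}\lesssim\|\nabla(\mathbf{y}_{\mathbf{u}}-\mathbf{y}_{\mathbf{u}_\theta})\|_{\mathbf{L}^{\mathsf{p}}(\Omega)}\|\boldsymbol\varphi\|_{\mathbf{L}^2(\Omega)}$, by Proposition~\ref{prop:aux_linear_states}.
\item $\|\boldsymbol\varphi\|_{\mathbf{L}^2(\Omega)}+\|\boldsymbol\varphi_\theta\|_{\mathbf{L}^2(\Omega)}\lesssim\|\mathbf{v}\|_{\mathbf{L}^1(\Omega)}$, by Proposition~\ref{prop:aux_linear_states_rhsL1}. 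This needs the constant there to be uniform over the neighborhood, which holds since the $\mathbf{H}^2$ adjoint bound is uniform.
\end{enumerate}
Collecting these gives
\[
|J''(\mathbf{u})\mathbf{v}^2-J''(\mathbf{u}_\theta)\mathbf{v}^2|\lesssim \|\mathbf{y}_{\mathbf{u}}-\mathbf{y}_{\mathbf{u}_\theta}\|_{\mathbf{W}^{1,\mathsf{p}}(\Omega)}\,\|\mathbf{v}\|_{\mathbf{L}^1(\Omega)}^2 .
\]

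Next I control the state difference in terms of $\|\mathbf{v}\|_{\mathbf{L}^1(\Omega)}$. Pick $\mathsf{r}\in(2n/(n+2),2]$ with $\mathbf{W}^{2,\mathsf{r}}(\Omega)\hookrightarrow\mathbf{W}^{1,\mathsf{p}}(\Omega)$, i.e.\ $\mathsf{r}>n\mathsf{p}/(n+\mathsf{p})$, which is close to $n/2$ since $\mathsf{p}$ is close to $n$. Proposition~\ref{prop:states_y-yt} then gives $\|\mathbf{y}_{\mathbf{u}}-\mathbf{y}_{\mathbf{u}_\theta}\|_{\mathbf{W}^{1,\mathsf{p}}(\Omega)}\lesssim\|\theta\mathbf{v}\|_{\mathbf{L}^{\mathsf{r}}(\Omega)}\le\|\mathbf{v}\|_{\mathbf{L}^{\mathsf{r}}(\Omega)}$. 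Because $\mathbf{v}$ is bounded in $\mathbf{L}^\infty(\Omega)$ by $\max_i(\mathbf{b}_i-\mathbf{a}_i)$, interpolation yields $\|\mathbf{v}\|_{\mathbf{L}^{\mathsf{r}}(\Omega)}\lesssim\|\mathbf{v}\|_{\mathbf{L}^1(\Omega)}^{1/\mathsf{r}}$. Hence the whole difference is $\lesssim\|\mathbf{v}\|_{\mathbf{L}^1(\Omega)}^{2+1/\mathsf{r}}$. It remains to have $2+1/\mathsf{r}>1+1/\gamma$, i.e.\ $\mathsf{r}<\gamma/(1-\gamma)$; the case $\gamma=1$ is trivial. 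Then $C\|\mathbf{v}\|_{\mathbf{L}^1(\Omega)}^{2+1/\mathsf{r}}=C\|\mathbf{v}\|_{\mathbf{L}^1(\Omega)}^{1+1/\gamma}\,\|\mathbf{v}\|_{\mathbf{L}^1(\Omega)}^{1+1/\mathsf{r}-1/\gamma}\le\varepsilon\|\mathbf{v}\|_{\mathbf{L}^1(\Omega)}^{1+1/\gamma}$ as soon as $\|\mathbf{v}\|_{\mathbf{L}^1(\Omega)}<\delta:=(\varepsilon/C)^{1/(1+1/\mathsf{r}-1/\gamma)}$.

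The main obstacle is the compatibility of exponents. The condition $\gamma>n/(n+2)$ is equivalent to $\gamma/(1-\gamma)>n/2$, which opens the window $\mathsf{r}\in(\max\{2n/(n+2),n\mathsf{p}/(n+\mathsf{p})\},\min\{2,\gamma/(1-\gamma)\})$. This window is nonempty precisely because $\mathsf{p}$ may be taken arbitrarily close to $n$, so that $n\mathsf{p}/(n+\mathsf{p})$ is arbitrarily close to $n/2$. Besides this, the following must be checked carefully:
\begin{itemize}
\item Lemma~\ref{lemma:aux_prop_z1-z2}, Proposition~\ref{prop:aux_linear_states} and Proposition~\ref{prop:aux_linear_states_rhsL1} apply with constants uniform over $\mathcal{O}(\bar{\mathbf{u}})$.
\item The $\|\mathbf{h}-\mathbf{h}_\theta\|_{\mathbf{L}^{\mathsf{p}}(\Omega)}$ term is absorbed, via $\mathbf{W}^{1,\mathsf{p}}\hookrightarrow\mathbf{L}^\infty$.
\item The trilinear terms are legitimately rearranged using only $\mathbf{L}^2$ norms of $\boldsymbol\varphi$, $\boldsymbol\varphi_\theta$ and the $\mathbf{W}^{1,\infty}$ norm of the adjoint.
\end{itemize}
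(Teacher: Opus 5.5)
Your proposal is correct and follows the paper's proof essentially step for step. It uses the same four-term splitting (your three trilinear pieces are the paper's $\mathsf{II}$--$\mathsf{IV}$), bounded through Proposition~\ref{prop:aux_linear_states}, Lemma~\ref{lemma:aux_prop_z1-z2}, \eqref{eq:estimate_z_max_norm} and Proposition~\ref{prop:aux_linear_states_rhsL1}, followed by Proposition~\ref{prop:states_y-yt}, the embedding $\mathbf{W}^{2,\mathsf{r}}(\Omega)\hookrightarrow\mathbf{W}^{1,\mathsf{p}}(\Omega)$ and $\|\mathbf{v}\|_{\mathbf{L}^{\mathsf{r}}(\Omega)}\lesssim\|\mathbf{v}\|_{\mathbf{L}^{1}(\Omega)}^{1/\mathsf{r}}$. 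Your explicit window $\mathsf{r}<\gamma/(1-\gamma)$ is a cleaner version of the paper's closing ``straightforward analysis'', since what is needed is a strictly positive leftover power rather than the identity $2+1/\mathsf{r}-\tilde\delta=1+1/\gamma$. Like the paper, you assume rather than prove $\mathbf{u}_\theta\in\mathcal{O}(\bar{\mathbf{u}})$; your parenthetical justification does not work for a measurable $\theta$, because an $\mathbf{L}^2$-ball is not stable under such pointwise convex combinations (the distance to $\bar{\mathbf{u}}$ can grow by a factor $\sqrt{2}$), so one should work in a slightly smaller neighborhood.
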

\begin{proof}
    Since $\mathbf{u},\hat{\mathbf{u}} \in \mathcal{O}(\bar{\mathbf{u}})$, it follows that $\mathbf{u}_{\theta}\in \mathcal{O}(\bar{\mathbf{u}})$. 
    Moreover, in view of Theorem \ref{thm:properties_C_to_S}, we see that $(\boldsymbol\varphi,\zeta) := \mathcal{S}'(\mathbf{u})(\hat{\mathbf{u}} - \mathbf{u})$ and $(\boldsymbol \varphi_{\theta},\zeta_{\theta}) := \mathcal{S}'(\mathbf{u}_{\theta})(\hat{\mathbf{u}} - \mathbf{u})$ are well defined.
    Similarly, the adjoint states $(\mathbf{z}_{\mathbf{u}},\pi_{\mathbf{u}})$ and $(\mathbf{z}_{\mathbf{u}_{\theta}},\pi_{\mathbf{u}_{\theta}})$ are also well defined.
    With the previous variables at hand, and using the characterization in \eqref{eq:charact_J''}, we write
    \begin{align*}
    &\, J''(\mathbf{u})(\hat{\mathbf{u}} - \mathbf{u})^{2} - J''(\mathbf{u}_{\theta})(\hat{\mathbf{u}} - \mathbf{u})^{2}
        = \|\boldsymbol \varphi\|_{\mathbf{L}^{2}(\Omega)}^{2} - \|\boldsymbol \varphi_{\theta}\|_{\mathbf{L}^{2}(\Omega)}^{2} + 2b(\boldsymbol \varphi_{\theta}; \boldsymbol \varphi_{\theta}, \mathbf{z}_{\mathbf{u}_{\theta}})\\
    &\, - 2b(\boldsymbol \varphi; \boldsymbol \varphi, \mathbf{z}_{\mathbf{u}}) = (\boldsymbol \varphi - \boldsymbol \varphi_{\theta}, \boldsymbol \varphi + \boldsymbol \varphi_{\theta})_{\mathbf{L}^{2}(\Omega)} + 2b(\boldsymbol \varphi_{\theta}; \boldsymbol \varphi_{\theta}, \mathbf{z}_{\mathbf{u}_{\theta}} - \mathbf{z}_{\mathbf{u}}) \\
    & \, + 2b(\boldsymbol \varphi_{\theta}; \boldsymbol \varphi_{\theta} - \boldsymbol \varphi, \mathbf{z}_{\mathbf{u}}) + 2b(\boldsymbol \varphi_{\theta} - \boldsymbol \varphi; \boldsymbol \varphi, \mathbf{z}_{\mathbf{u}}) =: \mathsf{I} + \mathsf{II} + \mathsf{III} + \mathsf{IV}.
    \end{align*}
We now provide an estimate for each term in the previous equality. 

Using $\|\boldsymbol \varphi_{\theta}\|_{\mathbf{L}^{2}(\Omega)} \leq \|\boldsymbol \varphi_{\theta} - \boldsymbol \varphi\|_{\mathbf{L}^{2}(\Omega)} + \|\boldsymbol \varphi\|_{\mathbf{L}^{2}(\Omega)}$ in combination with Proposition \ref{prop:aux_linear_states}, we get
\begin{align*}
|\mathsf{I}| 
\leq &\,
\|\boldsymbol \varphi - \boldsymbol \varphi_{\theta}\|_{\mathbf{L}^{2}(\Omega)}(\|\boldsymbol \varphi\|_{\mathbf{L}^{2}(\Omega)} + \|\boldsymbol \varphi_{\theta}\|_{\mathbf{L}^{2}(\Omega)}) \\
\leq &\,
\|\boldsymbol \varphi - \boldsymbol \varphi_{\theta}\|_{\mathbf{L}^{2}(\Omega)}(2\|\boldsymbol \varphi\|_{\mathbf{L}^{2}(\Omega)} + \|\boldsymbol \varphi_{\theta} - \boldsymbol \varphi\|_{\mathbf{L}^{2}(\Omega)})  \\
\lesssim &\,
\|\nabla(\mathbf{y} - \mathbf{y}_{\theta})\|_{\mathbf{W}^{1,\mathsf{p}}(\Omega)}\|\boldsymbol{\varphi}\|_{\mathbf{L}^{2}(\Omega)}^{2}(1 + \|\nabla(\mathbf{y} - \mathbf{y}_{\theta})\|_{\mathbf{W}^{1,\mathsf{p}}(\Omega)}).
\end{align*}
We recall that $\mathsf{p}>n$ is arbitrarily close to $n$.
Since the term $(1 + \|\nabla(\mathbf{y} - \mathbf{y}_{\theta})\|_{\mathbf{W}^{1,\mathsf{p}}(\Omega)})$ is uniformly bounded, we conclude that
\[
|\mathsf{I}| 
\lesssim
\|\nabla(\mathbf{y} - \mathbf{y}_{\theta})\|_{\mathbf{W}^{1,\mathsf{p}}(\Omega)}\|\boldsymbol{\varphi}\|_{\mathbf{L}^{2}(\Omega)}^{2}.
\]

We now concentrate on $\mathsf{II}$. 
The use of \eqref{eq:properties_trilinear}, H\"older's inequality, and Lemma \ref{lemma:aux_prop_z1-z2} implies that 
\begin{align*}
|\mathsf{II}| 
=
|2b(\boldsymbol \varphi_{\theta};\mathbf{z}_{\mathbf{u}_{\theta}} - \mathbf{z}_{\mathbf{u}},\boldsymbol \varphi_{\theta})|
\lesssim &\, 
\|\nabla (\mathbf{z}_{\mathbf{u}_{\theta}} - \mathbf{z}_{\mathbf{u}})\|_{\mathbf{L}^{\infty}(\Omega)}\|\boldsymbol \varphi_{\theta}\|_{\mathbf{L}^{2}(\Omega)}^{2}\\
\lesssim &\, (\|\mathbf{y}_{\theta} - \mathbf{y}\|_{\mathbf{L}^{\mathsf{p}}(\Omega)} + \|\nabla (\mathbf{y}_{\theta} - \mathbf{y})\|_{\mathbf{L}^{\mathsf{p}}(\Omega)})\|\boldsymbol \varphi_{\theta}\|_{\mathbf{L}^{2}(\Omega)}^{2}.
\end{align*}
This, combined with Poincar\'e's inequality, gives $|\mathsf{II}|\lesssim \|\nabla(\mathbf{y} - \mathbf{y}_{\theta})\|_{\mathbf{W}^{1,\mathsf{p}}(\Omega)}\|\boldsymbol{\varphi}_{\theta}\|_{\mathbf{L}^{2}(\Omega)}^{2}$, which, in light of Proposition \ref{prop:aux_linear_states}, reveals that $|\mathsf{II}|\lesssim \|\nabla(\mathbf{y} - \mathbf{y}_{\theta})\|_{\mathbf{W}^{1,\mathsf{p}}(\Omega)}\|\boldsymbol{\varphi}\|_{\mathbf{L}^{2}(\Omega)}^{2}$.

To estimate $\mathsf{III}$ we utilize the property \eqref{eq:properties_trilinear}, H\"older's inequality, Proposition \ref{prop:aux_linear_states}, and estimate \eqref{eq:estimate_z_max_norm}. 
These arguments yield
\begin{multline*}
|\mathsf{III}|
=
|2b(\boldsymbol \varphi_{\theta};\mathbf{z}_{\mathbf{u}},\boldsymbol \varphi_{\theta} - \boldsymbol \varphi)|
\lesssim 
\|\boldsymbol \varphi_{\theta}\|_{\mathbf{L}^{2}(\Omega)}\|\nabla\mathbf{z}_{\mathbf{u}}\|_{\mathbf{L}^{\infty}(\Omega)}\|\boldsymbol \varphi_{\theta} - \boldsymbol \varphi\|_{\mathbf{L}^{2}(\Omega)}\\
\lesssim  \|\boldsymbol \varphi_{\theta}\|_{\mathbf{L}^{2}(\Omega)}(\|\mathbf{y} - \mathbf{y}_{\Omega}\|_{\mathbf{L}^{\mathsf{p}}} + \|\nabla \mathbf{y}\|_{\mathbf{L}^{\mathsf{p}}(\Omega)})\|\nabla(\mathbf{y} - \mathbf{y}_{\theta})\|_{\mathbf{W}^{1,\mathsf{p}}(\Omega)}\|\boldsymbol{\varphi}\|_{\mathbf{L}^{2}(\Omega)}.
\end{multline*}
Hence, using $\|\boldsymbol \varphi_{\theta}\|_{\mathbf{L}^{2}(\Omega)}\lesssim \|\boldsymbol{\varphi}\|_{\mathbf{L}^{2}(\Omega)}(1 + \|\nabla(\mathbf{y} - \mathbf{y}_{\theta})\|_{\mathbf{W}^{1,\mathsf{p}}(\Omega)})$ and the boundedness of $\mathbf{y},\mathbf{y}_{\theta}\in \mathbf{W}^{1,\mathsf{p}}(\Omega)$ and $\mathbf{y}_{\Omega}\in \mathbf{L}^{\mathsf{p}}(\Omega)$, we obtain  $|\mathsf{III}| \lesssim \|\nabla(\mathbf{y} - \mathbf{y}_{\theta})\|_{\mathbf{W}^{1,\mathsf{p}}(\Omega)}\|\boldsymbol{\varphi}\|_{\mathbf{L}^{2}(\Omega)}^{2}$.

The estimation of $\mathsf{IV}$ follows arguments similar to those used to estimate $\mathsf{III}$.
In particular, we get $|\mathsf{IV}|\lesssim \|\nabla(\mathbf{y} - \mathbf{y}_{\theta})\|_{\mathbf{W}^{1,\mathsf{p}}(\Omega)}\|\boldsymbol{\varphi}\|_{\mathbf{L}^{2}(\Omega)}^{2}$.

A combination of the estimates derived for $\mathsf{I},\mathsf{II}, \mathsf{III}$ and $\mathsf{IV}$ reveals that 
\begin{align*}
|J''(\mathbf{u})(\hat{\mathbf{u}} - \mathbf{u})^{2} - J''(\mathbf{u}_{\theta})(\hat{\mathbf{u}} - \mathbf{u})^{2}|
     \lesssim
 \|\nabla(\mathbf{y} - \mathbf{y}_{\theta})\|_{\mathbf{W}^{1,\mathsf{p}}(\Omega)}\|\boldsymbol{\varphi}\|_{\mathbf{L}^{2}(\Omega)}^{2},
\end{align*}
from which, in light of Proposition \ref{prop:aux_linear_states_rhsL1}, we get 
\begin{align}\label{eq:j''_last_estimate}
    |J''(\mathbf{u})(\hat{\mathbf{u}} - \mathbf{u})^{2} - J''(\mathbf{u}_{\theta})(\hat{\mathbf{u}} - \mathbf{u})^{2}|
     \lesssim
 \|\nabla(\mathbf{y} - \mathbf{y}_{\theta})\|_{\mathbf{W}^{1,\mathsf{p}}(\Omega)}\|\hat{\mathbf{u}} - \mathbf{u}\|_{\mathbf{L}^{1}(\Omega)}^{2}.
\end{align}
To estimate $\|\nabla(\mathbf{y} - \mathbf{y}_{\theta})\|_{\mathbf{W}^{1,\mathsf{p}}(\Omega)}$, we use the embedding $\mathbf{W}^{2,\mathsf{r}}(\Omega)\hookrightarrow \mathbf{W}^{1,\mathsf{p}}(\Omega)$ with $\mathsf{r} > (n/2 + \delta_{\mathsf{p}})$ with $\delta_{\mathsf{p}} > 0$ arbitrarily close to $0$.
This embedding and the use of Proposition \ref{prop:states_y-yt} imply that
\begin{align*}
    \|\nabla(\mathbf{y} - \mathbf{y}_{\theta})\|_{\mathbf{W}^{1,\mathsf{p}}(\Omega)} 
    \lesssim
    \|\mathbf{y} - \mathbf{y}_{\theta}\|_{\mathbf{W}^{2,\mathsf{r}}(\Omega)} 
    \lesssim
    \|\mathbf{u} - \mathbf{u}_{\theta}\|_{\mathbf{L}^{\mathsf{r}}(\Omega)}.
\end{align*}
Using, in the latter, the estimate $\|\mathbf{u} - \mathbf{u}_{\theta}\|_{\mathbf{L}^{\mathsf{r}}(\Omega)} \lesssim \|\mathbf{u} - \mathbf{u}_{\theta}\|_{\mathbf{L}^{1}(\Omega)}^{\frac{1}{\mathsf{r}}}$, we derive a refined bound, which, when substituted into \eqref{eq:j''_last_estimate}, yields
\begin{align*}
    |J''(\mathbf{u})(\hat{\mathbf{u}} - \mathbf{u})^{2} - J''(\mathbf{u}_{\theta})(\hat{\mathbf{u}} - \mathbf{u})^{2}|
    \lesssim
\|\hat{\mathbf{u}} - \mathbf{u}\|_{\mathbf{L}^{1}(\Omega)}^{2 + \frac{1}{\mathsf{r}}} = \|\hat{\mathbf{u}} - \mathbf{u}\|_{\mathbf{L}^{1}(\Omega)}^{\tilde{\delta}}\|\hat{\mathbf{u}} - \mathbf{u}\|_{\mathbf{L}^{1}(\Omega)}^{2 + \frac{1}{\mathsf{r}} - \tilde{\delta}},
\end{align*}
with $\tilde{\delta} > 0$ arbitrarily small. 
A straightforward analysis reveals that $2 + \frac{1}{\mathsf{r}} - \tilde{\delta} = 1 + \frac{1}{\gamma}$ with $\gamma \in (n/(n + 2), 1]$.
\end{proof}

\begin{assu}\label{sufficientcon}
Let $\bar{\mathbf{u}}$ be a local nonsingular solution of \eqref{eq:minimize_cost_func}, i.e., the velocity field $\bar{\mathbf{y}}$, with $(\bar{\mathbf{y}},\bar{p})\in S(\bar{{\mathbf{u}}})$, is regular. 
    We assume that there exist constants $c,\delta>0$ and $\gamma \in (n/(n + 2), 1]$ such that
    \begin{equation}\label{controlgrowth}
        J'(\bar{\mathbf{u}})(\mathbf{u}-\bar{\mathbf{u}})+J''(\bar{\mathbf{u}})(\mathbf{u}-\bar{\mathbf{u}})^2 \geq c \|{\mathbf{u}-\bar{\mathbf{u}}}\|_{\mathbf{L}^1(\Omega)}^{1+\frac{1}{\gamma}},
    \end{equation}
  for all $\mathbf{u}\in \mathbf{U}_{ad}$ with $ \|{\mathbf{u}-\bar{\mathbf{u}}}\|_{\mathbf{L}^1(\Omega)}\leq \delta$.
\end{assu}
It is well known that a  growth of the functional, respectively, its joint first and second variation with regard to the controls as assumed in \eqref{controlgrowth} implies the bang-bang structure of the control $\bar{\mathbf{u}}$, see, for instance, \cite[Proposition 4.1]{zbMATH07643482}. 
    A reverse result also holds, that is, if $\bar{\mathbf{u}}$ is a strict local minimizer that is bang-bang, the functional satisfies a growth in dependence of the $L^1$-norm of the controls \cite[Proposition 2.12]{zbMATH07865499}.
    Besides, the condition \eqref{controlgrowth} is directly related to the strong metric subregularity of the optimality mapping, which is used to study the behavior of the optimal control problem under perturbations. 
    That is, condition \eqref{controlgrowth} is sufficient for strong metric Hölder subregularity of the optimality mapping \cite[Theorem 4.11]{zbMATH07643482}, while a slightly weaker version, where the $J''$-term is weighted with the factor $1/2$, \cite[Theorem A.5]{zbMATH07846602}, is necessary.
    There are several conditions that, together with an additional mild assumption on the second variation $J''$, are known to imply \eqref{controlgrowth}.
    For instance, it is well known that the component-wise condition associated with the adjoint velocity field $\bar{\mathbf{z}}$, that there exist $c_i, \varepsilon_i, \gamma_i>0$, $i=1,...,n$, such that
    \begin{equation}\label{eq:adjiointmeasure}
        \vert [ \vert \bar{\mathbf{z}}_i \vert \leq \varepsilon ] \vert \leq c_i \varepsilon^{\gamma_i},
    \end{equation}
    implies, defining $\gamma:=\min \{ \gamma_i\}$ and $\tilde{c}:= \max \{c_i\}$, that
    \begin{equation}\label{eq:firstvargrowth}
        J'(\bar{\mathbf{u}})(\mathbf{u}-\bar{\mathbf{u}}) \geq \tilde{c} \| \mathbf{u}-\bar{\mathbf{u}} \|_{\mathbf{L}^1(\Omega)}^{1+\frac{1}{\gamma}}.
    \end{equation}
In \cite[Lemma 3.2]{MR2891922} it was shown that, for $\gamma_i=1$, \eqref{eq:adjiointmeasure} is implied by the condition that the adjoint $\bar{\mathbf{z}}$ belongs to $\mathbf{C}^1(\bar{\Omega})$ and satisfies, on $K = \{ x \in \bar{\Omega} \mid \bar{\mathbf{z}}_{i}(x) = 0\}$, that
\begin{equation}\label{eq:cond_DH}
    \min_{x \in K} |\nabla \bar{\mathbf{z}}_{i}(x)| > 0.
\end{equation}
Then we can ask whether a similar condition holds for $\gamma<1$, as in our case.
An answer to this question is given in the appendix.

The proof of the next theorem follows standard methods employing Taylor's theorem and Lemma \ref{lemma:sec_order}.

\begin{theorem}[local optimality]
\label{strictilocalopt}
Let $\bar{\mathbf{u}}$ be a local nonsingular solution of \eqref{eq:minimize_cost_func} satisfying assumption \eqref{controlgrowth}. 
Then, it holds 
\[
J(\mathbf{u})-J(\bar{\mathbf{u}})  \gtrsim  \|{\mathbf{u}-\bar{\mathbf{u}}}\|_{\mathbf{L}^1(\Omega)}^{1+\frac{1}{\gamma}}
\]
 for all $\mathbf{u}\in \mathbf{U}_{ad}$ with $ \|{\mathbf{u}-\bar{\mathbf{u}}}\|_{\mathbf{L}^1(\Omega)}\leq \delta$.
\end{theorem}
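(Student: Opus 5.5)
The plan is a second-order Taylor expansion of $J$ around $\bar{\mathbf{u}}$ with a pointwise (measurable) intermediate point, followed by Lemma~\ref{lemma:sec_order} to compare the second variation at that point with $J''(\bar{\mathbf{u}})$, and finally the growth assumption \eqref{controlgrowth}.

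First, fix $\mathbf{u}\in\mathbf{U}_{ad}$ with $\|\mathbf{u}-\bar{\mathbf{u}}\|_{\mathbf{L}^1(\Omega)}\le\delta$. I shrink $\delta$ if necessary so that every such $\mathbf{u}$ lies in $\mathcal{O}(\bar{\mathbf{u}})$. This is possible because $\mathbf{U}_{ad}$ is bounded in $\mathbf{L}^\infty(\Omega)$, so $\|\mathbf{u}-\bar{\mathbf{u}}\|_{\mathbf{L}^2(\Omega)}\lesssim\|\mathbf{u}-\bar{\mathbf{u}}\|_{\mathbf{L}^1(\Omega)}^{1/2}$, and $\mathcal{O}(\bar{\mathbf{u}})$ is an $\mathbf{L}^2$-neighborhood. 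Since $\mathcal{O}(\bar{\mathbf{u}})$ can be taken convex, the segment between $\bar{\mathbf{u}}$ and $\mathbf{u}$ stays in it, and $J$ is $C^2$ there. Taylor's theorem in Lagrange form applied to $t\mapsto J(\bar{\mathbf{u}}+t(\mathbf{u}-\bar{\mathbf{u}}))$ gives, for some $\vartheta\in(0,1)$,
\[
J(\mathbf{u})-J(\bar{\mathbf{u}})=J'(\bar{\mathbf{u}})(\mathbf{u}-\bar{\mathbf{u}})+\tfrac12 J''(\mathbf{u}_\vartheta)(\mathbf{u}-\bar{\mathbf{u}})^2,
\qquad \mathbf{u}_\vartheta:=\bar{\mathbf{u}}+\vartheta(\mathbf{u}-\bar{\mathbf{u}}).
\]
Here $\vartheta$ is a constant, which is a special case of the measurable $\theta$ allowed in Lemma~\ref{lemma:sec_order}.

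Second, I rewrite this expansion so that \eqref{controlgrowth} can be used. Since $J'(\bar{\mathbf{u}})(\mathbf{u}-\bar{\mathbf{u}})\ge 0$ by \eqref{eq:1st_opt_cond}, I have
\[
J'(\bar{\mathbf{u}})(\mathbf{u}-\bar{\mathbf{u}})+\tfrac12 J''(\bar{\mathbf{u}})(\mathbf{u}-\bar{\mathbf{u}})^2\ge \tfrac12\big[J'(\bar{\mathbf{u}})(\mathbf{u}-\bar{\mathbf{u}})+J''(\bar{\mathbf{u}})(\mathbf{u}-\bar{\mathbf{u}})^2\big]\ge \tfrac c2\|\mathbf{u}-\bar{\mathbf{u}}\|_{\mathbf{L}^1(\Omega)}^{1+\frac1\gamma}.
\]
Adding and subtracting $\tfrac12 J''(\bar{\mathbf{u}})(\mathbf{u}-\bar{\mathbf{u}})^2$ in the Taylor identity then yields
\[
J(\mathbf{u})-J(\bar{\mathbf{u}})\ge \tfrac c2\|\mathbf{u}-\bar{\mathbf{u}}\|_{\mathbf{L}^1(\Omega)}^{1+\frac1\gamma}-\tfrac12\big|J''(\bar{\mathbf{u}})(\mathbf{u}-\bar{\mathbf{u}})^2-J''(\mathbf{u}_\vartheta)(\mathbf{u}-\bar{\mathbf{u}})^2\big|.
\]

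Third, I control the remainder with Lemma~\ref{lemma:sec_order}, applied with $\mathbf{u}\leftarrow\bar{\mathbf{u}}$, $\hat{\mathbf{u}}\leftarrow\mathbf{u}$, $\theta\equiv\vartheta$ and $\varepsilon=c/2$. The lemma requires $\mathbf{y}_\Omega\in\mathbf{L}^{\mathsf{p}}(\Omega)$ for some $\mathsf{p}>n$, which I take as the standing assumption. It provides some $\delta'>0$ such that the remainder is at most $\tfrac c2\|\mathbf{u}-\bar{\mathbf{u}}\|_{\mathbf{L}^1(\Omega)}^{1+\frac1\gamma}$ whenever $\|\mathbf{u}-\bar{\mathbf{u}}\|_{\mathbf{L}^1(\Omega)}<\delta'$. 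The remainder term carries the factor $\tfrac12$, so it is bounded by $\tfrac c4\|\mathbf{u}-\bar{\mathbf{u}}\|_{\mathbf{L}^1(\Omega)}^{1+\frac1\gamma}$, and therefore $J(\mathbf{u})-J(\bar{\mathbf{u}})\ge\tfrac c4\|\mathbf{u}-\bar{\mathbf{u}}\|_{\mathbf{L}^1(\Omega)}^{1+\frac1\gamma}$. This holds for $\|\mathbf{u}-\bar{\mathbf{u}}\|_{\mathbf{L}^1(\Omega)}\le\min\{\delta,\delta'\}$ after the shrinkage above, which is how I read the $\delta$ in the statement.

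The main technical point is making sure Lemma~\ref{lemma:sec_order} applies uniformly. Its constants come from uniform bounds on $\|\mathcal{S}'\|$ and on $\|\mathbf{y}\|_{\mathbf{H}^2(\Omega)}$ over the neighborhood. This requires choosing $\mathcal{O}(\bar{\mathbf{u}})$ small enough that those bounds hold, and then shrinking $\delta$ to keep $\mathbf{u}$ and $\mathbf{u}_\vartheta$ inside it. Beyond that, the argument is routine.
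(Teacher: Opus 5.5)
Your proof is correct and follows the route the paper indicates. The paper only says the proof "follows standard methods employing Taylor's theorem and Lemma~\ref{lemma:sec_order}", and your argument is exactly that: a Lagrange-form Taylor expansion along the segment, control of $J''(\mathbf{u}_\vartheta)-J''(\bar{\mathbf{u}})$ via Lemma~\ref{lemma:sec_order}, and then \eqref{controlgrowth}. You also correctly fill in the details the paper leaves implicit: using $J'(\bar{\mathbf{u}})(\mathbf{u}-\bar{\mathbf{u}})\ge 0$ from \eqref{eq:1st_opt_cond} to absorb the factor $\tfrac12$ in front of $J''$, the standing hypothesis $\mathbf{y}_\Omega\in\mathbf{L}^{\mathsf{p}}(\Omega)$, and shrinking $\delta$ so that $\mathbf{u}\in\mathcal{O}(\bar{\mathbf{u}})$.
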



\section{Finite element approximation}\label{sec:fem}

We denote by $\mathcal{T}_h = \{ T\}$ a conforming partition of $\bar{\Omega}$ into simplices $T$ of size $h_T = \text{diam}(T)$, with $h:=\max \{ h_T: T \in \mathcal{T}_h \}$. 
By $\mathbb{T} = \{\mathcal{T}_h \}_{h>0}$, we denote a collection of conforming and quasi-uniform meshes $\mathcal{T}_h$.

To approximate velocity fields and pressures we consider the lowest order Taylor--Hood pair (see, e.g., \cite[Section 4.2.5]{MR2050138}):
\begin{align*}
Q_h &=\{ q_h \in C(\bar{\Omega})\, : \, q_h|_T \in \mathbb{P}_{1}(T) \ \forall \: T \in\mathcal{T}_h \}  \cap L_0^2(\Omega), \\
\mathbf{V}_h &=\{ \mathbf{v}_h \in \mathbf{C}(\bar{\Omega})\, : \, \mathbf{v}_h|_T \in [\mathbb{P}_2(T)]^n \ \forall \: T\in\mathcal{T}_h \}\cap \mathbf{H}_0^1(\Omega).
\end{align*}


\subsection{Discrete state equations}\label{sec:fem_state}

Let $\mathbf{u}\in \mathbf{L}^2(\Omega)$. We introduce the following finite element approximation of problem \eqref{eq:state_equations}: Find $(\mathbf{y}_{h},p_{h})\in \mathbf{V}_{h}\times Q_{h}$ such that 
\begin{align}
\begin{split}
\label{eq:discrete_state_eq}
\nu(\nabla \mathbf{y}_{h},\nabla \mathbf{v}_{h})_{\mathbf{L}^2(\Omega)} + b(\mathbf{y}_{h};\mathbf{y}_{h},\mathbf{v}_{h}) - (p_{h},\text{div }\mathbf{v}_{h})_{\Omega}
= &\, (\mathbf{u},\mathbf{v}_{h})_{\mathbf{L}^2(\Omega)}
\\
(q_{h},\text{div }\mathbf{y}_{h})_{\Omega} = &\, 0
\end{split}
\end{align}
for all $(\mathbf{v}_{h},q_{h})\in \mathbf{V}_{h}\times Q_{h}$.
If $\mathbf{u}\in\mathbf{L}^2(\Omega)$ is arbitrary, then problem \eqref{eq:discrete_state_eq} does not necessarily have a unique solution $(\mathbf{y}_{h},p_{h})$.
However, if $\mathbf{u}$ is close enough to a given local nonsingular solution $\bar{\mathbf{u}}$ to \eqref{eq:minimize_cost_func}, then there is a unique discrete solution to \eqref{eq:discrete_state_eq} that is sufficiently close to $(\bar{\mathbf{y}}, \bar{p})$. 
To be precise, there exist $\mathfrak{s},\mathfrak{l}> 0$, independent of $h$, and $h_{\star} > 0$ such that for all $h \in (0, h_{\star})$ and $\mathbf{u}\in B_{\mathfrak{s}}(\bar{\mathbf{u}})\subset\mathbf{L}^2(\Omega)$, problem \eqref{eq:discrete_state_eq} admits a unique solution $(\mathbf{y}_{h},p_{h})\in \left(B_{\mathfrak{l}}(\bar{\mathbf{y}})\times B_{\mathfrak{l}}(\bar{p})\right)\cap \left(\mathbf{V}_{h}\times Q_{h}\right)\subset \mathbf{H}_0^1(\Omega)\times L_0^2(\Omega)$; see \cite[Theorem 4.8]{MR2338434}.
We can thus define the operator $\mathcal{S}_{h} : B_{\mathfrak{s}}(\bar{\mathbf{u}}) \to B_{\mathfrak{l}}(\bar{\mathbf{y}})\times B_{\mathfrak{l}}(\bar{p})$ by $\mathcal{S}_{h}(\mathbf{u}) = (\mathbf{y}_{h},p_{h})$, with $(\mathbf{y}_{h},p_{h})$ being the unique solution to \eqref{eq:discrete_state_eq}.


\subsection{Semidiscrete optimal control problem}\label{sec:fem_ocp}

We consider a \emph{semidiscrete approach}, based on the variational discretization \cite{MR2122182}, in which the control variable is not discretized.
This approximation reads as follows: Find $\min \mathcal{J}_{h}(\boldsymbol{u})$ subject to 
\begin{align}
\label{eq:discrete_state_eq_semi}
\begin{split}
\nu(\nabla \mathbf{y}_{h},\nabla \mathbf{v}_{h})_{\mathbf{L}^2(\Omega)} + b(\mathbf{y}_{h};\mathbf{y}_{h},\mathbf{v}_{h}) - (p_{h},\text{div }\mathbf{v}_{h})_{\Omega}
=&\, (\boldsymbol{u},\mathbf{v}_{h})_{\mathbf{L}^2(\Omega)} \\
\qquad (q_{h},\text{div }\mathbf{y}_{h})_{\Omega} = &\, 0
\end{split}
\end{align}
for all $(\mathbf{v}_{h},q_{h})\in \mathbf{V}_{h}\times Q_{h}$, and $\boldsymbol u\in\mathbf{U}_{ad}$. 

\begin{theorem}[existence and convergence]\label{thm:exist_and_conv_sol_var}
Assume that \eqref{eq:minimize_cost_func} admits a local nonsingular solution $\bar{\mathbf{u}}$.
Then, there exists $h_{*} > 0$ such that for all $h \in (0,h_{*})$ the semidiscrete problem has a solution $\bar{\boldsymbol{u}}$. 
\end{theorem}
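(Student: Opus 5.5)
The plan is to follow the standard localization argument of Casas--Mateos--Raymond for nonconvex problems. Because the discrete state map $\mathcal{S}_h$ is only defined near $\bar{\mathbf{u}}$, we cannot minimize over all of $\mathbf{U}_{ad}$. Instead we fix $\rho>0$ small and study the auxiliary problem
\[
\min \mathcal{J}_h(\boldsymbol{u}) := \tfrac12\|\mathbf{y}_h(\boldsymbol{u})-\mathbf{y}_\Omega\|_{\mathbf{L}^2(\Omega)}^2 \quad\text{subject to}\quad \boldsymbol{u}\in \mathbf{U}_{ad}\cap \bar{B}_\rho(\bar{\mathbf{u}}),
\]
where $\bar B_\rho(\bar{\mathbf{u}})$ is the closed $\mathbf{L}^2(\Omega)$-ball and $(\mathbf{y}_h(\boldsymbol{u}),p_h(\boldsymbol{u}))=\mathcal{S}_h(\boldsymbol{u})$. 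We choose $\rho\le\mathfrak{s}$, so that for $h<h_\star$ the map $\mathcal{S}_h$ from Section~\ref{sec:fem_state} is well defined and single valued on this set. We also take $\rho$ so small that $\bar{\mathbf{u}}$ is the unique minimizer of $J$ on $\mathbf{U}_{ad}\cap\bar B_\rho(\bar{\mathbf{u}})$. This is possible because $\bar{\mathbf{u}}$ is a strict local minimizer by Theorem~\ref{strictilocalopt}, and on the bounded set $\mathbf{U}_{ad}$ the $\mathbf{L}^1$ and $\mathbf{L}^2$ topologies give comparable neighborhoods.

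\textbf{Step 1: existence for the localized problem.} Take a minimizing sequence $\boldsymbol{u}_k$ in the set $\mathbf{U}_{ad}\cap\bar B_\rho(\bar{\mathbf{u}})$. This set is convex, closed and bounded, so a subsequence converges weakly in $\mathbf{L}^2$ to some $\boldsymbol{u}$ in the same set. Since $\mathbf{V}_h$ is finite dimensional, $(\boldsymbol{u}_k,\mathbf{v}_h)\to(\boldsymbol{u},\mathbf{v}_h)$ for every $\mathbf{v}_h$. The discrete states $\mathbf{y}_h(\boldsymbol{u}_k)$ lie in $B_{\mathfrak{l}}(\bar{\mathbf{y}})$, so they are bounded in a finite-dimensional space and a subsequence converges strongly. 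Passing to the limit in \eqref{eq:discrete_state_eq_semi} gives a discrete solution for $\boldsymbol{u}$ lying in the closure of $B_{\mathfrak{l}}(\bar{\mathbf{y}})\times B_{\mathfrak{l}}(\bar p)$. If necessary we shrink $\mathfrak{l}$ slightly, or use the local uniqueness radius from \cite[Theorem 4.8]{MR2338434} with some margin, so that this limit coincides with $\mathcal{S}_h(\boldsymbol{u})$. Continuity of the cost then shows that $\boldsymbol{u}$ minimizes. Call this minimizer $\bar{\boldsymbol{u}}_h$.

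\textbf{Step 2: the constraint is inactive for small $h$.} We argue by contradiction: suppose there is a sequence $h\to0$ with $\|\bar{\boldsymbol{u}}_h-\bar{\mathbf{u}}\|_{\mathbf{L}^2}=\rho$.
\begin{itemize}
\item Extract a subsequence with $\bar{\boldsymbol{u}}_h\rightharpoonup\tilde{\mathbf{u}}$ in $\mathbf{L}^2$, where $\tilde{\mathbf{u}}$ lies in the constraint set.
\item The discrete solutions are uniformly bounded in $\mathbf{H}_0^1$, and standard finite element consistency together with compactness of $\mathbf{H}^1_0\hookrightarrow\mathbf{L}^4$ gives $\mathbf{y}_h(\bar{\boldsymbol{u}}_h)\to \mathbf{y}_{\tilde{\mathbf{u}}}$ strongly in $\mathbf{L}^2$. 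The limit is identified with $\mathcal{S}(\tilde{\mathbf{u}})$ through the local uniqueness in Theorem~\ref{thm:properties_C_to_S}.
\item We also use the convergence $\mathcal{J}_h(\bar{\mathbf{u}})\to J(\bar{\mathbf{u}})$, which holds by the a priori estimate $\|\bar{\mathbf{y}}-\mathbf{y}_h(\bar{\mathbf{u}})\|_{\mathbf{L}^2}\to0$ from \cite{MR2338434}.
\end{itemize}
Combining these,
\[
J(\tilde{\mathbf{u}})\le\liminf \mathcal{J}_h(\bar{\boldsymbol{u}}_h)\le\limsup\mathcal{J}_h(\bar{\mathbf{u}})=J(\bar{\mathbf{u}}).
\]
Uniqueness of the minimizer on the ball then forces $\tilde{\mathbf{u}}=\bar{\mathbf{u}}$ and $\mathcal{J}_h(\bar{\boldsymbol{u}}_h)\to J(\bar{\mathbf{u}})$.

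Strong convergence of the controls does not follow from this, since the problem has no Tikhonov term. It is recovered from the growth condition instead. Strong $\mathbf{L}^2$ convergence of the states gives $J(\bar{\boldsymbol{u}}_h)-\mathcal{J}_h(\bar{\boldsymbol{u}}_h)\to0$. Together with $\mathcal{J}_h(\bar{\boldsymbol{u}}_h)\to J(\bar{\mathbf{u}})$ this yields $J(\bar{\boldsymbol{u}}_h)\to J(\bar{\mathbf{u}})$. Theorem~\ref{strictilocalopt} then gives $\|\bar{\boldsymbol{u}}_h-\bar{\mathbf{u}}\|_{\mathbf{L}^1}\to0$, and by $\mathbf{L}^\infty$ boundedness also $\|\bar{\boldsymbol{u}}_h-\bar{\mathbf{u}}\|_{\mathbf{L}^2}\to0$, contradicting $\|\bar{\boldsymbol{u}}_h-\bar{\mathbf{u}}\|_{\mathbf{L}^2}=\rho$.

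Theorem~\ref{strictilocalopt} only applies while $\|\bar{\boldsymbol{u}}_h-\bar{\mathbf{u}}\|_{\mathbf{L}^1}\le\delta$. We ensure this by taking $\rho\le\delta/|\Omega|^{1/2}$. With that choice, $\bar{\boldsymbol{u}}_h$ lies in the open ball for all $h<h_*$. Hence it is a local minimizer of the semidiscrete problem, which is the claimed local solution.

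\textbf{Main obstacle.} The delicate point is the uniform-in-$h$ convergence of the discrete states for weakly converging controls, i.e. the statement that $\mathbf{y}_h(\boldsymbol{u}_h)\to\mathbf{y}_{\mathbf{u}}$ in $\mathbf{L}^2$ whenever $\boldsymbol{u}_h\rightharpoonup\mathbf{u}$ within the ball. My plan is to split
\[
\mathbf{y}_h(\boldsymbol{u}_h)-\mathbf{y}_{\mathbf{u}}=\bigl(\mathbf{y}_h(\boldsymbol{u}_h)-\mathbf{y}_{\boldsymbol{u}_h}\bigr)+\bigl(\mathbf{y}_{\boldsymbol{u}_h}-\mathbf{y}_{\mathbf{u}}\bigr).
\]
The first term is bounded by $Ch$ uniformly over the ball, using the error estimates of \cite{MR2338434} near a nonsingular branch. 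The second term tends to zero by compactness of the solution operator $\mathcal{S}$ from weak $\mathbf{L}^2$ to strong $\mathbf{H}^1_0$, which follows from $\mathbf{L}^2\hookrightarrow\mathbf{H}^{-1}$ being compact and $\mathcal{S}$ being $C^1$ near $\bar{\mathbf{u}}$.
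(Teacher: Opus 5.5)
Your Step 1 is essentially the paper's proof. The paper only defers to the localization argument of \cite[Theorem 4.11]{MR2338434}, and your weak-compactness argument is the natural adaptation of it to undiscretized controls. It uses that $\mathcal{J}_h$ depends on $\boldsymbol{u}$ only through the finite-dimensional functional $\mathbf{v}_h\mapsto(\boldsymbol{u},\mathbf{v}_h)$. Take $\rho<\mathfrak{s}$, and keep a margin inside the uniqueness ball so that the limit state is indeed $\mathcal{S}_h(\boldsymbol{u})$; shrinking $\mathfrak{l}$ by itself does not give this.

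That already proves the statement as the paper uses it, namely existence of a minimizer of the localized problem \eqref{eq:discrete_local_problem}. Your Step 2, and the choice of $\rho$ making $\bar{\mathbf{u}}$ the unique minimizer on the ball, rest on Theorem~\ref{strictilocalopt}. That means they need strictness and the growth condition \eqref{controlgrowth}, neither of which is a hypothesis here. This material belongs to Theorem~\ref{thm:convergence_of_sol}, where those assumptions are imposed. So drop it from this proof, or state the extra hypotheses explicitly.
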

\begin{proof}
The existence of a feasible pair is direct. The existence of a solution to the semidiscrete problem follows from the arguments given in the proof of \cite[Theorem 4.11]{MR2338434}. For brevity, we skip the details.
\end{proof}

In what follows, we will write $\bar{\boldsymbol{u}}_{h}$ to denote a solution to the semidiscrete problem.

Let $\bar{\mathbf{u}}$ be a local nonsingular solution to \eqref{eq:minimize_cost_func}. Let $\mathfrak{s},\mathfrak{l}> 0$, $B_{\mathfrak{s}}(\bar{\mathbf{u}})$, $B_{\mathfrak{l}}(\bar{\mathbf{y}})$, $B_{\mathfrak{l}}(\bar{p})$, and $\mathcal{S}_{h}$ be given as in Section \ref{sec:fem_state}.
We consider the control problem
\begin{equation}\label{eq:discrete_local_problem}
\text{inf}\{ J_{h}(\mathbf{u}) : ~ \mathbf{u}\in \mathbf{U}_{ad}\cap B_{\mathfrak{s}}(\bar{\mathbf{u}})\},    
\end{equation}
where $J_{h}: \mathbf{U}_{ad} \cap B_{\mathfrak{s}}(\bar{\mathbf{u}}) \to \mathbb{R}$ is defined by $J_h(\mathbf{u}) := \frac{1}{2}\|\mathbf{y}_{h}-\mathbf{y}_\Omega\|_{\mathbf{L}^{2}(\Omega)}^2$ with $(\mathbf{y}_{h}, p_{h}) = \mathcal{S}_{h}(\mathbf{u})$. 

In view of Theorem \ref{thm:exist_and_conv_sol_var}, we infer that the problem \eqref{eq:discrete_local_problem} has at least one solution $\bar{\boldsymbol{u}}_{h}$. 
Moreover, the discrete control $\bar{\boldsymbol{u}}_{h}$ satisfies
\begin{equation}\label{eq:discrete_var_ineq}
J_{h}(\bar{\boldsymbol{u}}_{h})(\mathbf{u} - \bar{\boldsymbol{u}}_{h}) =  (\bar{\mathbf{z}}_{h},\mathbf{u} - \bar{\boldsymbol{u}}_{h})^{}_{\mathbf{L}^2(\Omega)}  \geq  0 \quad \forall
  \mathbf{u} \in \mathbf{U}_{ad},
\end{equation}
where the pair $(\bar{\mathbf{z}}_{h},\bar{r}_{h}) \in \mathbf{V}_{h}\times Q_{h}$ solves
\begin{align}\label{eq:discrete_adjoint_equation}
&\nu(\nabla \mathbf{w}_{h}, \nabla \bar{\mathbf{z}}_{h})_{\mathbf{L}^2(\Omega)}+b(\bar{\mathbf{y}}_{h};\mathbf{w}_{h},\bar{\mathbf{z}}_{h}) +b(\mathbf{w}_{h};\bar{\mathbf{y}}_{h},\bar{\mathbf{z}}_{h}) \\
&-(\bar{r}_{h},\text{div } \mathbf{w}_{h})_{\Omega} =(\bar{\mathbf{y}}_{h}-\mathbf{y}_\Omega,\mathbf{w}_{h})_{\mathbf{L}^2(\Omega)}, \qquad (s_{h},\text{div } \bar{\mathbf{z}}_{h})_{\Omega} = 0,\nonumber
\end{align}
for all $(\mathbf{w}_{h},s_{h}) \in \mathbf{V}_h\times Q_{h}$.
Here, $(\bar{\mathbf{y}}_{h},\bar{p}_{h}) \in \mathbf{V}_{h}\times Q_{h}$ denotes the unique solution to \eqref{eq:discrete_state_eq_semi} with $\boldsymbol{u} = \bar{\boldsymbol{u}}_h$. 
We note that, for every $h>0$ small enough, problem \eqref{eq:discrete_var_ineq} is well posed; see \cite[Lemma 4.12]{MR2338434}.

\begin{theorem}[convergence of solutions]\label{thm:convergence_of_sol}
Let $\bar{\mathbf{u}}\in \mathbf{U}_{ad}$ be a strict local nonsingular solution to \eqref{eq:minimize_cost_func} satisfying the growth condition \eqref{controlgrowth}. 
Then, there exists a sequence $\{\bar{\boldsymbol{u}}_{h}\}_{h>0}$ of local minima of the semidiscrete problem such that $\bar{\boldsymbol{u}}_{h}\to \bar{\mathbf{u}}$ in $\mathbf{L}^{1}(\Omega)$.
\begin{proof}
On the one hand, the fact that $\bar{\mathbf{u}}$ is a strict local nonsingular solution to \eqref{eq:minimize_cost_func}, implies that there exists $\varepsilon > 0$ such that the problem 
\[
\min\{ J(\mathbf{u}): \mathbf{u} \in\mathbf{U}_{ad}\cap B_{\varepsilon}(\bar{\mathbf{u}})\}, \qquad B_{\varepsilon}(\bar{\mathbf{u}}):=\{ \mathbf{u} \in \mathbf{L}^1(\Omega): \|\bar{\mathbf{u}}-\mathbf{u}\|_{\mathbf{L}^1(\Omega)}\leq \varepsilon\},
\]
admits $\bar{\mathbf{u}}$ as a unique solution. 
On the other hand, we consider the problem: 
Find min$\{J_{h}(\mathbf{u}): \mathbf{u}\in \mathbf{U}_{ad}\cap B_{\varepsilon}(\bar{\mathbf{u}})\}$.
In view of Theorem \ref{thm:exist_and_conv_sol_var}, this problem admits a global solution $\bar{\boldsymbol{u}}_{h} \in \mathbf{U}_{ad}$.
Moreover, there exists a subsequence $\{\bar{\boldsymbol{u}}_h\}_{h > 0}$ such that $\bar{\boldsymbol{u}}_{h} \rightharpoonup \bar{\mathbf{u}}$ weakly$^{*}$ in $\mathbf{L}^{\infty}(\Omega)$ to a solution of $\min\{ J(\mathbf{u}): \mathbf{u} \in\mathbf{U}_{ad}\cap B_{\varepsilon}(\bar{\mathbf{u}})\}$. Since the latter problem admits a unique solution $\bar{\mathbf{u}}$, we must have $\bar{\boldsymbol{u}}_{h} \rightharpoonup \bar{\mathbf{u}}$ weakly$^{*}$ in $\mathbf{L}^{\infty}(\Omega)$ as $h \downarrow 0$. 
Since $\bar{\mathbf{u}}$ satisfies the growth condition \eqref{controlgrowth}, we infer that $\bar{\mathbf{u}}$ corresponds to a bang-bang solution.
Therefore, the convergence $\bar{\boldsymbol{u}}_{h} \rightharpoonup \bar{\mathbf{u}}$ weakly$^{*}$ in $\mathbf{L}^{\infty}(\Omega)$  implies that $\bar{\boldsymbol{u}}_{h}\to \bar{\mathbf{u}}$ in $\mathbf{L}^{1}(\Omega)$ (see \cite[Lemma 4.2]{zbMATH07643482}). 
From this we observe that $\bar{\boldsymbol{u}}_{h}\in B_{\mathfrak{s}}(\bar{\mathbf{u}})$ when $h$ is small enough, which yields that $\bar{\boldsymbol{u}}_{h}$ is a local minimum of the semidiscrete optimal control problem.
This concludes the proof.
\end{proof}
    
\end{theorem}


\subsection{Auxiliary adjoint equations}

Let $\bar{\boldsymbol{u}}_{h}$ be an element of the sequence $\{\bar{\boldsymbol{u}}_{h}\}_{h>0}$ given as in Theorem \ref{thm:convergence_of_sol} and such that $\bar{\boldsymbol{u}}_{h}\in \mathcal{O}(\bar{\mathbf{u}})$ (cf. Theorem \ref{thm:properties_C_to_S}).
We introduce the auxiliary variable $(\mathbf{z}_{\bar{\boldsymbol{u}}_{h}},r_{\bar{\boldsymbol{u}}_{h}}) \in \mathbf{H}_0^1(\Omega)\times L_0^{2}(\Omega)$ as the unique solution to
\begin{align}\label{eq:aux_z_uh}
&\nu(\nabla \mathbf{w}, \nabla \mathbf{z}_{\bar{\boldsymbol{u}}_{h}})_{\mathbf{L}^2(\Omega)} + b(\mathbf{y}_{\bar{\boldsymbol{u}}_{h}};\mathbf{w},\mathbf{z}_{\bar{\boldsymbol{u}}_{h}}) + b(\mathbf{w};\mathbf{y}_{\bar{\boldsymbol{u}}_{h}},\mathbf{z}_{\bar{\boldsymbol{u}}_{h}}) \\
&- (r_{\bar{\boldsymbol{u}}_{h}},\text{div } \mathbf{w})_{\Omega}   = (\mathbf{y}_{\bar{\boldsymbol{u}}_{h}} - \mathbf{y}_\Omega,\mathbf{w})_{\mathbf{L}^2(\Omega)}, 
\qquad
(s,\text{div } \mathbf{z}_{\bar{\boldsymbol{u}}_{h}})_{\Omega} =   0, \nonumber
\end{align}
for all $(\mathbf{w},s) \in \mathbf{H}_0^1(\Omega)\times L_0^2(\Omega)$, where $\mathbf{y}_{\bar{\boldsymbol{u}}_{h}}\in \mathbf{H}_0^1(\Omega)$ corresponds to the unique solution to \eqref{eq:weak_navier_stokes_eq} with $\mathbf{f}=\bar{\boldsymbol{u}}_{h}$. 
We recall that, when $\bar{\boldsymbol{u}}_{h}$ is sufficiently close to $\bar{\mathbf{u}}$, then $\mathbf{y}_{\bar{\boldsymbol{u}}_{h}}$ is regular. Consequently, the problem \eqref{eq:aux_z_uh} is well posed.
We let $(\hat{\mathbf{z}}_{h},\hat{r}_h)\in \mathbf{V}_{h}\times Q_h$ be its corresponding finite element approximation, i.e., this pair solves
\begin{equation}
\label{eq:discrete_adj_aux}
\begin{array}{rl}
&\nu(\nabla \mathbf{w}_{h}, \nabla \hat{\mathbf{z}}_{h})_{\mathbf{L}^2(\Omega)} + b(\mathbf{y}_{\bar{\boldsymbol{u}}_{h}};\mathbf{w}_{h},\hat{\mathbf{z}}_{h}) + b(\mathbf{w}_{h};\mathbf{y}_{\bar{\boldsymbol{u}}_{h}},\hat{\mathbf{z}}_{h}) \\
 & \quad \quad -  (\hat{r}_{h},\text{div } \mathbf{w}_{h})_{\Omega} = (\mathbf{y}_{\bar{\boldsymbol{u}}_{h}} - \mathbf{y}_\Omega,\mathbf{w}_{h})_{\mathbf{L}^2(\Omega)},   \quad
(s_{h},\text{div } \hat{\mathbf{z}}_{h})_{\Omega}  =   0, 
\end{array}
\end{equation}
for all $(\mathbf{w}_h,s_h) \in \mathbf{V}_{h}\times Q_{h}$. 
The fact that $\mathbf{y}_{\bar{\boldsymbol{u}}_{h}}$ is regular implies that the problem \eqref{eq:discrete_adj_aux} is well posed; see \cite[Lemma 4.12]{MR2338434}. 
Moreover, the fact that $\mathbf{z}_{\bar{\boldsymbol{u}}_{h}}\in \mathbf{H}^{2}(\Omega)$, standard interpolation and inverse estimates, and \cite[eq. (4.12)]{MR2338434}, allow us to arrive at
\begin{equation}\label{eq:hat_z_W14}
    \|\nabla \hat{\mathbf{z}}_{h}\|_{\mathbf{L}^{4}(\Omega)}
    \leq
    \|\nabla (\hat{\mathbf{z}}_{h} - \mathbf{z}_{\bar{\boldsymbol{u}}_{h}})\|_{\mathbf{L}^{4}(\Omega)} + \|\nabla \mathbf{z}_{\bar{\boldsymbol{u}}_{h}}\|_{\mathbf{L}^{4}(\Omega)}
    \lesssim
    h^{1-\frac{n}{4}} + C
    \lesssim 1.
\end{equation}

In the next result, we establish an approximation result in the maximum norm.

\begin{proposition}[maximum-norm error estimate]\label{prop:max_norm_estimate_zz}
    Let $(\mathbf{z}_{\bar{\boldsymbol{u}}_{h}},r_{\bar{\boldsymbol{u}}_{h}}) \in \mathbf{H}_0^1(\Omega)\times L_0^{2}(\Omega)$ be the unique solution to problem \eqref{eq:aux_z_uh} and let $(\hat{\mathbf{z}}_{h},\hat{r}_h)\in \mathbf{V}_{h}\times Q_h$ be its finite element approximation obtained as the solution to \eqref{eq:discrete_adj_aux}.
    Assume that $\mathbf{y}_{\Omega} \in \mathbf{L}^{\mathsf{p}}(\Omega)$ with $\mathsf{p}>n$ sufficiently close to $n$.
    Then, we have  
    \begin{align*}
        \|\mathbf{z}_{\bar{\boldsymbol{u}}_{h}} - \hat{\mathbf{z}}_{h}\|_{\mathbf{L}^{\infty}(\Omega)}
        \lesssim
        h|\log h|.
    \end{align*}
    \end{proposition}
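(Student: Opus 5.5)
We prove the estimate by reducing it to a known maximum-norm error estimate for the Taylor--Hood discretization of the Stokes problem, treating the convective terms as data. The natural tool is the weighted/local $L^\infty$ stability of the discrete Stokes projection on convex polytopes (e.g., Guzmán--Leykekhman, Behringer--Leykekhman--Vexler). On quasi-uniform meshes it gives, for the Stokes Ritz projection $(\mathbf{R}_h\mathbf{z},\mathbf{R}_h r)\in\mathbf{V}_h\times Q_h$ of a pair $(\mathbf{z},r)$,
\[
\|\mathbf{z}-\mathbf{R}_h\mathbf{z}\|_{\mathbf{L}^{\infty}(\Omega)} \lesssim |\log h| \inf_{\mathbf{v}_h\in\mathbf{V}_h}\|\mathbf{z}-\mathbf{v}_h\|_{\mathbf{L}^{\infty}(\Omega)} + h|\log h|\,\|r\|_{L^\infty(\Omega)}\text{-type terms}.
\]
More simply, the projection is stable in $\mathbf{W}^{1,\infty}$ up to a logarithm. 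Combined with the regularity $(\mathbf{z}_{\bar{\boldsymbol{u}}_h},r_{\bar{\boldsymbol{u}}_h})\in\mathbf{C}^{1,\alpha}(\bar\Omega)\times C^{0,\alpha}(\bar\Omega)$ from \eqref{eq:estimate_z_max_norm}, this yields $\|\mathbf{z}-\mathbf{R}_h\mathbf{z}\|_{\mathbf{L}^\infty}\lesssim h|\log h|(\|\nabla\mathbf{z}\|_{\mathbf{L}^\infty}+\|r\|_{L^\infty})$. Here $\|\nabla\mathbf{z}_{\bar{\boldsymbol{u}}_h}\|_{\mathbf{L}^\infty}$ is uniformly bounded because $\mathbf{y}_{\bar{\boldsymbol{u}}_h}-\mathbf{y}_\Omega\in\mathbf{L}^{\mathsf{p}}$ uniformly: $\mathbf{y}_{\bar{\boldsymbol{u}}_h}$ is bounded in $\mathbf{H}^2$, and $\mathbf{y}_\Omega\in\mathbf{L}^{\mathsf p}$ by assumption. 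The pressure is likewise bounded in $L^\infty$ by the $C^{0,\alpha}$ bound.

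With this in hand, we split $\mathbf{z}_{\bar{\boldsymbol{u}}_h}-\hat{\mathbf{z}}_h=(\mathbf{z}-\mathbf{R}_h\mathbf{z})+(\mathbf{R}_h\mathbf{z}-\hat{\mathbf{z}}_h)=:\boldsymbol\rho+\mathbf{e}_h$. Subtracting \eqref{eq:discrete_adj_aux} from \eqref{eq:aux_z_uh} shows that $\mathbf{e}_h$, with its pressure part, is the discrete Stokes solution with right-hand side
\[
\mathbf{w}_h\mapsto -b(\mathbf{y}_{\bar{\boldsymbol{u}}_h};\mathbf{w}_h,\mathbf{z}-\hat{\mathbf{z}}_h)-b(\mathbf{w}_h;\mathbf{y}_{\bar{\boldsymbol{u}}_h},\mathbf{z}-\hat{\mathbf{z}}_h).
\]
Equivalently, $\mathbf{e}_h$ is the discrete linearized-adjoint solution with data given only by the convective terms applied to $\boldsymbol\rho$. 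Since $\mathbf{y}:=\mathbf{y}_{\bar{\boldsymbol{u}}_h}$ is regular and close to $\bar{\mathbf{y}}$, the discrete linearized operator is uniformly stable (\cite[Lemma 4.12]{MR2338434}).

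The key step is a discrete $L^\infty$ bound of the form $\|\mathbf{e}_h\|_{\mathbf{L}^\infty}\lesssim|\log h|\,\|\mathbf{F}\|_{\mathbf{W}^{-1,\mathsf q}}$ for the discrete linearized adjoint with data $\mathbf{F}$, or alternatively a duality argument. I would try the simpler route first: use the inverse estimate $\|\mathbf{e}_h\|_{\mathbf{L}^\infty}\lesssim |\log h|^{1/2}\|\nabla\mathbf{e}_h\|_{\mathbf{L}^2}$ for $n=2$, and a discrete Sobolev-type bound $\|\mathbf{e}_h\|_{\mathbf{L}^\infty}\lesssim \|\mathbf{e}_h\|_{\mathbf{W}^{1,\mathsf q}}$ with $\mathsf q>n$ for $n=3$. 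Both reduce the problem to estimating $\boldsymbol\rho$ in a weaker norm. By discrete $\mathbf{W}^{1,\mathsf q}$ stability of the linearized problem, the data is controlled by
\[
\|\mathbf{y}\|_{\mathbf{L}^\infty}\|\boldsymbol\rho\|_{\mathbf{L}^{\mathsf q}}+\|\nabla\mathbf{y}\|_{\mathbf{L}^{\mathsf p}}\|\boldsymbol\rho\|_{\mathbf{L}^{s}},
\]
after integrating by parts in the convective form using \eqref{eq:properties_trilinear}, so that no derivative falls on $\boldsymbol\rho$. Standard $L^{\mathsf q}$ error estimates for Taylor--Hood with $\mathbf{z}\in\mathbf{W}^{2,\mathsf q}$ give $\|\boldsymbol\rho\|_{\mathbf{L}^{\mathsf q}}\lesssim h^{2}$ up to logarithms, or at least $h^{1+\epsilon}$, so $\mathbf{e}_h$ is of higher order.

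The main obstacle is the discrete $\mathbf{W}^{1,\mathsf q}$ (or $L^\infty$) stability of the discrete linearized Navier--Stokes adjoint operator, since available results treat the pure Stokes projection. To obtain it, I would use a perturbation argument. Write the discrete adjoint operator as the discrete Stokes operator plus the compact convective part. The convective part maps $\mathbf{W}^{1,\mathsf q}$ into $\mathbf{L}^{\mathsf p}$, whose image is smooth. Uniform invertibility for small $h$ then follows from the continuous well-posedness and the convergence of the discrete Stokes solution operator in $\mathcal{L}(\mathbf{W}^{-1,\mathsf q},\mathbf{W}^{1,\mathsf q})$, by the classical argument of Schatz/Rannacher type. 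Combining the $\boldsymbol\rho$ and $\mathbf{e}_h$ bounds then gives $\|\mathbf{z}_{\bar{\boldsymbol{u}}_h}-\hat{\mathbf{z}}_h\|_{\mathbf{L}^\infty}\lesssim h|\log h|$.
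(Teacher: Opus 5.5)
Your Ritz-projection splitting $\boldsymbol\rho+\mathbf{e}_h$ is a valid route to $h|\log h|$, but it differs from the paper's. The step you flag as the main obstacle is not needed.

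The paper never forms an error equation for the linearized operator. Instead it moves the convective terms into Stokes data through three auxiliary problems:
\begin{itemize}
\item $(\boldsymbol{\Psi},\xi)$, with data $b(\mathbf{y}_{\bar{\boldsymbol{u}}_h};\cdot,\mathbf{z}_{\bar{\boldsymbol{u}}_h})+b(\cdot;\mathbf{y}_{\bar{\boldsymbol{u}}_h},\mathbf{z}_{\bar{\boldsymbol{u}}_h})$;
\item $\widehat{\boldsymbol{\Psi}}_h$, whose data is built from $\hat{\mathbf{z}}_h$;
\item the intermediate continuous problem $\widehat{\boldsymbol{\Psi}}$.
\end{itemize}
With these, $\mathbf{K}=\boldsymbol{\Psi}+\mathbf{z}_{\bar{\boldsymbol{u}}_h}$ and $\widehat{\mathbf{K}}_h=\widehat{\boldsymbol{\Psi}}_h+\hat{\mathbf{z}}_h$ are exactly a pure Stokes solution and its Taylor--Hood approximation. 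The Stokes max-norm estimate of \cite{MR4101369} is then applied twice: once to $\mathbf{K}$, and once to $\widehat{\boldsymbol{\Psi}}$, the latter via \eqref{eq:hat_z_W14}. The remaining term $\|\widehat{\boldsymbol{\Psi}}-\boldsymbol{\Psi}\|_{\mathbf{L}^\infty(\Omega)}$ follows from $H^2$-regularity and the energy error $\|\nabla(\mathbf{z}_{\bar{\boldsymbol{u}}_h}-\hat{\mathbf{z}}_h)\|_{\mathbf{L}^2(\Omega)}\lesssim h$. This treats $n=2,3$ uniformly and needs nothing about the discrete linearized operator beyond the energy estimate. Your splitting uses the Stokes max-norm estimate only once, for $\boldsymbol\rho$, but then has to control $\mathbf{e}_h$.

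That control can be done entirely in $\mathbf{H}^1$, in both dimensions. Write $\mathbf{y}=\mathbf{y}_{\bar{\boldsymbol{u}}_h}$.
\begin{itemize}
\item The data terms satisfy $|b(\mathbf{y};\mathbf{w}_h,\boldsymbol\rho)|\le\|\mathbf{y}\|_{\mathbf{L}^\infty(\Omega)}\|\nabla\mathbf{w}_h\|_{\mathbf{L}^2(\Omega)}\|\boldsymbol\rho\|_{\mathbf{L}^2(\Omega)}$ and $|b(\mathbf{w}_h;\mathbf{y},\boldsymbol\rho)|\le\|\mathbf{w}_h\|_{\mathbf{L}^6(\Omega)}\|\nabla\mathbf{y}\|_{\mathbf{L}^3(\Omega)}\|\boldsymbol\rho\|_{\mathbf{L}^2(\Omega)}$.
\item Uniform discrete stability \cite[Lemma 4.12]{MR2338434} then gives $\|\nabla\mathbf{e}_h\|_{\mathbf{L}^2(\Omega)}\lesssim\|\boldsymbol\rho\|_{\mathbf{L}^2(\Omega)}$.
\item The Aubin--Nitsche estimate for the Stokes projection on the convex domain gives $\|\boldsymbol\rho\|_{\mathbf{L}^2(\Omega)}\lesssim h^2(\|\mathbf{z}_{\bar{\boldsymbol{u}}_h}\|_{\mathbf{H}^2(\Omega)}+\|r_{\bar{\boldsymbol{u}}_h}\|_{H^1(\Omega)})$.
\item The inverse inequality behind \eqref{eq:max_h1}, which the paper itself uses in Theorem \ref{thm:estimate_control}, yields $\|\mathbf{e}_h\|_{\mathbf{L}^\infty(\Omega)}\lesssim h^{3/2}$ for $n=3$ and $\lesssim(1+|\log h|)h^2$ for $n=2$.
\end{itemize}
The $\mathbf{L}^2$-superconvergence of $\boldsymbol\rho$ absorbs the $h^{-1/2}$ loss, so no discrete $\mathbf{W}^{1,\mathsf{q}}$ theory is required. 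With this change your argument is complete and arguably shorter than the paper's.

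If you keep the $\mathbf{W}^{1,\mathsf{q}}$ detour, fix three points:
\begin{itemize}
\item Discrete Stokes solution operators do not converge in the operator norm of $\mathcal{L}(\mathbf{W}^{-1,\mathsf{q}}(\Omega),\mathbf{W}^{1,\mathsf{q}}(\Omega))$. They converge only strongly with uniform bounds; combined with compactness of the convective perturbation, that is what a Schatz-type contradiction argument actually uses.
\item The argument imports uniform discrete $\mathbf{W}^{1,\mathsf{q}}$-stability of Taylor--Hood for some $\mathsf{q}>3$ on convex polyhedra, which is an extra ingredient.
\item $\mathbf{W}^{2,\mathsf{q}}$-regularity for $\mathsf{q}>n$ is not available on general convex polytopes. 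You do not need it, since $\|\boldsymbol\rho\|_{\mathbf{L}^{\mathsf{q}}(\Omega)}\lesssim\|\boldsymbol\rho\|_{\mathbf{L}^\infty(\Omega)}\lesssim h|\log h|$ already suffices.
\end{itemize}
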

    \begin{proof}
    Let $\mathbf{F}\in \mathbf{H}^{-1}(\Omega)$ be given by $\langle\mathbf{F},\mathbf{w}\rangle_{\mathbf{H}^{-1}(\Omega),\mathbf{H}_0^{1}(\Omega)} = b(\mathbf{y}_{\bar{\boldsymbol{u}}_{h}}; \mathbf{w},\mathbf{z}_{\bar{\boldsymbol{u}}_{h}}) + b(\mathbf{w}; \mathbf{y}_{\bar{\boldsymbol{u}}_{h}},\mathbf{z}_{\bar{\boldsymbol{u}}_{h}})$, and let $(\boldsymbol{\Psi},\xi) \in \mathbf{H}_0^{1}(\Omega)\times L_0^2(\Omega)$ be the unique solution to 
    \[
     \nu(\nabla \mathbf{w},\nabla \boldsymbol{\Psi})_{\mathbf{L}^{2}(\Omega)} - (\xi, \text{div }\mathbf{w})_{\Omega}  = \langle\mathbf{F},\mathbf{w}\rangle_{\mathbf{H}^{-1}(\Omega),\mathbf{H}_0^{1}(\Omega)}, \quad    (s, \text{div }\boldsymbol{\Psi})_{\Omega}  = 0,
    \]
    for all $(\mathbf{w},s) \in \mathbf{H}_0^1(\Omega)\times L_0^2(\Omega)$.
    We prove that $(\boldsymbol{\Psi},\xi) \in \mathbf{C}^{1,\alpha}(\bar{\Omega})\times C^{0,\alpha}(\bar{\Omega})$ for some $\alpha\in (0,1)$.
    To accomplish this task, it suffices to prove that $\mathbf{F}\in \mathbf{L}^{r}(\Omega)$ for some $r > n$ sufficiently close to $n$; see \cite[Section 1.3]{MR3422453} and \cite[Lemma 14]{MR3008832}.
    Using \eqref{eq:properties_trilinear} and the fact that div $\mathbf{y}_{\bar{\boldsymbol{u}}_{h}}=0$ and $\mathbf{y}_{\bar{\boldsymbol{u}}_{h}},\mathbf{z}_{\bar{\boldsymbol{u}}_{h}}\in \mathbf{H}^{2}(\Omega)$, we have
\begin{align*}
    |b(\mathbf{y}_{\bar{\boldsymbol{u}}_{h}}; \mathbf{w},\mathbf{z}_{\bar{\boldsymbol{u}}_{h}})|
     =
    |b(\mathbf{y}_{\bar{\boldsymbol{u}}_{h}};\mathbf{z}_{\bar{\boldsymbol{u}}_{h}},\mathbf{w})|
    \leq & \,
    \|\mathbf{y}_{\bar{\boldsymbol{u}}_{h}}\|_{\mathbf{L}^{\infty}(\Omega)}\|\nabla \mathbf{z}_{\bar{\boldsymbol{u}}_{h}}\|_{\mathbf{L}^{r}(\Omega)}\|\mathbf{w}\|_{\mathbf{L}^{r^{\prime}}(\Omega)}\\
    \lesssim & \, \|\mathbf{y}_{\bar{\boldsymbol{u}}_{h}}\|_{\mathbf{L}^{\infty}(\Omega)}\|\nabla \mathbf{z}_{\bar{\boldsymbol{u}}_{h}}\|_{\mathbf{L}^{4}(\Omega)}\|\mathbf{w}\|_{\mathbf{L}^{r^{\prime}}(\Omega)}.
\end{align*}    
    Similarly, we obtain  
\begin{align*}
    |b(\mathbf{w}; \mathbf{y}_{\bar{\boldsymbol{u}}_{h}},\mathbf{z}_{\bar{\boldsymbol{u}}_{h}})|
    \leq & \,
    \|\mathbf{w}\|_{\mathbf{L}^{r^{\prime}}(\Omega)}\|\nabla \mathbf{y}_{\bar{\boldsymbol{u}}_{h}}\|_{\mathbf{L}^{r}(\Omega)}\|\mathbf{z}_{\bar{\boldsymbol{u}}_{h}}\|_{\mathbf{L}^{\infty}(\Omega)}\\
    \lesssim & \,
    \|\mathbf{w}\|_{\mathbf{L}^{r^{\prime}}(\Omega)}\|\nabla \mathbf{y}_{\bar{\boldsymbol{u}}_{h}}\|_{\mathbf{L}^{4}(\Omega)}\|\mathbf{z}_{\bar{\boldsymbol{u}}_{h}}\|_{\mathbf{L}^{\infty}(\Omega)}.
\end{align*}   
Hence, $\mathbf{F}\in \mathbf{L}^{r}(\Omega)$ with $r>n$ and thus $(\boldsymbol{\Psi},\xi) \in \mathbf{C}^{1,\alpha}(\bar{\Omega})\times C^{0,\alpha}(\bar{\Omega})$ satisfying $\|\nabla \boldsymbol{\Psi}\|_{\mathbf{L}^{\infty}(\Omega)} + \|\xi\|_{\mathbf{L}^{\infty}(\Omega)} \lesssim \|\nabla \mathbf{y}_{\bar{\boldsymbol{u}}_{h}}\|_{\mathbf{L}^{4}(\Omega)}\|\nabla \mathbf{z}_{\bar{\boldsymbol{u}}_{h}}\|_{\mathbf{L}^{4}(\Omega)}$.
We note immediately that an analogous argument can be used to prove, in view of the regularity $\mathbf{y}_\Omega \in \mathbf{L}^{\mathsf{p}}(\Omega)$ with $\mathsf{p}>n$ sufficiently close to $n$, that $(\mathbf{z}_{\bar{\boldsymbol{u}}_{h}},r_{\bar{\boldsymbol{u}}_{h}}) \in \mathbf{C}^{1,\alpha}(\bar{\Omega})\times C^{0,\alpha}(\bar{\Omega})$; cf. equation \eqref{eq:estimate_z_max_norm}.

We also introduce the pair $(\widehat{\boldsymbol{\Psi}}_{h},\hat{\xi}_{h})\in \mathbf{V}_{h}\times Q_{h}$ as the unique solution to 
    \begin{align*}
     \nu(\nabla \mathbf{w}_{h},\nabla \widehat{\boldsymbol{\Psi}}_{h})_{\mathbf{L}^{2}(\Omega)} - (\hat{\xi}_{h}, \text{div }\mathbf{w}_{h})_{\Omega} & = b(\mathbf{y}_{\bar{\boldsymbol{u}}_{h}}; \mathbf{w}_{h},\hat{\mathbf{z}}_{h}) + b(\mathbf{w}_{h}; \mathbf{y}_{\bar{\boldsymbol{u}}_{h}},\hat{\mathbf{z}}_{h}) \\
     (s_{h}, \text{div }\widehat{\boldsymbol{\Psi}}_{h})_{\Omega} & = 0 \nonumber
    \end{align*}
for all $(\mathbf{w}_{h},s_{h})\in \mathbf{V}_{h}\times Q_{h}$.
We recall that $(\hat{\mathbf{z}}_{h},\hat{r}_h)\in \mathbf{V}_{h}\times Q_h$ solves \eqref{eq:discrete_adj_aux}.

Define $(\mathbf{K},k) := (\boldsymbol{\Psi} + \mathbf{z}_{\bar{\boldsymbol{u}}_{h}},\xi + r_{\bar{\boldsymbol{u}}_{h}})$ and $(\widehat{\mathbf{K}}_{h},\hat{k}_{h}) := (\widehat{\boldsymbol{\Psi}}_{h} + \hat{\mathbf{z}}_{h},\hat{\xi}_{h} + \hat{r}_{h})$.
We immediately note that $(\mathbf{K},k) \in \mathbf{C}^{1,\alpha}(\bar{\Omega})\times C^{0,\alpha}(\bar{\Omega})$ and $(\widehat{\mathbf{K}}_{h},\hat{k}_h)\in \mathbf{V}_{h}\times Q_{h}$ solve
\[
    \nu (\nabla \mathbf{w},\nabla \mathbf{K})_{\mathbf{L}^{2}(\Omega)} - (k, \text{div }\mathbf{w})_{\Omega} = (\mathbf{y}_{\bar{\boldsymbol{u}}_{h}} - \mathbf{y}_{\Omega},\mathbf{w})_{\mathbf{L}^{2}(\Omega)}, \quad
    (s,\text{div }\mathbf{K})_{\Omega}  = 0,
\]
for all $(\mathbf{w},s) \in \mathbf{H}_0^1(\Omega)\times L_0^2(\Omega)$,
and 
\begin{align*}
    \nu (\nabla \mathbf{w}_{h},\nabla \widehat{\mathbf{K}}_{h})_{\mathbf{L}^{2}(\Omega)} - (\hat{k}_{h}, \text{div }\mathbf{w}_{h})_{\Omega} = (\mathbf{y}_{\bar{\boldsymbol{u}}_{h}} - \mathbf{y}_{\Omega},\mathbf{w}_{h})_{\mathbf{L}^{2}(\Omega)}, \quad
    (s_{h},\text{div }\widehat{\mathbf{K}}_{h})_{\Omega}  = 0, 
\end{align*}
for all $(\mathbf{w}_{h},s_{h})\in \mathbf{V}_{h}\times Q_{h}$, respectively.
The triangle inequality yields
\[
\|\mathbf{z}_{\bar{\boldsymbol{u}}_{h}} - \hat{\mathbf{z}}_{h}\|_{\mathbf{L}^{\infty}(\Omega)}
    \leq  \|\mathbf{K} - \widehat{\mathbf{K}}_{h}\|_{\mathbf{L}^{\infty}(\Omega)} +  \|\widehat{\boldsymbol{\Psi}}_{h} - \boldsymbol{\Psi} \|_{\mathbf{L}^{\infty}(\Omega)}.
\]
Using \cite[Remark 2.18]{MR4101369} we have $\|\mathbf{K} - \widehat{\mathbf{K}}_{h}\|_{\mathbf{L}^{\infty}(\Omega)} \lesssim h|\log h|$. 
Hence
\begin{align*}
\|\mathbf{z}_{\bar{\boldsymbol{u}}_{h}} - \hat{\mathbf{z}}_{h}\|_{\mathbf{L}^{\infty}(\Omega)}
    \lesssim  h|\log h| +  \|\widehat{\boldsymbol{\Psi}}_{h} - \boldsymbol{\Psi} \|_{\mathbf{L}^{\infty}(\Omega)}.
\end{align*}
We introduce the term $(\widehat{\boldsymbol{\Psi}},\hat{\xi})\in \mathbf{H}_0^{1}(\Omega)\times L_0^2(\Omega)$ defined as the unique solution to
\[
     \nu(\nabla \mathbf{w},\nabla \widehat{\boldsymbol{\Psi}})_{\mathbf{L}^{2}(\Omega)} - (\hat{\xi}, \text{div }\mathbf{w})_{\Omega}  = b(\mathbf{y}_{\bar{\boldsymbol{u}}_{h}}; \mathbf{w},\hat{\mathbf{z}}_{h}) + b(\mathbf{w}; \mathbf{y}_{\bar{\boldsymbol{u}}_{h}},\hat{\mathbf{z}}_{h}), \quad
     (s, \text{div }\widehat{\boldsymbol{\Psi}})_{\Omega}  = 0,
\]
for all $(\mathbf{w},s) \in \mathbf{H}_0^1(\Omega)\times L_0^2(\Omega)$.
We now estimate 
\begin{align*}
\|\widehat{\boldsymbol{\Psi}}_{h} - \boldsymbol{\Psi} \|_{\mathbf{L}^{\infty}(\Omega)} \leq  \|\widehat{\boldsymbol{\Psi}}_{h} - \widehat{\boldsymbol{\Psi}} \|_{\mathbf{L}^{\infty}(\Omega)} +  \|\widehat{\boldsymbol{\Psi}} - \boldsymbol{\Psi} \|_{\mathbf{L}^{\infty}(\Omega)} =: \mathbf{I} + \mathbf{II}.
\end{align*}
To control $\mathbf{I}$, we note that $\|\nabla \widehat{\boldsymbol{\Psi}}\|_{\mathbf{L}^{\infty}(\Omega)} \lesssim \|\nabla \mathbf{y}_{\bar{\boldsymbol{u}}_{h}}\|_{\mathbf{L}^{4}(\Omega)}\|\nabla\hat{\mathbf{z}}_{h}\|_{\mathbf{L}^{4}(\Omega)}$.
The latter, in view of \eqref{eq:hat_z_W14} implies that $\|\nabla \boldsymbol{\Psi}\|_{\mathbf{L}^{\infty}(\Omega)}$ is uniformly bounded. Consequently, we are allowed to apply \cite[Remark 2.18]{MR4101369} estimating the term $\mathbf{I}$ as follows: $\mathbf{I} \lesssim h|\log h|$.
The term $\mathbf{II}$ is bounded using the stability estimate $\mathbf{II}\lesssim \|\widehat{\boldsymbol{\Psi}} - \boldsymbol{\Psi}\|_{\mathbf{H}^{2}(\Omega)} \lesssim \|\nabla(\mathbf{z}_{\bar{\boldsymbol{u}}_{h}} - \hat{\mathbf{z}}_{h})\|_{\mathbf{L}^{2}(\Omega)}$ in combination with the error estimate $\|\nabla(\mathbf{z}_{\bar{\boldsymbol{u}}_{h}} - \hat{\mathbf{z}}_{h})\|_{\mathbf{L}^{2}(\Omega)} \lesssim h$ (cf. \cite[Lemma 4.13]{MR2338434}).

A combination of the previous estimates yields the desired bound.
    \end{proof}


\subsection{A priori error estimates}

\begin{theorem}[convergence rate: control variable]\label{thm:estimate_control}
Let $\bar{\mathbf{u}}\in \mathbf{U}_{ad}$ be a local nonsingular solution to \eqref{eq:minimize_cost_func} with $(\bar{\mathbf{y}},\bar{p})$ and $(\bar{\mathbf{z}},\bar{r})$ being the corresponding state and adjoint state variables, respectively. 
Let $\{\bar{\boldsymbol{u}}_h\}_{h>0}$ be the sequence of local semidiscrete minimizers constructed in Theorem \ref{thm:convergence_of_sol}.
For each $h>0$, let $(\bar{\mathbf{y}}_{h},\bar{p}_h)$ and $(\bar{\mathbf{z}}_{h},\bar{r}_h)$ denote the corresponding discrete state and discrete adjoint state variables, respectively. 
If assumption \ref{sufficientcon} holds and $\mathbf{y}_{\Omega}\in \mathbf{L}^{\mathsf{p}}(\Omega)$ for some $\mathsf{p}>n$ sufficiently close to $n$, then
\[
    \|\bar{\mathbf{u}} - \bar{\boldsymbol{u}}_h\|_{\mathbf{L}^{1}(\Omega)}
    \lesssim
    (h|\log h|)^{\gamma} \qquad 
    \forall h < h_{*},
\]
where $\gamma \in (n/(n+2), 1]$.
\end{theorem}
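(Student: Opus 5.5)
The plan is the standard argument for variational discretization of bang-bang problems: combine the growth condition \eqref{controlgrowth} with both variational inequalities, and use the auxiliary adjoint $\mathbf{z}_{\bar{\boldsymbol{u}}_h}$ as a bridge between them. Throughout, take $h$ small enough that $\bar{\boldsymbol{u}}_h\in\mathcal{O}(\bar{\mathbf{u}})\cap B_{\mathfrak{s}}(\bar{\mathbf{u}})$ and $\|\bar{\boldsymbol{u}}_h-\bar{\mathbf{u}}\|_{\mathbf{L}^1(\Omega)}\le\delta$. This is possible by Theorem \ref{thm:convergence_of_sol}.

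\emph{Step 1: apply the growth condition.} Take $\mathbf{u}=\bar{\boldsymbol{u}}_h$ in \eqref{controlgrowth}. By Taylor's theorem applied to $J'$ along the segment from $\bar{\mathbf{u}}$ to $\bar{\boldsymbol{u}}_h$, there is $\mathbf{u}_\theta$ on this segment with
\[
J'(\bar{\boldsymbol{u}}_h)(\bar{\boldsymbol{u}}_h-\bar{\mathbf{u}})-J'(\bar{\mathbf{u}})(\bar{\boldsymbol{u}}_h-\bar{\mathbf{u}})=J''(\mathbf{u}_\theta)(\bar{\boldsymbol{u}}_h-\bar{\mathbf{u}})^2 .
\]
The difference $J''(\mathbf{u}_\theta)-J''(\bar{\mathbf{u}})$ is bounded by Lemma \ref{lemma:sec_order} (with $\mathbf{u}=\bar{\mathbf{u}}$, $\hat{\mathbf{u}}=\bar{\boldsymbol{u}}_h$) by $\tfrac{c}{4}\|\bar{\boldsymbol{u}}_h-\bar{\mathbf{u}}\|_{\mathbf{L}^1}^{1+1/\gamma}$ for $h$ small. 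In addition, $J'(\bar{\mathbf{u}})(\bar{\boldsymbol{u}}_h-\bar{\mathbf{u}})\ge0$ by \eqref{eq:1st_opt_cond}. Adding $J'(\bar{\mathbf{u}})(\bar{\boldsymbol{u}}_h-\bar{\mathbf{u}})\ge0$ once more, absorbing the Lemma \ref{lemma:sec_order} remainder, and using \eqref{eq:charact_J'}, I expect
\[
\tfrac{c}{2}\|\bar{\boldsymbol{u}}_h-\bar{\mathbf{u}}\|_{\mathbf{L}^1(\Omega)}^{1+\frac1\gamma}\le (\mathbf{z}_{\bar{\boldsymbol{u}}_h},\bar{\boldsymbol{u}}_h-\bar{\mathbf{u}})_{\mathbf{L}^2(\Omega)} .
\]
Some care is needed with the factor of $J''$: \eqref{controlgrowth} has coefficient $1$ in front of $J''$. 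If the bookkeeping gives the factor $1/2$ instead, I would use $J'(\bar{\mathbf{u}})\ge0$ to compensate.

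\emph{Step 2: use the discrete variational inequality.} Test \eqref{eq:discrete_var_ineq} with $\mathbf{u}=\bar{\mathbf{u}}$ to get $(\bar{\mathbf{z}}_h,\bar{\mathbf{u}}-\bar{\boldsymbol{u}}_h)\ge0$. Adding this to the estimate of Step 1 and applying H\"older's inequality gives
\[
\|\bar{\boldsymbol{u}}_h-\bar{\mathbf{u}}\|_{\mathbf{L}^1}^{1+\frac1\gamma}\lesssim \|\mathbf{z}_{\bar{\boldsymbol{u}}_h}-\bar{\mathbf{z}}_h\|_{\mathbf{L}^\infty(\Omega)}\|\bar{\boldsymbol{u}}_h-\bar{\mathbf{u}}\|_{\mathbf{L}^1(\Omega)},
\]
so that $\|\bar{\boldsymbol{u}}_h-\bar{\mathbf{u}}\|_{\mathbf{L}^1}\lesssim\|\mathbf{z}_{\bar{\boldsymbol{u}}_h}-\bar{\mathbf{z}}_h\|_{\mathbf{L}^\infty}^{\gamma}$. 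It then remains to show $\|\mathbf{z}_{\bar{\boldsymbol{u}}_h}-\bar{\mathbf{z}}_h\|_{\mathbf{L}^\infty}\lesssim h|\log h|$.

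\emph{Step 3 (main obstacle): max-norm adjoint error.} Split with the triangle inequality:
\[
\|\mathbf{z}_{\bar{\boldsymbol{u}}_h}-\bar{\mathbf{z}}_h\|_{\mathbf{L}^\infty}\le \|\mathbf{z}_{\bar{\boldsymbol{u}}_h}-\hat{\mathbf{z}}_h\|_{\mathbf{L}^\infty}+\|\hat{\mathbf{z}}_h-\bar{\mathbf{z}}_h\|_{\mathbf{L}^\infty}.
\]
The first term is $\lesssim h|\log h|$ by Proposition \ref{prop:max_norm_estimate_zz}.

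The second term compares two discrete solutions. They differ only in the linearization state ($\mathbf{y}_{\bar{\boldsymbol{u}}_h}$ versus $\bar{\mathbf{y}}_h$) and in the right-hand side. Their difference solves the discrete problem \eqref{eq:discrete_adj_aux}, perturbed by the data $(\bar{\mathbf{y}}_h-\mathbf{y}_{\bar{\boldsymbol{u}}_h},\cdot)$ together with the trilinear terms $b(\mathbf{y}_{\bar{\boldsymbol{u}}_h}-\bar{\mathbf{y}}_h;\cdot,\bar{\mathbf{z}}_h)+b(\cdot;\mathbf{y}_{\bar{\boldsymbol{u}}_h}-\bar{\mathbf{y}}_h,\bar{\mathbf{z}}_h)$. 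The plan is to use the discrete stability of \eqref{eq:discrete_adj_aux} (well posed by \cite[Lemma 4.12]{MR2338434}) together with a discrete Sobolev inequality, $\|\mathbf{v}_h\|_{\mathbf{L}^\infty}\lesssim|\log h|^{1/2}\|\nabla\mathbf{v}_h\|$ for $n=2$, or an inverse estimate. Combined with the state error $\|\nabla(\mathbf{y}_{\bar{\boldsymbol{u}}_h}-\bar{\mathbf{y}}_h)\|\lesssim h$ from \cite[Lemma 4.13]{MR2338434} and a bound on $\bar{\mathbf{z}}_h$ in $\mathbf{W}^{1,4}$ analogous to \eqref{eq:hat_z_W14}, this gives an $h^{1-\epsilon}$-type bound. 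That is too crude for $n=3$.

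To obtain the sharp rate I would instead mimic the proof of Proposition \ref{prop:max_norm_estimate_zz}. Move the difference of the lower-order terms to the right-hand side, obtaining a discrete Stokes problem with a right-hand side $\mathbf{G}$. Introduce its continuous counterpart and apply the max-norm Stokes estimate \cite[Remark 2.18]{MR4101369}. The continuous correction is controlled through $\mathbf{H}^2$ regularity by $\|\mathbf{G}\|_{\mathbf{L}^2}$, and $\|\mathbf{G}\|_{\mathbf{L}^2}\lesssim\|\mathbf{y}_{\bar{\boldsymbol{u}}_h}-\bar{\mathbf{y}}_h\|_{\mathbf{H}^1}\lesssim h$. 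The remaining part $\|\hat{\mathbf{z}}_h-\bar{\mathbf{z}}_h\|_{\mathbf{H}^1}$ is handled by the same duality.

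Inserting the resulting bound $\|\mathbf{z}_{\bar{\boldsymbol{u}}_h}-\bar{\mathbf{z}}_h\|_{\mathbf{L}^\infty}\lesssim h|\log h|$ into Step 2 yields $\|\bar{\mathbf{u}}-\bar{\boldsymbol{u}}_h\|_{\mathbf{L}^1}\lesssim(h|\log h|)^\gamma$.
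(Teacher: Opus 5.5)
Your Steps 1 and 2, and your treatment of $\|\mathbf{z}_{\bar{\boldsymbol{u}}_h}-\hat{\mathbf{z}}_h\|_{\mathbf{L}^\infty(\Omega)}$ via Proposition~\ref{prop:max_norm_estimate_zz}, coincide with the paper's proof. The two routes differ only for $\|\hat{\mathbf{z}}_h-\bar{\mathbf{z}}_h\|_{\mathbf{L}^\infty(\Omega)}$.

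\textbf{The paper's route.} The paper uses exactly the route you discarded: discrete stability of the linearized discrete problem plus the inverse estimate $\|\mathbf{v}_h\|_{\mathbf{L}^\infty(\Omega)}\lesssim h^{-1/2}\|\nabla\mathbf{v}_h\|_{\mathbf{L}^2(\Omega)}$ for $n=3$. It only looked too crude to you because you measured the perturbation with the $\mathbf{H}^1$ state error, which gives $h^{1/2}$ in 3D. The paper instead writes the difference equation linearized at $\bar{\mathbf{y}}_h$, so that the perturbation involves $\hat{\mathbf{z}}_h$ rather than $\bar{\mathbf{z}}_h$. It then bounds that perturbation in $\mathbf{H}^{-1}(\Omega)$ by the $\mathbf{L}^2$ state error. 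First, $|b(\mathbf{y}_{\bar{\boldsymbol{u}}_h}-\bar{\mathbf{y}}_h;\mathbf{w},\hat{\mathbf{z}}_h)|\le\|\mathbf{y}_{\bar{\boldsymbol{u}}_h}-\bar{\mathbf{y}}_h\|_{\mathbf{L}^2(\Omega)}\|\nabla\mathbf{w}\|_{\mathbf{L}^2(\Omega)}\|\hat{\mathbf{z}}_h\|_{\mathbf{L}^\infty(\Omega)}$. Second, integrating by parts, $b(\mathbf{w};\mathbf{y}_{\bar{\boldsymbol{u}}_h}-\bar{\mathbf{y}}_h,\hat{\mathbf{z}}_h)=-b(\mathbf{w};\hat{\mathbf{z}}_h,\mathbf{y}_{\bar{\boldsymbol{u}}_h}-\bar{\mathbf{y}}_h)-((\textnormal{div}\,\mathbf{w})\hat{\mathbf{z}}_h,\mathbf{y}_{\bar{\boldsymbol{u}}_h}-\bar{\mathbf{y}}_h)_{\mathbf{L}^2(\Omega)}$. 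This is bounded the same way, using $\|\nabla\hat{\mathbf{z}}_h\|_{\mathbf{L}^4(\Omega)}+\|\hat{\mathbf{z}}_h\|_{\mathbf{L}^\infty(\Omega)}\lesssim 1$ from \eqref{eq:hat_z_W14}. Combined with the duality estimate $\|\mathbf{y}_{\bar{\boldsymbol{u}}_h}-\bar{\mathbf{y}}_h\|_{\mathbf{L}^2(\Omega)}\lesssim h^2$ from \cite{MR2338434}, this gives $\|\nabla(\hat{\mathbf{z}}_h-\bar{\mathbf{z}}_h)\|_{\mathbf{L}^2(\Omega)}\lesssim h^2$. Hence $\|\hat{\mathbf{z}}_h-\bar{\mathbf{z}}_h\|_{\mathbf{L}^\infty(\Omega)}\lesssim h^{3/2}$, a higher-order term obtained without further pointwise Stokes theory.

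\textbf{Your route.} You treat $\mathbf{e}_h:=\hat{\mathbf{z}}_h-\bar{\mathbf{z}}_h$ as the Taylor--Hood approximation of a continuous Stokes solution $\mathbf{E}$ with the same right-hand side $\mathbf{G}$, and split $\|\mathbf{e}_h\|_{\mathbf{L}^\infty}\le\|\mathbf{e}_h-\mathbf{E}\|_{\mathbf{L}^\infty}+\|\mathbf{E}\|_{\mathbf{L}^\infty}$. This can be made rigorous and avoids the $\mathbf{L}^2$ state estimate, but it is heavier than your sketch suggests.
\begin{itemize}
\item $\mathbf{G}$ contains the unknown $\mathbf{e}_h$ and discrete-only quantities such as $(\textnormal{div}\,\bar{\mathbf{y}}_h)\bar{\mathbf{z}}_h$. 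So $\|\mathbf{G}\|_{\mathbf{L}^2(\Omega)}\lesssim h$ needs $\|\nabla\mathbf{e}_h\|_{\mathbf{L}^2(\Omega)}\lesssim h$ from discrete stability, plus uniform bounds on $\bar{\mathbf{z}}_h$.
\item Applying \cite[Remark 2.18]{MR4101369} to $\mathbf{E}-\mathbf{e}_h$ needs $\|\nabla\mathbf{E}\|_{\mathbf{L}^\infty(\Omega)}$ bounded uniformly in $h$. That is, $\mathbf{G}$ must be uniformly bounded in $\mathbf{L}^r(\Omega)$ for some $r>n$.
\item This in turn requires uniform $\mathbf{W}^{1,4}$ bounds on $\bar{\mathbf{y}}_h$, $\bar{\mathbf{z}}_h$ and $\mathbf{e}_h$. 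These are obtainable by inverse estimates as in \eqref{eq:hat_z_W14}, but you do not mention them.
\end{itemize}
Your route then yields only $h|\log h|$ for this term, which still suffices for the theorem.
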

\begin{proof}
Choosing $\mathbf{u} = \bar{\mathbf{u}}$ in the semidiscrete first-order optimality condition \eqref{eq:discrete_var_ineq} and using the mean value theorem, we obtain
\begin{align*}
  0  \geq & \,  J_{h}^{\prime}(\bar{\boldsymbol{u}}_{h})(\bar{\boldsymbol{u}}_{h} - \bar{\mathbf{u}}) \\
   \geq & \, [J'(\bar{\mathbf{u}})(\bar{\boldsymbol{u}}_{h} - \bar{\mathbf{u}}) + J''(\bar{\mathbf{u}})(\bar{\boldsymbol{u}}_{h} - \bar{\mathbf{u}})^2] \\
   & - |[J'(\bar{\boldsymbol{u}}_{h}) - J'(\bar{\mathbf{u}})](\bar{\boldsymbol{u}}_{h} - \bar{\mathbf{u}}) - J''(\bar{\mathbf{u}})(\bar{\boldsymbol{u}}_{h} - \bar{\mathbf{u}})^2| + [J'_{h}(\bar{\boldsymbol{u}}_{h}) - J'(\bar{\boldsymbol{u}}_{h})](\bar{\boldsymbol{u}}_{h} - \bar{\mathbf{u}}) \\
 = & \, [J'(\bar{\mathbf{u}})(\bar{\boldsymbol{u}}_{h} - \bar{\mathbf{u}}) + J''(\bar{\mathbf{u}})(\bar{\boldsymbol{u}}_{h} - \bar{\mathbf{u}})^2]  \\
 & - |[J''(\mathbf{u}_{\theta})- J''(\bar{\mathbf{u}})](\bar{\boldsymbol{u}}_{h} - \bar{\mathbf{u}})^2| + [J'_{h}(\bar{\boldsymbol{u}}_{h}) - J'(\bar{\boldsymbol{u}}_{h})](\bar{\boldsymbol{u}}_{h} - \bar{\mathbf{u}}),
\end{align*}
with $\mathbf{u}_{\theta} = \bar{\mathbf{u}} + \theta (\bar{\boldsymbol{u}}_{h} - \bar{\mathbf{u}})$ with $\theta\in (0,1)$.
Since $\bar{\boldsymbol{u}}_h \to \bar{\mathbf{u}}$ in $\mathbf{L}^{1}(\Omega)$ when $h\to0$, we take $h$ sufficiently small such that we can use the growth condition \eqref{controlgrowth} in combination with the estimate $\vert [J''(\bar{\boldsymbol{u}}_{h}) -J''(\bar{\mathbf{u}})](\bar{\boldsymbol{u}}_{h}-\mathbf{\bar u})^{2}\vert \leq  \frac{c}{2}\| \bar{\boldsymbol{u}}_{h}-\bar{\mathbf{u}}\|_{\mathbf{L}^1(\Omega)}^{1+\frac{1}{\gamma}}$ (see Lemma \ref{lemma:sec_order}) in the previous inequality. 
These arguments yield
\begin{align*}
    [J'(\bar{\boldsymbol{u}}_{h}) - J'_{h}(\bar{\boldsymbol{u}}_{h})](\bar{\boldsymbol{u}}_{h} - \bar{\mathbf{u}}) \gtrsim \| \bar{\boldsymbol{u}}_{h}-\bar{\mathbf{u}}\|_{\mathbf{L}^1(\Omega)}^{1+\frac{1}{\gamma}} \quad \forall h < h_{*}.
\end{align*}
Invoke the auxiliary variable $(\mathbf{z}_{\bar{\boldsymbol{u}}_{h}},r_{\bar{\boldsymbol{u}}_{h}}) \in \mathbf{H}_0^1(\Omega)\times L_0^{2}(\Omega)$, given as the unique solution to \eqref{eq:aux_z_uh}. 
In view of \eqref{eq:discrete_var_ineq} and $J'(\bar{\boldsymbol{u}}_{h})(\bar{\boldsymbol{u}}_{h} - \bar{\mathbf{u}}) = (\mathbf{z}_{\bar{\boldsymbol{u}}_{h}},\bar{\boldsymbol{u}}_{h} - \bar{\mathbf{u}})_{\mathbf{L}^{2}(\Omega)}$, it follows that
\begin{equation}\label{eq:u-uh--z-zh}
\|\bar{\boldsymbol{u}}_{h} - \bar{\mathbf{u}}\|_{\mathbf{L}^1(\Omega)}^{1+\frac{1}{\gamma}} 
\lesssim
(\mathbf{z}_{\bar{\boldsymbol{u}}_{h}} - \bar{\mathbf{z}}_{h},\bar{\boldsymbol{u}}_{h} - \bar{\mathbf{u}})_{\mathbf{L}^{2}(\Omega)}.   
\end{equation}
Then, the use of the triangle inequality with $\hat{\mathbf{z}}_{h}$ (cf. \eqref{eq:discrete_adj_aux}) implies that
\begin{align*}
\|\bar{\boldsymbol{u}}_{h} - \bar{\mathbf{u}}\|_{\mathbf{L}^1(\Omega)}^{\frac{1}{\gamma}} 
\lesssim
\|\mathbf{z}_{\bar{\boldsymbol{u}}_{h}} - \hat{\mathbf{z}}_{h}\|_{\mathbf{L}^{\infty}(\Omega)} + \|\hat{\mathbf{z}}_{h} - \bar{\mathbf{z}}_{h}\|_{\mathbf{L}^{\infty}(\Omega)}.
\end{align*}
Using the error estimate from Proposition \ref{prop:max_norm_estimate_zz}, we obtain 
\begin{equation}\label{eq:u-u-hleqz}
    \|\bar{\boldsymbol{u}}_{h} - \bar{\mathbf{u}}\|_{\mathbf{L}^1(\Omega)}^{\frac{1}{\gamma}}  \lesssim h|\log h| + \|\hat{\mathbf{z}}_{h} - \bar{\mathbf{z}}_{h}\|_{\mathbf{L}^{\infty}(\Omega)}.
\end{equation}
Moreover, in view of \cite[Lemma 1.142]{MR2050138}, we see that 
\begin{equation}\label{eq:max_h1}
\|\hat{\mathbf{z}}_{h} - \bar{\mathbf{z}}_{h}\|_{\mathbf{L}^{\infty}(\Omega)}
     \lesssim
\begin{cases}
 (1+|\log h|)\|\nabla(\hat{\mathbf{z}}_{h} - \bar{\mathbf{z}}_{h})\|_{\mathbf{L}^{2}(\Omega)} \quad &n = 2,\\
h^{-\frac{1}{2}}\|\nabla(\hat{\mathbf{z}}_{h} - \bar{\mathbf{z}}_{h})\|_{\mathbf{L}^{2}(\Omega)} \quad &n = 3.
\end{cases}
\end{equation}

For the sake of readability, we proceed only with case $n=3$; case $n=2$ being analogous. 
From \eqref{eq:u-u-hleqz} and \eqref{eq:max_h1}, we have 
\begin{equation}\label{eq:u-u-hleqz_H1}
    \|\bar{\boldsymbol{u}}_{h} - \bar{\mathbf{u}}\|_{\mathbf{L}^1(\Omega)}^{\frac{1}{\gamma}}  \lesssim h|\log h|  + h^{-\frac{1}{2}}\|\nabla(\hat{\mathbf{z}}_{h} - \bar{\mathbf{z}}_{h})\|_{\mathbf{L}^{2}(\Omega)}.
\end{equation}
To estimate the last term in \eqref{eq:u-u-hleqz_H1}, we note that $(\bar{\mathbf{z}}_{h} - \hat{\mathbf{z}}_{h},\bar{r}_{h} - \hat{r}_{h})\in \mathbf{V}_{h}\times Q_{h}$ solves
\begin{align*}
    &\nu(\nabla \mathbf{w}_{h},\nabla (\bar{\mathbf{z}}_{h} - \hat{\mathbf{z}}_{h}))_{\mathbf{L}^{2}(\Omega)} + b(\bar{\mathbf{y}}_{h}; \mathbf{w}_h,\bar{\mathbf{z}}_{h} - \hat{\mathbf{z}}_{h}) + b(\mathbf{w}_{h};\bar{\mathbf{y}}_{h}, \bar{\mathbf{z}}_{h} - \hat{\mathbf{z}}_{h})\\
    &\,  -(\bar{r}_{h} - \hat{r}_{h}, \text{div }\mathbf{w}_{h})_{\Omega} = (\bar{\mathbf{y}}_{h} - \mathbf{y}_{\bar{\boldsymbol{u}}_{h}}, \mathbf{w}_{h})_{\mathbf{L}^{2}(\Omega)} + b(\mathbf{y}_{\bar{\boldsymbol{u}}_{h}} - \bar{\mathbf{y}}_{h}; \mathbf{w}_{h}, \hat{\mathbf{z}}_{h})\\
    &\,+ b(\mathbf{w}_h; \mathbf{y}_{\bar{\boldsymbol{u}}_{h}} - \bar{\mathbf{y}}_{h}, \hat{\mathbf{z}}_{h}), \qquad (s_{h}, \text{div}(\bar{\mathbf{z}}_{h} - \hat{\mathbf{z}}_{h}))_{\Omega} = 0,
\end{align*}
for all $(\mathbf{w}_{h},s_{h})\in \mathbf{V}_{h}\times Q_h$. 
Then, the well posedness of the previous problem (see \cite[Lemma 4.12]{MR2338434}), reveals that 
\begin{align}\label{eq:estimate_z-zh_infty}
    \|\nabla(\hat{\mathbf{z}}_{h} - \bar{\mathbf{z}}_{h})\|_{\mathbf{L}^{2}(\Omega)}
    \lesssim &\,
   \big(\|\bar{\mathbf{y}}_{h} - \mathbf{y}_{\bar{\boldsymbol{u}}_{h}}\|_{\mathbf{L}^{2}(\Omega)}\\
    &\, + \|b(\mathbf{y}_{\bar{\boldsymbol{u}}_{h}} - \bar{\mathbf{y}}_{h}; \cdot, \hat{\mathbf{z}}_{h}) + b(\cdot; \mathbf{y}_{\bar{\boldsymbol{u}}_{h}} - \bar{\mathbf{y}}_{h}, \hat{\mathbf{z}}_{h})\|_{\mathbf{H}^{-1}(\Omega)}\big). \nonumber
\end{align}
H\"older's inequality yields that $\|b(\mathbf{y}_{\bar{\boldsymbol{u}}_{h}} - \bar{\mathbf{y}}_{h}; \cdot, \hat{\mathbf{z}}_{h}) \lesssim \|\mathbf{y}_{\bar{\boldsymbol{u}}_{h}} - \bar{\mathbf{y}}_{h}\|_{\mathbf{L}^{2}(\Omega)}\| \hat{\mathbf{z}}_{h}\|_{\mathbf{L}^{\infty}(\Omega)}$.
Integration by parts and H\"older's inequality imply, for every $\mathbf{w}\in \mathbf{H}_0^1(\Omega)$, that
\begin{align*}
&\,|b(\mathbf{w}; \mathbf{y}_{\bar{\boldsymbol{u}}_{h}} - \bar{\mathbf{y}}_{h}, \hat{\mathbf{z}}_{h})\|_{\mathbf{H}^{-1}(\Omega)}
\leq 
|b(\mathbf{w};\hat{\mathbf{z}}_{h},\mathbf{y}_{\bar{\boldsymbol{u}}_{h}} - \bar{\mathbf{y}}_{h})|
+
|((\text{div }\mathbf{w})\hat{\mathbf{z}}_{h},\mathbf{y}_{\bar{\boldsymbol{u}}_{h}} - \bar{\mathbf{y}}_{h})_{\mathbf{L}^{2}(\Omega)}|\\
\lesssim &\, 
\left(\|\mathbf{w}\|_{\mathbf{L}^{4}(\Omega)}\|\nabla \hat{\mathbf{z}}_{h}\|_{\mathbf{L}^{4}(\Omega)} + \|\nabla \mathbf{w}\|_{\mathbf{L}^{2}(\Omega)}\|\hat{\mathbf{z}}_{h}\|_{\mathbf{L}^{\infty}(\Omega)}\right)\|\mathbf{y}_{\bar{\boldsymbol{u}}_{h}} - \bar{\mathbf{y}}_{h}\|_{\mathbf{L}^{2}(\Omega)}\\
\lesssim &\, \left(\|\nabla \hat{\mathbf{z}}_{h}\|_{\mathbf{L}^{4}(\Omega)} + \|\hat{\mathbf{z}}_{h}\|_{\mathbf{L}^{\infty}(\Omega)}\right)\|\nabla \mathbf{w}\|_{\mathbf{L}^{2}(\Omega)}\|\mathbf{y}_{\bar{\boldsymbol{u}}_{h}} - \bar{\mathbf{y}}_{h}\|_{\mathbf{L}^{2}(\Omega)}.
\end{align*}
Using the embedding $\mathbf{W}^{1,4}(\Omega)\hookrightarrow \mathbf{L}^{\infty}(\Omega)$ and the bound \eqref{eq:hat_z_W14}, we have $\|\hat{\mathbf{z}}_{h}\|_{\mathbf{L}^{\infty}(\Omega)} \lesssim\| \nabla \hat{\mathbf{z}}_{h}\|_{\mathbf{L}^{4}(\Omega)} \leq C$ with $C > 0$. 
Moreover, in light \cite[Eq. (4.4)]{MR2338434} we see that $\|\mathbf{y}_{\bar{\boldsymbol{u}}_{h}} - \bar{\mathbf{y}}_{h}\|_{\mathbf{L}^{2}(\Omega)}\lesssim h^{2}$. Hence, using the previous estimates in \eqref{eq:estimate_z-zh_infty} we obtain
\[
    \|\nabla(\hat{\mathbf{z}}_{h} - \bar{\mathbf{z}}_{h})\|_{\mathbf{L}^{2}(\Omega)}
    \lesssim 
    h^{2}.
\]
The use of the latter in \eqref{eq:u-u-hleqz_H1} yields the desired estimate.
\end{proof}

\begin{corollary}[error estimate: optimal states]\label{coro:error_state}
Under the framework of Theorem \ref{thm:estimate_control}, we have
\[
    \|\bar{\mathbf{y}} - \bar{\mathbf{y}}_h\|_{\mathbf{L}^{2}(\Omega)} \lesssim (h|\log h|)^{\gamma}
    \qquad 
    \forall h < h_{*},
\]
where $\gamma \in (n/(n+2), 1]$.
\end{corollary}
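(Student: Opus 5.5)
The natural route is a triangle inequality through the continuous state associated with the discrete control. We split
\[
\|\bar{\mathbf{y}} - \bar{\mathbf{y}}_h\|_{\mathbf{L}^{2}(\Omega)} \leq \|\bar{\mathbf{y}} - \mathbf{y}_{\bar{\boldsymbol{u}}_{h}}\|_{\mathbf{L}^{2}(\Omega)} + \|\mathbf{y}_{\bar{\boldsymbol{u}}_{h}} - \bar{\mathbf{y}}_h\|_{\mathbf{L}^{2}(\Omega)},
\]
where $\mathbf{y}_{\bar{\boldsymbol{u}}_{h}}$ solves \eqref{eq:weak_navier_stokes_eq} with $\mathbf{f} = \bar{\boldsymbol{u}}_h$. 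This state is well defined and regular because $\bar{\boldsymbol{u}}_h \in \mathcal{O}(\bar{\mathbf{u}})$ for $h$ small, by Theorem \ref{thm:convergence_of_sol}.

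\emph{Second term.} This is a pure finite element error for the Navier--Stokes equations with a fixed right-hand side $\bar{\boldsymbol{u}}_h$, which is uniformly bounded in $\mathbf{L}^\infty(\Omega)$ since $\bar{\boldsymbol{u}}_h \in \mathbf{U}_{ad}$. The estimate \cite[Eq. (4.4)]{MR2338434}, already used in the proof of Theorem \ref{thm:estimate_control}, gives $\|\mathbf{y}_{\bar{\boldsymbol{u}}_{h}} - \bar{\mathbf{y}}_{h}\|_{\mathbf{L}^{2}(\Omega)} \lesssim h^{2}$. This is dominated by $(h|\log h|)^{\gamma}$ because $\gamma \leq 1$ and $h<h_*<1$.

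\emph{First term.} Here the $\mathbf{L}^1$ control estimate must be transferred to the state. I would not use Proposition \ref{prop:states_y-yt} with $\mathsf{r}>1$, since that yields only the power $1/\mathsf{r}$ of the $\mathbf{L}^1$ error and loses rate. Instead I would argue linearly with an $\mathbf{L}^1$ right-hand side, as in Proposition \ref{prop:aux_linear_states_rhsL1}.

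By the mean value theorem for $\mathcal{S}$ on the convex set $\mathcal{O}(\bar{\mathbf{u}})$ (Theorem \ref{thm:properties_C_to_S}), tested in $\mathbf{L}^2(\Omega)$ against $\bar{\mathbf{y}} - \mathbf{y}_{\bar{\boldsymbol{u}}_h}$, there is some $\mathbf{u}_t = \bar{\boldsymbol{u}}_h + t(\bar{\mathbf{u}}-\bar{\boldsymbol{u}}_h)$ with
\[
\|\bar{\mathbf{y}} - \mathbf{y}_{\bar{\boldsymbol{u}}_h}\|_{\mathbf{L}^2(\Omega)}^2 = (\boldsymbol{\varphi}_t, \bar{\mathbf{y}} - \mathbf{y}_{\bar{\boldsymbol{u}}_h})_{\mathbf{L}^2(\Omega)},
\]
where $\boldsymbol{\varphi}_t$ is the first component of $\mathcal{S}'(\mathbf{u}_t)(\bar{\mathbf{u}} - \bar{\boldsymbol{u}}_h)$. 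This gives $\|\bar{\mathbf{y}} - \mathbf{y}_{\bar{\boldsymbol{u}}_h}\|_{\mathbf{L}^2(\Omega)} \leq \|\boldsymbol{\varphi}_t\|_{\mathbf{L}^2(\Omega)}$.

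Proposition \ref{prop:aux_linear_states_rhsL1}, applied with base state $\mathbf{y}_{\mathbf{u}_t}$ (regular because $\mathbf{u}_t \in \mathcal{O}(\bar{\mathbf{u}})$), then gives $\|\boldsymbol{\varphi}_t\|_{\mathbf{L}^2(\Omega)} \lesssim \|\bar{\mathbf{u}} - \bar{\boldsymbol{u}}_h\|_{\mathbf{L}^1(\Omega)}$. Combined with Theorem \ref{thm:estimate_control}, the first term is bounded by $(h|\log h|)^{\gamma}$.

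\emph{Main obstacle.} The delicate point is the uniformity of the constant in Proposition \ref{prop:aux_linear_states_rhsL1} with respect to the base state $\mathbf{y}_{\mathbf{u}_t}$. That constant comes from $\|\mathbf{z}_{\boldsymbol{\varphi}}\|_{\mathbf{H}^2(\Omega)} \lesssim \|\boldsymbol{\varphi}\|_{\mathbf{L}^2(\Omega)}$, which depends on the inverse of the linearized operator and on $\|\mathbf{y}_{\mathbf{u}_t}\|_{\mathbf{H}^2(\Omega)}$. Both are uniformly bounded on a possibly shrunken neighborhood $\mathcal{O}(\bar{\mathbf{u}})$, because $\mathcal{S}$ is $C^2$ there and regularity is stable under small perturbations. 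One can therefore fix such a neighborhood once and for all.

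Alternatively, one can avoid the mean value point entirely and perform the duality argument directly on the difference equation satisfied by $\bar{\mathbf{y}} - \mathbf{y}_{\bar{\boldsymbol{u}}_h}$. This means writing the nonlinearity as a linearization around the pair of states and testing with the corresponding adjoint solution, which is in $\mathbf{H}^2(\Omega) \hookrightarrow \mathbf{C}(\bar\Omega)$. This yields the same $\mathbf{L}^1$ bound.
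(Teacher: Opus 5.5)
Your proof is correct. The triangle inequality through $\mathbf{y}_{\bar{\boldsymbol{u}}_h}$ and the $h^2$ bound for the discretization part are the same as in the paper. Where you differ is in bounding $e:=\bar{\mathbf{y}}-\mathbf{y}_{\bar{\boldsymbol{u}}_h}$.

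\emph{The paper's route.} It linearizes at the fixed state $\bar{\mathbf{y}}$. Then $e$ solves \eqref{eq:first_deriv_S*} with base $\bar{\mathbf{y}}$ and right-hand side $(\bar{\mathbf{u}}-\bar{\boldsymbol{u}}_h)+b(e;e,\cdot)$. Proposition~\ref{prop:aux_linear_states_rhsL1}, used only at $\bar{\mathbf{y}}$, gives $\|e\|_{\mathbf{L}^2(\Omega)}\lesssim\|\bar{\mathbf{u}}-\bar{\boldsymbol{u}}_h\|_{\mathbf{L}^1(\Omega)}+\|\nabla e\|_{\mathbf{L}^2(\Omega)}^2$. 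The quadratic remainder is then removed with the operator mean value inequality $\|\nabla e\|_{\mathbf{L}^2(\Omega)}\lesssim\|\bar{\mathbf{u}}-\bar{\boldsymbol{u}}_h\|_{\mathbf{H}^{-1}(\Omega)}\lesssim\|\bar{\mathbf{u}}-\bar{\boldsymbol{u}}_h\|_{\mathbf{L}^{6/5}(\Omega)}$. Finally it uses $\|\cdot\|_{\mathbf{L}^{6/5}(\Omega)}^2\lesssim\|\cdot\|_{\mathbf{L}^1(\Omega)}^{5/3}\lesssim\|\cdot\|_{\mathbf{L}^1(\Omega)}$, which relies on the $\mathbf{L}^\infty$ bound on $\mathbf{U}_{ad}$.

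\emph{Your route.} Your scalar mean value theorem gives the exact identity $\|e\|_{\mathbf{L}^2(\Omega)}^2=(\boldsymbol{\varphi}_t,e)_{\mathbf{L}^2(\Omega)}$ with no remainder. You therefore skip the $\mathbf{H}^1$ step and the interpolation entirely. The price is that the duality constant of Proposition~\ref{prop:aux_linear_states_rhsL1} must be uniform over the intermediate states $\mathbf{y}_{\mathbf{u}_t}$. You flag this correctly and your justification is adequate: uniform $\mathbf{H}^2$ bounds on the states, plus uniformly bounded inverses of the linearized operators on a shrunken neighborhood.

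\emph{Comparison.} The paper's argument also needs a uniformity statement, since its mean value inequality requires a bound on $\|\mathcal{S}'(\cdot)\|$ along the segment. However, it needs this only at the $\mathbf{H}^{-1}\to\mathbf{H}^1$ level, whereas yours needs it at the level of $\mathbf{H}^2$-regularity of the adjoint. Your closing alternative is also sound and arguably the cleanest. Because the nonlinearity is quadratic, $b(\bar{\mathbf{y}};\bar{\mathbf{y}},\cdot)-b(\mathbf{y}_{\bar{\boldsymbol{u}}_h};\mathbf{y}_{\bar{\boldsymbol{u}}_h},\cdot)$ is exactly the linearization at the midpoint $\tfrac12(\bar{\mathbf{y}}+\mathbf{y}_{\bar{\boldsymbol{u}}_h})$ applied to $e$. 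So $e$ solves a linear problem with an $\mathbf{L}^1$ right-hand side and no remainder, and only a perturbation argument around $\bar{\mathbf{y}}$ is needed.
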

\begin{proof}
    Let $(\mathbf{y}_{\bar{\boldsymbol{u}}_{h}},p_{\bar{\boldsymbol{u}}_{h}})\in \mathbf{H}_{0}^{1}(\Omega)\times L_0^{2}(\Omega)$ be the unique solution to \eqref{eq:weak_navier_stokes_eq} with $\mathbf{f}=\bar{\boldsymbol{u}}_{h}$. 
    The use of the estimate $\|\mathbf{y}_{\bar{\boldsymbol{u}}_{h}} - \bar{\mathbf{y}}_{h}\|_{\mathbf{L}^{2}(\Omega)}\lesssim h^{2}$ (\cite[Eq. (4.4)]{MR2338434}) implies that
    \begin{equation}\label{eq:triangle_state_y}
        \|\bar{\mathbf{y}} - \bar{\mathbf{y}}_{h}\|_{\mathbf{L}^{2}(\Omega)}
        \leq
        \|\bar{\mathbf{y}} - \mathbf{y}_{\bar{\boldsymbol{u}}_{h}}\|_{\mathbf{L}^{2}(\Omega)} + \|\mathbf{y}_{\bar{\boldsymbol{u}}_{h}} - \bar{\mathbf{y}}_{h}\|_{\mathbf{L}^{2}(\Omega)}
        \lesssim
        \|\bar{\mathbf{y}} - \mathbf{y}_{\bar{\boldsymbol{u}}_{h}}\|_{\mathbf{L}^{2}(\Omega)} + h^{2}.
    \end{equation}
To estimate the remaining term, we note that $(\bar{\mathbf{y}} - \mathbf{y}_{\bar{\boldsymbol{u}}_{h}}, \bar{p} - p_{\bar{\boldsymbol{u}}_{h}})$ solves
\begin{align*}
\begin{split}
&\,\nu(\nabla (\bar{\mathbf{y}} - \mathbf{y}_{\bar{\boldsymbol{u}}_{h}}),\nabla \mathbf{v})_{\mathbf{L}^2(\Omega)}\! + b(\bar{\mathbf{y}};\bar{\mathbf{y}} - \mathbf{y}_{\bar{\boldsymbol{u}}_{h}},\mathbf{v}) + b(\bar{\mathbf{y}} - \mathbf{y}_{\bar{\boldsymbol{u}}_{h}};\bar{\mathbf{y}},\mathbf{v})  - (\bar{p} - p_{\bar{\boldsymbol{u}}_{h}},\text{div }\mathbf{v})_{\Omega}\\
&\, = (\bar{\mathbf{u}} - \bar{\boldsymbol{u}}_{h},\mathbf{v})_{\mathbf{L}^2(\Omega)} + b(\bar{\mathbf{y}} - \mathbf{y}_{\bar{\boldsymbol{u}}_{h}};\bar{\mathbf{y}} - \mathbf{y}_{\bar{\boldsymbol{u}}_{h}},\mathbf{v}), 
\qquad (q,\text{div}(\bar{\mathbf{y}} - \mathbf{y}_{\bar{\boldsymbol{u}}_{h}}))_{\Omega} =  0,
\end{split}
\end{align*}
for all $(\mathbf{v},q)\in \mathbf{H}_0^{1}(\Omega)\times L_0^{2}(\Omega)$. 
In view of the estimate derived in Proposition \ref{prop:aux_linear_states_rhsL1}, and using Poincare's inequality, we see that
\[
    \|\bar{\mathbf{y}} - \mathbf{y}_{\bar{\boldsymbol{u}}_{h}}\|_{\mathbf{L}^{2}(\Omega)}
    \lesssim
    \|\bar{\mathbf{u}} - \bar{\boldsymbol{u}}_{h}\|_{\mathbf{L}^{1}(\Omega)} + \|\nabla(\bar{\mathbf{y}} - \mathbf{y}_{\bar{\boldsymbol{u}}_{h}})\|_{\mathbf{L}^{2}(\Omega)}^{2}.
\]
Hence, using $\|\nabla(\bar{\mathbf{y}} - \mathbf{y}_{\bar{\boldsymbol{u}}_{h}})\|_{\mathbf{L}^{2}(\Omega)} \lesssim \|\bar{\mathbf{u}} - \bar{\boldsymbol{u}}_{h}\|_{\mathbf{H}^{-1}(\Omega)}$, which follows from an application of a mean value theorem for operators \cite[Proposition 5.3.11]{MR2511061} (see also \cite[Proposition 7]{FO2023}), and the estimate $\|\bar{\mathbf{u}} - \bar{\boldsymbol{u}}_{h}\|_{\mathbf{H}^{-1}(\Omega)} \lesssim \|\bar{\mathbf{u}} - \bar{\boldsymbol{u}}_{h}\|_{\mathbf{L}^{\frac{6}{5}}(\Omega)}$ we conclude that 
\[    
\|\bar{\mathbf{y}} - \mathbf{y}_{\bar{\boldsymbol{u}}_{h}}\|_{\mathbf{L}^{2}(\Omega)}
    \lesssim 
    \|\bar{\mathbf{u}} - \bar{\boldsymbol{u}}_{h}\|_{\mathbf{L}^{1}(\Omega)} + \|\bar{\mathbf{u}} - \bar{\boldsymbol{u}}_{h}\|_{\mathbf{L}^{\frac{6}{5}}(\Omega)}^{2}
    \lesssim 
    \|\bar{\mathbf{u}} - \bar{\boldsymbol{u}}_{h}\|_{\mathbf{L}^{1}(\Omega)} + \|\bar{\mathbf{u}} - \bar{\boldsymbol{u}}_{h}\|_{\mathbf{L}^{1}(\Omega)}^{\frac{5}{3}}.
\]
From the latter, we infer that $\|\bar{\mathbf{y}} - \mathbf{y}_{\bar{\boldsymbol{u}}_{h}}\|_{\mathbf{L}^{2}(\Omega)} \lesssim \|\bar{\mathbf{u}} - \bar{\boldsymbol{u}}_{h}\|_{\mathbf{L}^{1}(\Omega)}$, which, in light of \eqref{eq:triangle_state_y} and Theorem \ref{thm:estimate_control}, concludes the proof.
\end{proof}

\begin{remark}[Improved estimate for case $n=2$]
    In the two-dimensional case we can remove the logarithmic term in the estimate of Theorem \ref{thm:estimate_control}.
    In fact, the argument relies on using, instead of the estimation $\|\mathbf{z}_{\bar{\boldsymbol{u}}_{h}} - \hat{\mathbf{z}}_{h}\|_{\mathbf{L}^{\infty}(\Omega)} \lesssim h|\log h|$ from Proposition \ref{prop:max_norm_estimate_zz}, the estimate $\|\mathbf{z}_{\bar{\boldsymbol{u}}_{h}} - \hat{\mathbf{z}}_{h}\|_{\mathbf{L}^{\infty}(\Omega)} \lesssim h\|\mathbf{z}_{\bar{\boldsymbol{u}}_{h}}\|_{\mathbf{H}^{2}(\Omega)}$.
    Using the latter in \eqref{eq:u-u-hleqz_H1}, and proceeding as the proof of Theorem \ref{thm:estimate_control}, we can conclude that $\|\bar{\boldsymbol{u}}_{h} - \bar{\mathbf{u}}\|_{\mathbf{L}^1(\Omega)} \lesssim h^{\gamma}$. 
    From this, it thus also follows that $\|\bar{\mathbf{y}} - \bar{\mathbf{y}}_{h}\|_{\mathbf{L}^{2}(\Omega)}  \lesssim h^{\gamma}$.
\end{remark}


\section{A posteriori error estimates}\label{sec:apost_OCP}
Contrary to the a priori setting, the adaptive procedure generates a sequence of locally refined meshes rather than a family characterized by a global mesh-size parameter $h$. 
Therefore, we shall denote by $\mathcal{T}_\ell$ with $\ell\in\mathbb N_0$, the triangulation obtained after $\ell$ refinement steps, with \(\mathcal T_0\) denoting an initial mesh.
We assume that the collection $\{\mathcal{T}_\ell\}_{\ell\in \mathbb{N}_0}$ is conforming and is formed by shape-regular meshes that are refinements of $\mathcal T_0$.

Since the global mesh-size parameter $h = h_{\ell}$ does not necessarily decrease as $\ell \to \infty$, we cannot ensure the existence of discrete solutions for \eqref{eq:discrete_state_eq} nor for the semidiscrete optimal control problem presented in Section \ref{sec:fem_ocp}. 
For this reason, we need to consider an additional assumption (cf. assumption (A.1) in \cite{MR4301394}):
\begin{assu}\label{ass:small_mesh}
Let $\ell \in \mathbb{N}_0$ and $h_{\ell}$ be the global mesh-size parameter associated to the mesh $\mathcal{T}_{\ell}$. 
We assume that the parameter $h_{\ell}$ is sufficiently small such that the results in \cite[Theorem 4.11]{MR2338434} hold.
\end{assu}

Let $\bar{\mathbf{u}}\in \mathbf{U}_{ad}$ be a local nonsingular solution to \eqref{eq:minimize_cost_func}.
Taking $\boldsymbol{u}$ in \eqref{eq:discrete_state_eq_semi} such that $\|\boldsymbol{u} - \bar{\mathbf{u}}\|_{\mathbf{L}^{1}(\Omega)}$ is small enough and supposing that assumption \ref{ass:small_mesh} holds, we can guarantee the existence of at least one solution to the semidiscrete optimal control problem (see Theorem \ref{thm:exist_and_conv_sol_var}).
We denote by $\bar{\boldsymbol{u}}_{\ell}$ the solutions to this problem, and by $(\bar{\mathbf{y}}_{\ell},\bar{p}_{\ell})$ the solution to \eqref{eq:discrete_state_eq_semi} associated with $\boldsymbol{u} = \bar{\boldsymbol{u}}_{\ell}$.
We note that $\bar{\boldsymbol{u}}_{\ell}\in \mathcal{O}(\bar{\mathbf{u}})$; see Theorem \ref{thm:properties_C_to_S}.

To prove a reliability estimate, we use upper bounds on the error between discrete optimal variables and some auxiliary variables that we define in the following sections.


\subsection{A posteriori error analysis for the state equations}
\label{sec:a_post_NS}

Define the pair $(\mathbf{y}_{\bar{\boldsymbol{u}}_{\ell}},p_{\bar{\boldsymbol{u}}_{\ell}})\in \mathbf{H}_0^1(\Omega) \times L_0^{2}(\Omega)$ as the solution to \eqref{eq:weak_navier_stokes_eq} with $\mathbf{f}=\bar{\boldsymbol{u}}_{\ell}$. 
We immediately note that $(\bar{\mathbf{y}}_{\ell},\bar{p}_{\ell})$, solution to \eqref{eq:discrete_state_eq_semi} with $\boldsymbol{u} = \bar{\boldsymbol{u}}_{\ell}$, corresponds to its finite element approximation.
We consider $\boldsymbol{\psi}\in \mathbf{H}^{-1}(\Omega)$ and introduce the auxiliary dual linearized Navier–Stokes problem \cite[eq. (7)]{MR1831796}: Find $(\breve{\mathbf{z}},\breve{r}) \in \mathbf{H}_0^{1}(\Omega)\times L_0^{2}(\Omega)$ solution to
\begin{align}
\label{eq:aux_linear_NS}
\nu(\nabla \mathbf{w}, \nabla \breve{\mathbf{z}})_{\mathbf{L}^2(\Omega)} + b(\mathbf{y}_{\bar{\boldsymbol{u}}_{\ell}};\mathbf{w},\breve{\mathbf{z}}) + b(\mathbf{w};\bar{\mathbf{y}}_{\ell},\breve{\mathbf{z}}) + (\breve{r},\textnormal{div } \mathbf{w})_{\Omega} & \, = \langle \boldsymbol{\psi}, \mathbf{w}\rangle \\
-(s,\textnormal{div } \breve{\mathbf{z}})_{\Omega}  & \, =   0  \nonumber
\end{align}
for all $(\mathbf{w},s) \in \mathbf{H}_0^1(\Omega)\times L_0^2(\Omega)$. 

\begin{proposition}[well-posedness and regularity]\label{prop:wp_reg_breve_z}
Suppose that Assumption \ref{ass:small_mesh} holds and that
\begin{equation}\label{eq:assump_grady_small}
\|\nabla \bar{\mathbf{y}}_{\ell}\|_{\mathbf{L}^{2}(\Omega)} < \mathcal{C}_{b}^{-1}\nu.
\end{equation}
Then, the problem \eqref{eq:aux_linear_NS} is well posed.
If, in addition, $\boldsymbol{\psi}\in \mathbf{L}^{t}$ with $t\in [\tfrac{4}{3},2]$ and 
\begin{equation}\label{eq:assumption_y_12/5}
    \|\nabla \bar{\mathbf{y}}_{\ell}\|_{\mathbf{L}^{\frac{12}{5}}(\Omega)}
    \leq C \qquad (C > 0)
\end{equation}
when $n=3$, then $(\breve{\mathbf{z}},\breve{r}) \in (\mathbf{H}_0^{1}(\Omega) \times L_0^{2}(\Omega))\cap (\mathbf{W}^{2,t}(\Omega)\times W^{1,t}(\Omega))$.
\end{proposition}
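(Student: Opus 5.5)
Well-posedness will follow from the Babuška--Brezzi (inf-sup) theory for the saddle point problem \eqref{eq:aux_linear_NS}. The pressure--divergence pairing $(\breve r,\textnormal{div }\mathbf{w})_\Omega$ satisfies the classical continuous inf-sup condition on $\mathbf{H}_0^1(\Omega)\times L_0^2(\Omega)$. It therefore suffices to show that the bilinear form
\[
a(\mathbf{w},\mathbf{z}) := \nu(\nabla\mathbf{w},\nabla\mathbf{z})_{\mathbf{L}^2(\Omega)} + b(\mathbf{y}_{\bar{\boldsymbol{u}}_{\ell}};\mathbf{w},\mathbf{z}) + b(\mathbf{w};\bar{\mathbf{y}}_{\ell},\mathbf{z})
\]
is continuous and coercive on $\mathbf{V}(\Omega)$. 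Continuity is immediate from the bound $|b(\mathbf{v}_1;\mathbf{v}_2,\mathbf{v}_3)|\le \mathcal{C}_b\prod_i\|\nabla\mathbf{v}_i\|_{\mathbf{L}^2(\Omega)}$.

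For coercivity, take $\mathbf{z}=\mathbf{w}\in\mathbf{V}(\Omega)$. Since $\mathbf{y}_{\bar{\boldsymbol{u}}_{\ell}}\in\mathbf{V}(\Omega)$ is exactly divergence free, \eqref{eq:properties_trilinear} gives $b(\mathbf{y}_{\bar{\boldsymbol{u}}_{\ell}};\mathbf{w},\mathbf{w})=0$; this is why the convective argument is the continuous state. The remaining trilinear term is bounded by $\mathcal{C}_b\|\nabla\bar{\mathbf{y}}_\ell\|_{\mathbf{L}^2(\Omega)}\|\nabla\mathbf{w}\|^2_{\mathbf{L}^2(\Omega)}$, which yields
\[
a(\mathbf{w},\mathbf{w})\ge(\nu-\mathcal{C}_b\|\nabla\bar{\mathbf{y}}_\ell\|_{\mathbf{L}^2(\Omega)})\|\nabla\mathbf{w}\|_{\mathbf{L}^2(\Omega)}^2 .
\]
The factor on the right is positive by \eqref{eq:assump_grady_small}. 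Assumption \ref{ass:small_mesh} is needed only so that $\bar{\mathbf{y}}_\ell$ and $\mathbf{y}_{\bar{\boldsymbol{u}}_{\ell}}$ exist, and so that the latter lies in $\mathbf{H}^2(\Omega)$ with a uniform bound. Existence, uniqueness and the stability bound $\|\nabla\breve{\mathbf{z}}\|_{\mathbf{L}^2(\Omega)}+\|\breve r\|_{\Omega}\lesssim\|\boldsymbol\psi\|_{\mathbf{H}^{-1}(\Omega)}$ then follow.

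For regularity, I will rewrite \eqref{eq:aux_linear_NS} as a Stokes problem (with pressure $-\breve r$),
\[
-\nu\Delta\breve{\mathbf{z}}-\nabla\breve r=\boldsymbol\psi+(\mathbf{y}_{\bar{\boldsymbol{u}}_{\ell}}\cdot\nabla)\breve{\mathbf{z}}-(\nabla\bar{\mathbf{y}}_\ell)^T\breve{\mathbf{z}},
\]
after integrating the first trilinear term by parts using $\textnormal{div }\mathbf{y}_{\bar{\boldsymbol{u}}_{\ell}}=0$. I will then invoke the $\mathbf{W}^{2,t}$ regularity of the Stokes system on convex polytopes (\cite[Corollary 1.8]{MR2987056}, as in Proposition \ref{prop:states_y-yt}), which requires the right-hand side to lie in $\mathbf{L}^t(\Omega)$ with $t\in[4/3,2]$.

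The first convective term is harmless: $\mathbf{y}_{\bar{\boldsymbol{u}}_{\ell}}\in\mathbf{H}^2(\Omega)\hookrightarrow\mathbf{L}^\infty(\Omega)$ and $\nabla\breve{\mathbf{z}}\in\mathbf{L}^2(\Omega)\subset\mathbf{L}^t(\Omega)$. The main obstacle is the term $(\nabla\bar{\mathbf{y}}_\ell)^T\breve{\mathbf{z}}$, because $\bar{\mathbf{y}}_\ell$ is only a discrete function with no a priori $\mathbf{H}^2$ regularity. I handle it by Hölder's inequality with $\breve{\mathbf{z}}\in\mathbf{H}_0^1(\Omega)\hookrightarrow\mathbf{L}^6(\Omega)$. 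For $n=3$ this gives
\[
\|(\nabla\bar{\mathbf{y}}_\ell)^T\breve{\mathbf{z}}\|_{\mathbf{L}^{12/7}(\Omega)}\le\|\nabla\bar{\mathbf{y}}_\ell\|_{\mathbf{L}^{12/5}(\Omega)}\|\breve{\mathbf{z}}\|_{\mathbf{L}^6(\Omega)},
\]
which covers $t\le12/7$ directly using \eqref{eq:assumption_y_12/5}. For $t\in(12/7,2]$ I will bootstrap. First obtain $\breve{\mathbf{z}}\in\mathbf{W}^{2,12/7}(\Omega)\hookrightarrow\mathbf{L}^\infty(\Omega)$, since $2-3\cdot7/12>0$. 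Then $(\nabla\bar{\mathbf{y}}_\ell)^T\breve{\mathbf{z}}\in\mathbf{L}^{12/5}(\Omega)\subset\mathbf{L}^2(\Omega)$, and one more application of Stokes regularity gives the claim.

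For $n=2$ the term is easier: $\mathbf{H}^1_0(\Omega)\hookrightarrow\mathbf{L}^s(\Omega)$ for every $s<\infty$ and $\nabla\bar{\mathbf{y}}_\ell\in\mathbf{L}^2(\Omega)$. This gives $\mathbf{L}^t(\Omega)$ for all $t<2$ at once, and $t=2$ follows by the same bootstrap via $\mathbf{W}^{2,t}(\Omega)\hookrightarrow\mathbf{L}^\infty(\Omega)$. The pressure regularity $\breve r\in W^{1,t}(\Omega)$ comes from the same Stokes estimate.
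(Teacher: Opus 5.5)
Your proposal is correct and follows the paper's route. Well-posedness comes from coercivity of the velocity form, via $b(\mathbf{y}_{\bar{\boldsymbol{u}}_{\ell}};\mathbf{w},\mathbf{w})=0$ and \eqref{eq:assump_grady_small}, together with inf-sup theory. Regularity comes from a two-step Stokes $\mathbf{W}^{2,t}$ bootstrap in which \eqref{eq:assumption_y_12/5} controls the term $(\nabla\bar{\mathbf{y}}_\ell)^T\breve{\mathbf{z}}$.

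The only difference is the choice of exponents. The paper first reaches $\mathbf{L}^{4/3}$ using only $\nabla\bar{\mathbf{y}}_\ell\in\mathbf{L}^2(\Omega)$ and $\breve{\mathbf{z}}\in\mathbf{L}^4(\Omega)$, then uses $\mathbf{W}^{2,4/3}(\Omega)\hookrightarrow\mathbf{L}^{12}(\Omega)$. You use the $\mathbf{L}^{12/5}$ bound immediately with $\breve{\mathbf{z}}\in\mathbf{L}^6(\Omega)$, then use $\mathbf{W}^{2,12/7}(\Omega)\hookrightarrow\mathbf{L}^\infty(\Omega)$. Both work, and you also cover the $n=2$ case, which the paper leaves out.
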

\begin{proof}
Define $\mathcal{B}: \mathbf{H}_0^1(\Omega) \times \mathbf{H}_0^1(\Omega)$ by $\mathcal{B}(\mathbf{w},\mathbf{v}):=\nu (\nabla \mathbf{w},\nabla\mathbf{v})_{\mathbf{L}^2(\Omega)}+b(\mathbf{y}_{\bar{\boldsymbol{u}}_{\ell}};\mathbf{w},\mathbf{v})+b(\mathbf{w};\bar{\mathbf{y}}_{\ell}, \mathbf{v})$.
Property \eqref{eq:properties_trilinear} and assumption \eqref{eq:assump_grady_small} imply that
\begin{equation}
\label{eq:coercivity_bilinear_form_B}
\mathcal{B}(\mathbf{w},\mathbf{w}) \geq \|  \nabla \mathbf{w} \|^2_{\mathbf{L}^2(\Omega)}(\nu - \mathcal{C}_b \| \nabla \bar{\mathbf{y}}_{\ell} \|_{\mathbf{L}^2(\Omega)}) \gtrsim\|  \nabla \mathbf{w} \|^2_{\mathbf{L}^2(\Omega)}.
\end{equation}
Hence, $\mathcal{B}$ is coercive on $\mathbf{H}_0^1(\Omega)\times \mathbf{H}_0^1(\Omega)$. 
Therefore, the standard inf--sup theory for saddle-point problems applies; in particular, by \cite[Theorem 2.34]{MR2050138}, there exists a unique solution $(\breve{\mathbf{z}},\breve{r})\in \mathbf{H}_0^1(\Omega)\times L_0^2(\Omega)$ to \eqref{eq:aux_linear_NS}.
Moreover, choosing $\mathbf{w} = \breve{\mathbf{z}}$ and $s=0$ in \eqref{eq:aux_linear_NS} and using \eqref{eq:coercivity_bilinear_form_B}, we obtain $\|\nabla\breve{\mathbf{z}}\|_{\mathbf{L}^2(\Omega)} \lesssim  \|\boldsymbol{\psi}\|_{\mathbf{H}^{-1}(\Omega)}$.

To prove the additional regularity of the solution to \eqref{eq:aux_linear_NS}, we write 
\[
\nu(\nabla \mathbf{w}, \nabla \breve{\mathbf{z}})_{\mathbf{L}^2(\Omega)} + (\breve{r},\textnormal{div } \mathbf{w})_{\Omega} = (\boldsymbol{\psi}, \mathbf{w})_{\mathbf{L}^{2}(\Omega)} - b(\mathbf{y}_{\bar{\boldsymbol{u}}_{\ell}};\mathbf{w},\breve{\mathbf{z}}) - b(\mathbf{w};\bar{\mathbf{y}}_{\ell},\breve{\mathbf{z}}) =: \langle \mathbf{F}, \mathbf{w}\rangle.
\]
First, we prove that $\mathbf{F}\in \mathbf{L}^{\frac{4}{3}}(\Omega)$. 
For readability, we will develop the case $n=3$ only.
The use of H\"older's inequality and the fact that $\mathbf{y}_{\bar{\boldsymbol{u}}_{\ell}}\in \mathbf{L}^{\infty}(\Omega)$ imply that 
\[
|b(\mathbf{y}_{\bar{\boldsymbol{u}}_{\ell}};\mathbf{w},\breve{\mathbf{z}})| 
\! = \!
|b(\mathbf{y}_{\bar{\boldsymbol{u}}_{\ell}};\breve{\mathbf{z}},\mathbf{w})| 
\!\leq \!
\|\mathbf{y}_{\bar{\boldsymbol{u}}_{\ell}}\|_{\mathbf{L}^{\infty}(\Omega)}\|\nabla \breve{\mathbf{z}}\|_{\mathbf{L}^{2}(\Omega)} \|\mathbf{w}\|_{\mathbf{L}^{2}(\Omega)}
\! \lesssim \!
\|\boldsymbol{\psi}\|_{\mathbf{H}^{-1}(\Omega)}\|\mathbf{w}\|_{\mathbf{L}^{2}(\Omega)}.
\]
Using the embedding $\mathbf{H}_0^{1}(\Omega)\hookrightarrow \mathbf{L}^{4}(\Omega)$ and $\|\nabla\breve{\mathbf{z}}\|_{\mathbf{L}^2(\Omega)} \lesssim  \|\boldsymbol{\psi}\|_{\mathbf{H}^{-1}(\Omega)}$, we obtain
\[
|b(\mathbf{w};\bar{\mathbf{y}}_{\ell},\breve{\mathbf{z}})|
\leq 
\|\mathbf{w}\|_{\mathbf{L}^{4}(\Omega)}\|\nabla \bar{\mathbf{y}}_{\ell}\|_{\mathbf{L}^{2}(\Omega)}\|\breve{\mathbf{z}}\|_{\mathbf{L}^{4}(\Omega)} 
\lesssim
\|\boldsymbol{\psi}\|_{\mathbf{H}^{-1}(\Omega)}\|\mathbf{w}\|_{\mathbf{L}^{4}(\Omega)}.
\]
Therefore, since $\boldsymbol{\psi}\in \mathbf{L}^{t}$ with $t\in [\tfrac{4}{3},2]$ it follows that 
\[
\|\mathbf{F}\|_{\mathbf{L}^{\frac{4}{3}}(\Omega)} 
\lesssim 
\|\boldsymbol{\psi}\|_{\mathbf{L}^{\frac{4}{3}}(\Omega)} + \|\boldsymbol{\psi}\|_{\mathbf{H}^{-1}(\Omega)}
\lesssim 
\|\boldsymbol{\psi}\|_{\mathbf{L}^{\frac{4}{3}}(\Omega)},
\]
upon using the embedding $\mathbf{H}_0^{1}(\Omega)\hookrightarrow \mathbf{L}^{4}(\Omega)$.
This in combination with the convexity of $\Omega$ yields that $(\breve{\mathbf{z}},\breve{r}) \in \mathbf{W}^{2,\frac{4}{3}}(\Omega)\times W^{1,\frac{4}{3}}(\Omega)$ and $\|\breve{\mathbf{z}}\|_{\mathbf{W}^{2,\frac{4}{3}}(\Omega)} + \|\breve{r}\|_{W^{1,\frac{4}{3}}(\Omega)} \lesssim \|\mathbf{F}\|_{\mathbf{L}^{\frac{4}{3}}(\Omega)}$; see \cite[Corollary 1.8]{MR2987056}.
We now invoke the embedding $\mathbf{W}^{2,\frac{4}{3}}(\Omega) \hookrightarrow \mathbf{L}^{12}(\Omega)$ and assumption \eqref{eq:assumption_y_12/5} to estimate, again, the term $|b(\cdot;\bar{\mathbf{y}}_{\ell},\breve{\mathbf{z}})|$ as follows:
\[
|b(\mathbf{w};\bar{\mathbf{y}}_{\ell},\breve{\mathbf{z}})|
\leq
\|\mathbf{w}\|_{\mathbf{L}^{2}(\Omega)}\|\nabla \bar{\mathbf{y}}_{\ell}\|_{\mathbf{L}^{\frac{12}{5}}(\Omega)}\|\breve{\mathbf{z}}\|_{\mathbf{L}^{12}(\Omega)} 
\lesssim
\|\mathbf{F}\|_{\mathbf{L}^{\frac{4}{3}}(\Omega)} \|\mathbf{w}\|_{\mathbf{L}^{2}(\Omega)}.
\]
From this we infer that, for every $t\in [\tfrac{4}{3},2]$,
\[
\|\mathbf{F}\|_{\mathbf{L}^{t}(\Omega)} 
\lesssim \|\boldsymbol{\psi}\|_{\mathbf{L}^{t}(\Omega)} + \|\boldsymbol{\psi}\|_{\mathbf{H}^{-1}(\Omega)} + \|\mathbf{F}\|_{\mathbf{L}^{\frac{4}{3}}(\Omega)}\lesssim \|\boldsymbol{\psi}\|_{\mathbf{L}^{t}(\Omega)}.
\]
Hence, using again \cite[Corollary 1.8]{MR2987056} we conclude that $(\breve{\mathbf{z}},\breve{r}) \in \mathbf{W}^{2,t}(\Omega)\times W^{1,t}(\Omega)$ and $\|\breve{\mathbf{z}}\|_{\mathbf{W}^{2,t}(\Omega)} + \|\breve{r}\|_{W^{1,t}(\Omega)} \lesssim \|\boldsymbol{\psi}\|_{\mathbf{L}^{t}(\Omega)}$ for every $t\in [\tfrac{4}{3},2]$.
\end{proof}

Let $t'\in [2,4]$ be fixed.
We consider the a posteriori error estimator
\[
\eta_{\textrm{st},t^{\prime}}:= \left(\sum_{T\in\mathcal{T}_{\ell}}\eta_{\textrm{st},t^{\prime},T}^{t^{\prime}}\right)^{1/t'},
\]
where, for every $T\in\mathcal{T}_{\ell}$, the local indicators are defined by
\begin{align*}
\eta_{\textrm{st},t^{\prime},T}^{t^{\prime}} := &\, h_T^{2t'}\|\bar{\boldsymbol{u}}_{\ell} + \nu\Delta\bar{\mathbf{y}}_{\ell} - (\bar{\mathbf{y}}_{\ell}\cdot \nabla)\bar{\mathbf{y}}_{\ell} - \nabla \bar{p}_{\ell}\|_{\mathbf{L}^{t^{\prime}}(T)}^{t^{\prime}} \\ \nonumber
& + h_T^{t'}\|\text{div }\bar{\mathbf{y}}_{\ell}\|_{L^{t^{\prime}}(T)}^{t^{\prime}}
+ h_T^{t' + 1}\|\llbracket (\nu\nabla \bar{\mathbf{y}}_{\ell} - \bar{p}_{\ell}\mathbb{I}_{d} )\cdot \mathbf{n}\rrbracket\|_{\mathbf{L}^{t^{\prime}}(\partial T \setminus \partial \Omega)}^{t^{\prime}}.
\end{align*}
The following result establishes reliability properties of the error estimator $\eta_{\textrm{st},t^{\prime}}$.

\begin{proposition}[reliability of $\eta_{\textnormal{st},t^{\prime}}$]
\label{prop:rel_st_t'}
Suppose that assumptions \ref{ass:small_mesh}, \eqref{eq:assump_grady_small}, and \eqref{eq:assumption_y_12/5} hold.
Then, for $t'\in [2,4]$, we have
\begin{align}\label{eq:reliab_NS_Volker}
\|\mathbf{y}_{\bar{\boldsymbol{u}}_{\ell}} - \bar{\mathbf{y}}_{\ell}\|_{\mathbf{L}^{t^{\prime}}(\Omega)} \lesssim \eta_{\textnormal{st},t^{\prime}}.
\end{align}
\end{proposition}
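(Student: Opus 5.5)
This is a duality (Aubin--Nitsche type) argument built on the dual problem \eqref{eq:aux_linear_NS}. Set $\mathbf{e}:=\mathbf{y}_{\bar{\boldsymbol{u}}_{\ell}}-\bar{\mathbf{y}}_{\ell}$ and $\varepsilon:=p_{\bar{\boldsymbol{u}}_{\ell}}-\bar{p}_{\ell}$. Let $t\in[\tfrac43,2]$ be the conjugate exponent of $t'$, so that
\[
\|\mathbf{e}\|_{\mathbf{L}^{t'}(\Omega)}=\sup\{(\mathbf{e},\boldsymbol{\psi})_{\mathbf{L}^2(\Omega)}:\boldsymbol{\psi}\in\mathbf{L}^{t}(\Omega),\ \|\boldsymbol{\psi}\|_{\mathbf{L}^{t}(\Omega)}=1\}.
\]
For each such $\boldsymbol{\psi}$, Proposition \ref{prop:wp_reg_breve_z} gives a unique dual solution $(\breve{\mathbf{z}},\breve{r})$, with $\|\breve{\mathbf{z}}\|_{\mathbf{W}^{2,t}(\Omega)}+\|\breve{r}\|_{W^{1,t}(\Omega)}\lesssim\|\boldsymbol{\psi}\|_{\mathbf{L}^t(\Omega)}$ (using \eqref{eq:assumption_y_12/5} when $n=3$).

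The linearization is chosen so that the nonlinear difference is exact. Indeed,
\[
b(\mathbf{y}_{\bar{\boldsymbol{u}}_{\ell}};\mathbf{y}_{\bar{\boldsymbol{u}}_{\ell}},\mathbf{v})-b(\bar{\mathbf{y}}_{\ell};\bar{\mathbf{y}}_{\ell},\mathbf{v})=b(\mathbf{y}_{\bar{\boldsymbol{u}}_{\ell}};\mathbf{e},\mathbf{v})+b(\mathbf{e};\bar{\mathbf{y}}_{\ell},\mathbf{v}),
\]
which is exactly the operator in \eqref{eq:aux_linear_NS}. I test \eqref{eq:aux_linear_NS} with $\mathbf{w}=\mathbf{e}$ and with $s=\varepsilon$ (adjusted for the sign convention). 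Since $\textnormal{div}\,\mathbf{y}_{\bar{\boldsymbol{u}}_{\ell}}=0$ but $\textnormal{div}\,\bar{\mathbf{y}}_\ell\neq0$, this gives
\[
(\boldsymbol{\psi},\mathbf{e})=\nu(\nabla\mathbf{e},\nabla\breve{\mathbf{z}})+b(\mathbf{y}_{\bar{\boldsymbol{u}}_{\ell}};\mathbf{e},\breve{\mathbf{z}})+b(\mathbf{e};\bar{\mathbf{y}}_{\ell},\breve{\mathbf{z}})-(\varepsilon,\textnormal{div}\,\breve{\mathbf{z}})+(\breve r,\textnormal{div}\,\mathbf{e}).
\]
Using the continuous equation \eqref{eq:weak_navier_stokes_eq} for $\mathbf{y}_{\bar{\boldsymbol{u}}_{\ell}}$, the first four terms become the residual $\mathcal{R}(\breve{\mathbf{z}}):=(\bar{\boldsymbol{u}}_\ell,\breve{\mathbf{z}})-\nu(\nabla\bar{\mathbf{y}}_\ell,\nabla\breve{\mathbf{z}})-b(\bar{\mathbf{y}}_\ell;\bar{\mathbf{y}}_\ell,\breve{\mathbf{z}})+(\bar p_\ell,\textnormal{div}\,\breve{\mathbf{z}})$, and the last term is $-(\breve r,\textnormal{div}\,\bar{\mathbf{y}}_\ell)$. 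Galerkin orthogonality from \eqref{eq:discrete_state_eq_semi} gives $\mathcal{R}(\mathbf{v}_\ell)=0$ for $\mathbf{v}_\ell\in\mathbf{V}_\ell$ and $(q_\ell,\textnormal{div}\,\bar{\mathbf{y}}_\ell)=0$ for $q_\ell\in Q_\ell$. I therefore subtract a Scott--Zhang interpolant $I_\ell\breve{\mathbf{z}}$ and a quasi-interpolant $\Pi_\ell\breve r$ (corrected to have zero mean, which is allowed since the mean of $\textnormal{div}\,\bar{\mathbf{y}}_\ell$ vanishes).

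Next I integrate $\mathcal{R}(\breve{\mathbf{z}}-I_\ell\breve{\mathbf{z}})$ by parts elementwise. This produces the element residuals $\bar{\boldsymbol{u}}_\ell+\nu\Delta\bar{\mathbf{y}}_\ell-(\bar{\mathbf{y}}_\ell\cdot\nabla)\bar{\mathbf{y}}_\ell-\nabla\bar p_\ell$ and the jump terms. I bound these by Hölder with exponents $t',t$ and the local interpolation estimates $\|\breve{\mathbf{z}}-I_\ell\breve{\mathbf{z}}\|_{\mathbf{L}^t(T)}\lesssim h_T^2|\breve{\mathbf{z}}|_{\mathbf{W}^{2,t}(\omega_T)}$ and $\|\breve{\mathbf{z}}-I_\ell\breve{\mathbf{z}}\|_{\mathbf{L}^t(\partial T)}\lesssim h_T^{2-1/t}|\breve{\mathbf{z}}|_{\mathbf{W}^{2,t}(\omega_T)}$. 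Note that $h_T^{2-1/t}=h_T^{1+1/t'}$, which matches the weight $h_T^{t'+1}$ in the estimator. The divergence term is handled the same way, using $\|\breve r-\Pi_\ell\breve r\|_{L^t(T)}\lesssim h_T|\breve r|_{W^{1,t}(\omega_T)}$. A discrete Hölder inequality over elements, with finite overlap of the patches, then gives $(\boldsymbol{\psi},\mathbf{e})\lesssim\eta_{\textrm{st},t'}(\|\breve{\mathbf{z}}\|_{\mathbf{W}^{2,t}}+\|\breve r\|_{W^{1,t}})\lesssim\eta_{\textrm{st},t'}$. Taking the supremum over $\boldsymbol{\psi}$ concludes the proof.

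The main obstacle is the dual regularity $\mathbf{W}^{2,t}\times W^{1,t}$ with constants uniform in $\ell$. This is exactly where \eqref{eq:assump_grady_small}, \eqref{eq:assumption_y_12/5} and Proposition \ref{prop:wp_reg_breve_z} are needed. A second, smaller point is to check that the sign conventions in \eqref{eq:aux_linear_NS} leave no stray term in the pressure/divergence coupling, given that $\bar{\mathbf{y}}_\ell$ is only discretely divergence-free. That coupling is precisely what the $\textnormal{div}\,\bar{\mathbf{y}}_\ell$ contribution to the estimator absorbs.
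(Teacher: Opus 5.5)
Your proposal is correct and follows essentially the same duality argument as the paper. You test the dual linearized problem \eqref{eq:aux_linear_NS} with the error, use the exact splitting of the convective difference together with the continuous Navier--Stokes equation and Galerkin orthogonality, subtract Scott--Zhang interpolants, integrate by parts elementwise, and conclude with H\"older, local interpolation estimates, finite overlap, and the $\mathbf{W}^{2,t}\times W^{1,t}$ bound of Proposition \ref{prop:wp_reg_breve_z}. The only minor difference is that the paper cites \cite[Proposition 2]{MR1831796} for $t'=2$, whereas your argument handles $t'=2$ (i.e., $t=2$) directly; this is legitimate since Proposition \ref{prop:wp_reg_breve_z} includes $t=2$.
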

\begin{proof}
    The case $t'=2$ was proved in \cite[Proposition 2]{MR1831796}. 
    For the case $t' \in (2,4]$, we let $\boldsymbol{\psi}\in \mathbf{L}^{t}(\Omega)$ and choose $\mathbf{w}=\mathbf{y}_{\bar{\boldsymbol{u}}_{\ell}} - \bar{\mathbf{y}}_{\ell}$ and $s=p_{\bar{\boldsymbol{u}}_{\ell}} - \bar{p}_{\ell}$ in \eqref{eq:aux_linear_NS} to obtain
\begin{align*}
\langle \boldsymbol{\psi}, \mathbf{y}_{\bar{\boldsymbol{u}}_{\ell}} - \bar{\mathbf{y}}_{\ell}\rangle = &\,
    \nu(\nabla (\mathbf{y}_{\bar{\boldsymbol{u}}_{\ell}} - \bar{\mathbf{y}}_{\ell}), \nabla \breve{\mathbf{z}})_{\mathbf{L}^2(\Omega)} + b(\mathbf{y}_{\bar{\boldsymbol{u}}_{\ell}};\mathbf{y}_{\bar{\boldsymbol{u}}_{\ell}} - \bar{\mathbf{y}}_{\ell},\breve{\mathbf{z}}) + b(\mathbf{y}_{\bar{\boldsymbol{u}}_{\ell}} - \bar{\mathbf{y}}_{\ell};\bar{\mathbf{y}}_{\ell},\breve{\mathbf{z}}) \\
 &\,   + (\breve{r},\textnormal{div} (\mathbf{y}_{\bar{\boldsymbol{u}}_{\ell}} - \bar{\mathbf{y}}_{\ell}))_{\Omega} -(p_{\bar{\boldsymbol{u}}_{\ell}} - \bar{p}_{\ell},\textnormal{div } \breve{\mathbf{z}})_{\Omega}.
\end{align*}
Let $\mathbf{I}_{\ell}: \mathbf{W}_0^{1,1}(\Omega) \to \mathbf{V}_{\ell}$ and $I_{\ell} : W^{1,1}(\Omega) \to Q_{\ell}$ be some suitable quasi-interpolation operators, e.g., the Scott-Zhang interpolant (see \cite{MR1011446} and \cite[Section 4.8]{MR2373954}).
Using the problem that $(\mathbf{y}_{\bar{\boldsymbol{u}}_{\ell}}, p_{\bar{\boldsymbol{u}}_{\ell}})$ solves in combination with Galerkin orthogonality gives
\begin{align*}
\langle \boldsymbol{\psi}, \mathbf{y}_{\bar{\boldsymbol{u}}_{\ell}} - \bar{\mathbf{y}}_{\ell}\rangle = &\,
    (\bar{\boldsymbol{u}}_{\ell}, \breve{\mathbf{z}} - \mathbf{I}_{\ell}\breve{\mathbf{z}})_{\mathbf{L}^{2}(\Omega)} -  \nu(\nabla\bar{\mathbf{y}}_{\ell}, \nabla (\breve{\mathbf{z}} - \mathbf{I}_{\ell}\breve{\mathbf{z}}))_{\mathbf{L}^2(\Omega)} - b(\bar{\mathbf{y}}_{\ell};\bar{\mathbf{y}}_{\ell},\breve{\mathbf{z}} - \mathbf{I}_{\ell}\breve{\mathbf{z}}) \\
 &\,   - (\breve{r} - I_{\ell}\breve{r},\textnormal{div }\bar{\mathbf{y}}_{\ell})_{\Omega} + (\bar{p}_{\ell},\textnormal{div}( \breve{\mathbf{z}} - \mathbf{I}_{\ell}\breve{\mathbf{z}}))_{\Omega}.
\end{align*}
From this identity and using element-wise integration by parts, standard interpolation results, the finite overlapping property of stars, and Proposition \ref{prop:wp_reg_breve_z} we arrive at
\[
\langle \boldsymbol{\psi}, \mathbf{y}_{\bar{\boldsymbol{u}}_{\ell}} - \bar{\mathbf{y}}_{\ell}\rangle 
\lesssim 
\eta_{st,t^{\prime}}(\|\breve{\mathbf{z}}\|_{\mathbf{W}^{2,t}(\Omega)} + \|\breve{r}\|_{W^{1,t}(\Omega)}) 
\lesssim 
\eta_{st,t^{\prime}}\|\boldsymbol{\psi}\|_{\mathbf{L}^{t}(\Omega)},
\]
with $1/t + 1/t' = 1$, i.e., $t\in[\tfrac{4}{3},2]$.
The latter implies that $\|\mathbf{y}_{\bar{\boldsymbol{u}}_{\ell}} - \bar{\mathbf{y}}_{\ell}\|_{\mathbf{L}^{t'}(\Omega)} = \sup_{\boldsymbol{\psi}\in \mathbf{L}^{t}(\Omega)}\|\boldsymbol{\psi}\|_{\mathbf{L}^{t}(\Omega)}^{-1}\langle \boldsymbol{\psi}, \mathbf{y}_{\bar{\boldsymbol{u}}_{\ell}} - \bar{\mathbf{y}}_{\ell}\rangle \lesssim \eta_{st,t^{\prime}}$, which concludes the proof.
\end{proof}


\subsection{A posteriori error analysis for the adjoint equations}
\label{sec:a_post_linear_NS}
Let $(\bar{\mathbf{y}}_{\ell},\bar{p}_{\ell})$ be the unique solution to \eqref{eq:discrete_state_eq_semi} with $\boldsymbol{u} = \bar{\boldsymbol{u}}_{\ell}$, which exists under assumption \ref{ass:small_mesh}.
We consider the problem: Given $\mathbf{G}\in \mathbf{H}^{-1}(\Omega)$ find $(\hat{\mathbf{z}},\hat{r}) \in \mathbf{H}_0^1(\Omega)\times L_0^2(\Omega)$ such that
\begin{align}
\label{eq:cont_adj_aux2}
\begin{split}
\nu(\nabla \mathbf{w}, \nabla \hat{\mathbf{z}})_{\mathbf{L}^2(\Omega)} + b(\bar{\mathbf{y}}_{\ell};\mathbf{w},\hat{\mathbf{z}}) + b(\mathbf{w};\bar{\mathbf{y}}_{\ell},\hat{\mathbf{z}}) -  (\hat{r},\textnormal{div } \mathbf{w})_{\Omega} &\, = \langle \mathbf{G}, \mathbf{w}\rangle \\
(s,\textnormal{div } \hat{\mathbf{z}})_{\Omega} &\,  =   0
\end{split}
\end{align}
for all $(\mathbf{w},s) \in \mathbf{H}_0^1(\Omega)\times L_0^2(\Omega)$. 
In order to guarantee that problem \eqref{eq:cont_adj_aux2} is well posed, we assume that \cite[assumption (32)]{MR4301394}
\begin{equation}\label{eq:ass_on_yl}
    2\|\nabla \bar{\mathbf{y}}_{\ell}\|_{\mathbf{L}^{2}(\Omega)} < \mathcal{C}_{b}^{-1}\nu.
\end{equation}

\begin{proposition}[well-posedness]\label{prop:reg_hatz}
Suppose that assumptions \ref{ass:small_mesh} and  \eqref{eq:ass_on_yl} hold. 
Then, the problem \eqref{eq:cont_adj_aux2} is well posed. 
Moreover, if $\mathbf{G}\in \mathbf{W}^{-1,\mathsf{p}}(\Omega)$ for some $\mathsf{p}>n$, then $\hat{\mathbf{z}}\in \mathbf{C}(\bar{\Omega})$.
\end{proposition}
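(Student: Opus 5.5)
Well-posedness follows the pattern of Proposition \ref{prop:wp_reg_breve_z}. Define $\hat{\mathcal{B}}(\mathbf{w},\mathbf{v}) := \nu(\nabla \mathbf{w},\nabla \mathbf{v})_{\mathbf{L}^2(\Omega)} + b(\bar{\mathbf{y}}_{\ell};\mathbf{w},\mathbf{v}) + b(\mathbf{w};\bar{\mathbf{y}}_{\ell},\mathbf{v})$ on $\mathbf{H}_0^1(\Omega)\times\mathbf{H}_0^1(\Omega)$. Since $\bar{\mathbf{y}}_{\ell}$ is only discretely divergence free, the skew-symmetry \eqref{eq:properties_trilinear} is not available for $b(\bar{\mathbf{y}}_{\ell};\mathbf{w},\mathbf{w})$. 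We therefore bound both trilinear terms by the continuity estimate of $b$ alone, which gives
\[
\hat{\mathcal{B}}(\mathbf{w},\mathbf{w}) \geq (\nu - 2\mathcal{C}_b\|\nabla\bar{\mathbf{y}}_{\ell}\|_{\mathbf{L}^2(\Omega)})\|\nabla \mathbf{w}\|_{\mathbf{L}^2(\Omega)}^2 .
\]
By \eqref{eq:ass_on_yl} this lower bound is positive. This explains the factor $2$ in \eqref{eq:ass_on_yl}. Continuity of $\hat{\mathcal{B}}$ is immediate from the bound on $b$. Together with the inf--sup condition for the divergence on $\mathbf{H}_0^1(\Omega)\times L_0^2(\Omega)$, \cite[Theorem 2.34]{MR2050138} yields a unique solution $(\hat{\mathbf{z}},\hat{r})$ satisfying $\|\nabla\hat{\mathbf{z}}\|_{\mathbf{L}^2(\Omega)}+\|\hat r\|_{\Omega}\lesssim\|\mathbf{G}\|_{\mathbf{H}^{-1}(\Omega)}$. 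Note that the adjoint structure (test function in the first slot) does not affect coercivity, since $\hat{\mathcal{B}}(\mathbf{w},\mathbf{w})$ is the same quantity.

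For continuity, move the convective terms to the right-hand side and view $(\hat{\mathbf{z}},\hat r)$ as the solution of a Stokes problem with data $\mathbf{F} := \mathbf{G} - b(\bar{\mathbf{y}}_{\ell};\cdot,\hat{\mathbf{z}}) - b(\cdot;\bar{\mathbf{y}}_{\ell},\hat{\mathbf{z}})$. The aim is to show $\mathbf{F}\in\mathbf{W}^{-1,\mathsf{q}}(\Omega)$ for some $\mathsf{q}>n$. Then the $\mathbf{W}^{1,\mathsf{q}}$-regularity of the Stokes problem on convex polytopes \cite[Theorem 2.9]{Brown_Shen}, valid for $\mathsf{q}$ in a range containing some $\mathsf{q}>n$, gives $\hat{\mathbf{z}}\in\mathbf{W}_0^{1,\mathsf{q}}(\Omega)\hookrightarrow\mathbf{C}(\bar{\Omega})$.

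We will reach this by a bootstrap. Because $\bar{\mathbf{y}}_{\ell}$ is a finite element function, $\bar{\mathbf{y}}_{\ell}\in\mathbf{W}^{1,\infty}(\Omega)$, but its norm is not uniformly bounded in $\ell$. Only qualitative membership is claimed, so this suffices. Thus $b(\bar{\mathbf{y}}_{\ell};\mathbf{w},\hat{\mathbf{z}})=((\bar{\mathbf{y}}_{\ell}\cdot\nabla)\mathbf{w},\hat{\mathbf{z}})$ is controlled by $\|\bar{\mathbf{y}}_{\ell}\|_{\mathbf{L}^\infty(\Omega)}\|\hat{\mathbf{z}}\|_{\mathbf{L}^{\mathsf{q}}(\Omega)}\|\nabla\mathbf{w}\|_{\mathbf{L}^{\mathsf{q}'}(\Omega)}$. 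Likewise $b(\mathbf{w};\bar{\mathbf{y}}_{\ell},\hat{\mathbf{z}})$ is controlled by $\|\nabla\bar{\mathbf{y}}_{\ell}\|_{\mathbf{L}^\infty(\Omega)}\|\hat{\mathbf{z}}\|_{\mathbf{L}^{s}(\Omega)}\|\mathbf{w}\|_{\mathbf{L}^{s'}(\Omega)}$, which is a $\mathbf{W}^{-1,\mathsf{q}}$ functional whenever $\hat{\mathbf{z}}\in\mathbf{L}^{s}$ with $s$ matched by Sobolev embedding. Start from $\hat{\mathbf{z}}\in\mathbf{H}_0^1(\Omega)\hookrightarrow\mathbf{L}^6(\Omega)$. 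For $n=3$ this already puts $\mathbf{F}$ in $\mathbf{W}^{-1,\mathsf{q}}$ for some $\mathsf{q}\in(3,6]$, up to the restriction imposed by the regularity range of the Stokes operator. For $n=2$ any $\mathsf{q}<\infty$ is available at once. If a single step does not reach $\mathsf{q}>n$, one iteration with the improved integrability of $\hat{\mathbf{z}}$ closes the argument. We also take $\mathsf{q}\le\mathsf{p}$, so that $\mathbf{G}\in\mathbf{W}^{-1,\mathsf{p}}(\Omega)\subset\mathbf{W}^{-1,\mathsf{q}}(\Omega)$.

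The main obstacle is the first convective term. Because $\operatorname{div}\bar{\mathbf{y}}_{\ell}\neq0$, we cannot shift the derivative onto $\hat{\mathbf{z}}$ without generating $(\operatorname{div}\bar{\mathbf{y}}_{\ell})\,\hat{\mathbf{z}}\cdot\mathbf{w}$. We therefore keep the derivative on $\mathbf{w}$, as above, which yields a genuine $\mathbf{W}^{-1,\mathsf{q}}$ functional without integrating by parts. If $\mathbf{L}^\infty$ bounds on $\bar{\mathbf{y}}_{\ell}$ are to be avoided, the alternative is $\mathbf{L}^6$ bounds on $\bar{\mathbf{y}}_{\ell}$ combined with the same bootstrap, at the cost of more iterations.
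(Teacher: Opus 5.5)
Your proof is correct. The well-posedness part is what the paper means by ``analogous to Proposition \ref{prop:wp_reg_breve_z}''. You correctly supply the one point that differs there: $\bar{\mathbf{y}}_{\ell}$ is only discretely divergence free, so $b(\bar{\mathbf{y}}_{\ell};\mathbf{w},\mathbf{w})$ does not vanish and both trilinear terms must be absorbed. That is exactly why \eqref{eq:ass_on_yl} carries the factor $2$.

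For the continuity of $\hat{\mathbf{z}}$, your skeleton matches the paper's:
\begin{itemize}
\item move the convective terms into $\mathbf{G}-b(\bar{\mathbf{y}}_{\ell};\cdot,\hat{\mathbf{z}})-b(\cdot;\bar{\mathbf{y}}_{\ell},\hat{\mathbf{z}})$;
\item keep the derivative on $\mathbf{w}$ in the first term;
\item apply \cite[Theorem 2.9]{Brown_Shen};
\item embed $\mathbf{W}_0^{1,\mathsf{p}}(\Omega)\hookrightarrow\mathbf{C}(\bar{\Omega})$.
\end{itemize}
The H\"older bookkeeping is what differs.

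You use that $\bar{\mathbf{y}}_{\ell}$ is a continuous piecewise quadratic, hence in $\mathbf{W}^{1,\infty}(\Omega)$. For $n=3$ this puts the right-hand side in $\mathbf{W}^{-1,\mathsf{q}}(\Omega)$ for some $\mathsf{q}\in(3,6]$ directly from $\hat{\mathbf{z}}\in\mathbf{L}^{6}(\Omega)$, so one step suffices.

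The paper uses only $\|\bar{\mathbf{y}}_{\ell}\|_{\mathbf{L}^{6}(\Omega)}\lesssim\|\nabla\bar{\mathbf{y}}_{\ell}\|_{\mathbf{L}^{2}(\Omega)}$ and bootstraps twice. First it shows the data lie in $\mathbf{W}^{-1,3}(\Omega)$, which gives $\hat{\mathbf{z}}\in\mathbf{W}_0^{1,3}(\Omega)\hookrightarrow\mathbf{L}^{\mathfrak{m}}(\Omega)$ for every $\mathfrak{m}<\infty$. Then it upgrades the data to $\mathbf{W}^{-1,\mathsf{p}}(\Omega)$ with $\mathsf{p}>3$. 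This is essentially the alternative you sketch in your last sentence.

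Your route is shorter and fully adequate for the qualitative claim $\hat{\mathbf{z}}\in\mathbf{C}(\bar{\Omega})$. The paper's route additionally yields a stability bound $\|\nabla\hat{\mathbf{z}}\|_{\mathbf{L}^{\mathsf{p}}(\Omega)}\lesssim\|\mathbf{G}\|_{\mathbf{W}^{-1,\mathsf{p}}(\Omega)}$. Its constant involves only $\|\nabla\bar{\mathbf{y}}_{\ell}\|_{\mathbf{L}^{2}(\Omega)}$ and the coercivity constant from \eqref{eq:ass_on_yl}. It does not involve the mesh-dependent quantity $\|\bar{\mathbf{y}}_{\ell}\|_{\mathbf{W}^{1,\infty}(\Omega)}$, which has no $\ell$-uniform bound under the stated assumptions.

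That quantitative version is what the paper silently relies on afterwards:
\begin{itemize}
\item in Lemma \ref{lemma:a_post_adj_hat}, where $\|\hat{\mathbf{z}}\|_{\mathbf{L}^{\infty}(\Omega)}$ and $\|\nabla\hat{\mathbf{z}}\|_{\mathbf{L}^{3}(\Omega)}$ are absorbed into the hidden constants;
\item in Lemma \ref{lemma:a_post_adj_hat_discrete}, through $\|\nabla\boldsymbol{\Psi}\|_{\mathbf{L}^{\mathsf{p}}(\Omega)}\lesssim\|\mathbf{K}\|_{\mathbf{L}^{\infty}(\Omega)}$.
\end{itemize}
If you want your proposition to support those reliability estimates with constants independent of $\ell$, the $\mathbf{L}^{6}$-based bootstrap is the one to write out.
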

\begin{proof}
The well-posedness follows by arguments analogous to those in the proof of Proposition \ref{prop:wp_reg_breve_z}.
Let us prove the regularity for the case $n=3$.
Define
\[
\mathcal{G} = \mathbf{G} - b(\bar{\mathbf{y}}_{\ell};\cdot,\hat{\mathbf{z}}) - b(\cdot;\bar{\mathbf{y}}_{\ell},\hat{\mathbf{z}}) \in \mathbf{H}^{-1}(\Omega)
\]
and write $\nu(\nabla \mathbf{w}, \nabla \hat{\mathbf{z}})_{\mathbf{L}^2(\Omega)} -  (\hat{r},\textnormal{div } \mathbf{w})_{\Omega} = \langle \mathcal{G}, \mathbf{w}\rangle_{\mathbf{H}^{-1}(\Omega), \mathbf{H}_0^{1}(\Omega)}$.
First, we prove that $\mathcal{G}\in \mathbf{W}^{-1,3}(\Omega)$. 
H\"older's inequality and the embedding $\mathbf{H}_0^{1}(\Omega)\hookrightarrow \mathbf{L}^{6}(\Omega)$ imply that
\begin{align*}
    &|b(\bar{\mathbf{y}}_{\ell};\mathbf{w},\hat{\mathbf{z}})| 
    \leq 
    \|\bar{\mathbf{y}}_{\ell}\|_{\mathbf{L}^{6}(\Omega)}\|\nabla \mathbf{w}\|_{\mathbf{L}^{3/2}(\Omega)}\|\hat{\mathbf{z}}\|_{\mathbf{L}^{6}(\Omega)}
    \lesssim
    \|\nabla \bar{\mathbf{y}}_{\ell}\|_{\mathbf{L}^{2}(\Omega)}\|\nabla \mathbf{w}\|_{\mathbf{L}^{3/2}(\Omega)}\|\nabla \hat{\mathbf{z}}\|_{\mathbf{L}^{2}(\Omega)},\\
  &  |b(\mathbf{w};\bar{\mathbf{y}}_{\ell},\hat{\mathbf{z}})|
    \leq 
    \|\mathbf{w}\|_{\mathbf{L}^{3}(\Omega)} \|\nabla \bar{\mathbf{y}}_{\ell}\|_{\mathbf{L}^{2}(\Omega)}\|\hat{\mathbf{z}}\|_{\mathbf{L}^{6}(\Omega)}
    \lesssim
    \|\nabla \mathbf{w}\|_{\mathbf{L}^{3/2}(\Omega)}\|\nabla \bar{\mathbf{y}}_{\ell}\|_{\mathbf{L}^{2}(\Omega)}\|\nabla \hat{\mathbf{z}}\|_{\mathbf{L}^{2}(\Omega)}.
\end{align*}
It follows that $\mathcal{G}\in \mathbf{W}^{-1,3}(\Omega)$, which, in view of \cite[Theorem 2.9]{Brown_Shen} (see also \cite[eq. (1.52)]{MR2987056}), shows that $\hat{\mathbf{z}}\in \mathbf{W}_0^{1,3}(\Omega)$.
We now use the previous regularity to prove that $\mathcal{G}\in \mathbf{W}^{-1,\mathsf{p}}(\Omega)$ for some $\mathsf{p}> 3$.
In fact, H\"older's inequality and the embedding $\mathbf{W}_0^{1,3}(\Omega)\hookrightarrow \mathbf{L}^{\mathsf{s}}(\Omega)$ with $\mathsf{s} < \infty$ yield that
\[
    |b(\bar{\mathbf{y}}_{\ell};\mathbf{w},\hat{\mathbf{z}})| 
    \leq 
    \|\bar{\mathbf{y}}_{\ell}\|_{\mathbf{L}^{6}(\Omega)}\|\nabla \mathbf{w}\|_{\mathbf{L}^{\mathsf{p}'}(\Omega)}\|\hat{\mathbf{z}}\|_{\mathbf{L}^{\mathfrak{m}}(\Omega)}
    \lesssim
    \|\nabla \bar{\mathbf{y}}_{\ell}\|_{\mathbf{L}^{2}(\Omega)}\|\nabla \mathbf{w}\|_{\mathbf{L}^{\mathsf{p}'}(\Omega)}\|\nabla \hat{\mathbf{z}}\|_{\mathbf{L}^{3}(\Omega)},
\]
with $1/\mathsf{p} + 1/\mathsf{p}' = 1$ and $\mathfrak{m} = 6\mathsf{p}'/(6\mathsf{p}'-6-\mathsf{p}')$. 
Similarly, we can prove that $|b(\mathbf{w};\bar{\mathbf{y}}_{\ell},\hat{\mathbf{z}})|\lesssim \|\nabla \mathbf{w}\|_{\mathbf{L}^{\mathsf{p}'}(\Omega)}\|\nabla \bar{\mathbf{y}}_{\ell}\|_{\mathbf{L}^{2}(\Omega)}\|\nabla \hat{\mathbf{z}}\|_{\mathbf{L}^{3}(\Omega)}$.
Therefore, $\mathcal{G}\in \mathbf{W}^{-1,\mathsf{p}}(\Omega)$ with $\mathsf{p}>3$.
Invoking again \cite[Theorem 2.9]{Brown_Shen} (see also \cite[eq. (1.52)]{MR2987056}), we see that $\hat{\mathbf{z}}\in \mathbf{W}_0^{1,\mathsf{p}}(\Omega)$.
We conclude in view of the embedding $\mathbf{W}_0^{1,\mathsf{p}}(\Omega)\hookrightarrow \mathbf{C}(\bar{\Omega})$.
\end{proof}

\begin{lemma}[auxiliary estimate I]\label{lemma:a_post_adj_hat}
Suppose that assumptions \ref{ass:small_mesh}, \eqref{eq:assump_grady_small}, \eqref{eq:assumption_y_12/5}, and \eqref{eq:ass_on_yl} hold. 
Let $(\mathbf{z}_{\bar{\boldsymbol{u}}_{\ell}},r_{\bar{\boldsymbol{u}}_{\ell}})\in \mathbf{H}_0^{1}(\Omega)\times L_0^{2}(\Omega)$ be the solution to \eqref{eq:aux_z_uh}, and let $(\hat{\mathbf{z}},\hat{r}) \in \mathbf{H}_0^1(\Omega)\times L_0^2(\Omega)$ denote the solution to \eqref{eq:cont_adj_aux2} with $\mathbf{G} = \bar{\mathbf{y}}_{\ell} - \mathbf{y}_{\Omega}$.
Then
\[
    \|\mathbf{z}_{\bar{\boldsymbol{u}}_{\ell}} - \hat{\mathbf{z}}\|_{\mathbf{L}^{\infty}(\Omega)} 
    \lesssim
    \eta_{\textrm{st},2} + \eta_{\textnormal{st},\mathsf{p}} + \|\textnormal{div }\bar{\mathbf{y}}_{\ell}\|_{\mathbf{L}^{\mu}(\Omega)} ,
\]
where $\mathsf{p}> n$ is arbitrarily close to $n$ and $\mu = \tfrac{n\mathsf{p}'}{n(\mathsf{p}'-1) + \mathsf{p}'} = \tfrac{n\mathsf{p}}{n+\mathsf{p}}>\tfrac{n}{2}$.
\end{lemma}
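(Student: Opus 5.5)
The plan is to write an equation for the difference $\mathbf{e}:=\mathbf{z}_{\bar{\boldsymbol{u}}_{\ell}} - \hat{\mathbf{z}}$ in which the convective coefficient is the \emph{continuous} state $\mathbf{y}_{\bar{\boldsymbol{u}}_{\ell}}$. We then estimate $\|\nabla \mathbf{e}\|_{\mathbf{L}^{\mathsf{p}}(\Omega)}$ by a $\mathbf{W}^{-1,\mathsf{p}}$ argument and conclude with the embedding $\mathbf{W}_0^{1,\mathsf{p}}(\Omega)\hookrightarrow \mathbf{L}^{\infty}(\Omega)$, $\mathsf{p}>n$. Throughout we write $\mathbf{d}:=\mathbf{y}_{\bar{\boldsymbol{u}}_{\ell}} - \bar{\mathbf{y}}_{\ell}$.

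\emph{Step 1 (error equation).} Subtracting \eqref{eq:cont_adj_aux2} (with $\mathbf{G}=\bar{\mathbf{y}}_{\ell}-\mathbf{y}_\Omega$) from \eqref{eq:aux_z_uh} gives, for all $(\mathbf{w},s)\in \mathbf{H}_0^1(\Omega)\times L_0^2(\Omega)$,
\[
\nu(\nabla \mathbf{w},\nabla \mathbf{e})_{\mathbf{L}^2(\Omega)} + b(\mathbf{y}_{\bar{\boldsymbol{u}}_{\ell}};\mathbf{w},\mathbf{e}) + b(\mathbf{w};\mathbf{y}_{\bar{\boldsymbol{u}}_{\ell}},\mathbf{e}) - (r_{\bar{\boldsymbol{u}}_{\ell}}-\hat r,\textnormal{div }\mathbf{w})_{\Omega} = (\mathbf{d},\mathbf{w})_{\mathbf{L}^2(\Omega)} - b(\mathbf{d};\mathbf{w},\hat{\mathbf{z}}) - b(\mathbf{w};\mathbf{d},\hat{\mathbf{z}}),
\]
together with $(s,\textnormal{div }\mathbf{e})_{\Omega}=0$. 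Since $\bar{\boldsymbol{u}}_{\ell}\in\mathcal{O}(\bar{\mathbf{u}})$, the velocity $\mathbf{y}_{\bar{\boldsymbol{u}}_{\ell}}$ is regular and lies in $\mathbf{H}^2(\Omega)$. Arguing exactly as in the proof of Lemma~\ref{lemma:aux_prop_z1-z2}, we combine the $\mathbf{H}^1$ stability of this well-posed problem with the $\mathbf{W}^{1,\mathsf{p}}$ Stokes regularity of \cite[Theorem 2.9]{Brown_Shen}, after moving the lower-order terms to the right-hand side. This should give
\[
\|\nabla \mathbf{e}\|_{\mathbf{L}^{\mathsf{p}}(\Omega)}\lesssim \|\mathcal{R}\|_{\mathbf{W}^{-1,\mathsf{p}}(\Omega)},
\]
where $\mathcal{R}$ is the right-hand side above. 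It therefore suffices to bound $\langle \mathcal{R},\mathbf{w}\rangle$ for $\mathbf{w}\in \mathbf{W}_0^{1,\mathsf{p}'}(\Omega)$. We use $\mathbf{w}\in\mathbf{L}^{q}(\Omega)$ with $1/q=1/\mathsf{p}'-1/n = 1-1/\mathsf{p}-1/n$.

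\emph{Step 2 (the three terms).} For the first term we use $\mathbf{W}_0^{1,\mathsf{p}'}(\Omega)\hookrightarrow \mathbf{L}^2(\Omega)$, which holds for $\mathsf{p}$ close to $n$. Together with Proposition~\ref{prop:rel_st_t'} for $t'=2$, this gives $|(\mathbf{d},\mathbf{w})|\lesssim \eta_{\textnormal{st},2}\|\nabla\mathbf{w}\|_{\mathbf{L}^{\mathsf{p}'}(\Omega)}$.

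For the second term, $\mathbf{d}$ is not solenoidal because $\bar{\mathbf{y}}_{\ell}$ is only discretely divergence free. Integrating by parts therefore gives
\[
b(\mathbf{d};\mathbf{w},\hat{\mathbf{z}}) = -b(\mathbf{d};\hat{\mathbf{z}},\mathbf{w}) - ((\textnormal{div }\mathbf{d})\,\mathbf{w},\hat{\mathbf{z}})_{\mathbf{L}^2(\Omega)},
\qquad \textnormal{div }\mathbf{d}=-\textnormal{div }\bar{\mathbf{y}}_{\ell}.
\]
We bound the first piece by $\|\mathbf{d}\|_{\mathbf{L}^{\mathsf{p}}}\|\nabla\hat{\mathbf{z}}\|_{\mathbf{L}^{\mathsf{p}}}\|\mathbf{w}\|_{\mathbf{L}^{q}}$; the exponents are admissible since $2/\mathsf{p}+1/q\le 1 \iff \mathsf{p}\ge n$. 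We bound the second piece by $\|\textnormal{div }\bar{\mathbf{y}}_{\ell}\|_{\mathbf{L}^{\mu}}\|\hat{\mathbf{z}}\|_{\mathbf{L}^\infty}\|\mathbf{w}\|_{\mathbf{L}^q}$, where $1/\mu = 1-1/q = 1/\mathsf{p}+1/n$; this is exactly the $\mu$ of the statement.

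For the third term we integrate by parts in the form
\[
b(\mathbf{w};\mathbf{d},\hat{\mathbf{z}}) = -b(\mathbf{w};\hat{\mathbf{z}},\mathbf{d}) - ((\textnormal{div }\mathbf{w})\,\mathbf{d},\hat{\mathbf{z}})_{\mathbf{L}^2(\Omega)}.
\]
This is bounded by $(\|\nabla\hat{\mathbf{z}}\|_{\mathbf{L}^{\mathsf{p}}}+\|\hat{\mathbf{z}}\|_{\mathbf{L}^\infty})\|\mathbf{d}\|_{\mathbf{L}^{\mathsf{p}}}\|\nabla\mathbf{w}\|_{\mathbf{L}^{\mathsf{p}'}}$. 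Finally, Proposition~\ref{prop:rel_st_t'} with $t'=\mathsf{p}\in(n,4]$, which is admissible for $\mathsf{p}$ close to $n$, gives $\|\mathbf{d}\|_{\mathbf{L}^{\mathsf{p}}}\lesssim\eta_{\textnormal{st},\mathsf{p}}$.

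\emph{Step 3 (main obstacle).} The delicate point is a bound on $\|\nabla \hat{\mathbf{z}}\|_{\mathbf{L}^{\mathsf{p}}(\Omega)}$, and hence on $\|\hat{\mathbf{z}}\|_{\mathbf{L}^\infty(\Omega)}$, that is uniform in $\ell$. We plan to obtain it from the bootstrap in the proof of Proposition~\ref{prop:reg_hatz}, now keeping track of constants. First, $\mathbf{G}=\bar{\mathbf{y}}_{\ell}-\mathbf{y}_\Omega$ is bounded in $\mathbf{L}^{\mathsf{p}}(\Omega)\subset \mathbf{W}^{-1,\mathsf{p}}(\Omega)$, using $\mathbf{y}_\Omega\in\mathbf{L}^{\mathsf{p}}$ and a uniform bound on $\bar{\mathbf{y}}_{\ell}$, for instance via \eqref{eq:assump_grady_small}. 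Second, \eqref{eq:ass_on_yl} gives uniform $\mathbf{H}^1$ stability of $\hat{\mathbf{z}}$. Two applications of \cite[Theorem 2.9]{Brown_Shen}, through $\mathbf{W}^{1,3}$ and then $\mathbf{W}^{1,\mathsf{p}}$, then yield $\|\nabla\hat{\mathbf{z}}\|_{\mathbf{L}^{\mathsf{p}}}\lesssim 1$ with constants depending only on $\|\nabla\bar{\mathbf{y}}_{\ell}\|_{\mathbf{L}^2}$ and the data.

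Collecting Steps 2 and 3 gives $\|\mathcal{R}\|_{\mathbf{W}^{-1,\mathsf{p}}}\lesssim \eta_{\textnormal{st},2}+\eta_{\textnormal{st},\mathsf{p}}+\|\textnormal{div }\bar{\mathbf{y}}_{\ell}\|_{\mathbf{L}^{\mu}(\Omega)}$. Step 1 and the embedding $\mathbf{W}_0^{1,\mathsf{p}}(\Omega)\hookrightarrow\mathbf{L}^\infty(\Omega)$ then yield the claimed bound on $\|\mathbf{z}_{\bar{\boldsymbol{u}}_{\ell}} - \hat{\mathbf{z}}\|_{\mathbf{L}^{\infty}(\Omega)}$.
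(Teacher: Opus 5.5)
Your proposal is correct and follows the paper's proof essentially step for step. The common steps are:
\begin{itemize}
\item the same error equation, with $\mathbf{y}_{\bar{\boldsymbol{u}}_{\ell}}$ as the convective coefficient;
\item the same $\mathbf{H}^1$-stability-then-$\mathbf{W}^{1,\mathsf{p}}$ argument via Brown--Shen;
\item the same integrations by parts, which produce the $\|\textnormal{div }\bar{\mathbf{y}}_{\ell}\|_{\mathbf{L}^{\mu}(\Omega)}$ term with exactly the stated $\mu$;
\item Proposition~\ref{prop:rel_st_t'} with $t'=2$ and $t'=\mathsf{p}$.
\end{itemize}
Your Step 3 only makes explicit the bound on $\|\nabla\hat{\mathbf{z}}\|_{\mathbf{L}^{\mathsf{p}}(\Omega)}$ that the paper takes implicitly from Proposition~\ref{prop:reg_hatz}; there, $\mathbf{y}_\Omega\in\mathbf{L}^2(\Omega)$ already suffices, since $\mathbf{L}^2(\Omega)\hookrightarrow\mathbf{W}^{-1,\mathsf{p}}(\Omega)$ for $\mathsf{p}$ close to $n$.
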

\begin{proof}
Note that $(\mathbf{z}_{\bar{\boldsymbol{u}}_{\ell}} - \hat{\mathbf{z}},r_{\bar{\boldsymbol{u}}_{\ell}} - \hat{r})\in \mathbf{H}_{0}^{1}(\Omega)\times L_0^{2}(\Omega)$ solves
\begin{align}\label{eq:problem_diff_z}
&\nu(\nabla \mathbf{w}, \nabla (\mathbf{z}_{\bar{\boldsymbol{u}}_{\ell}} - \hat{\mathbf{z}}))_{\mathbf{L}^2(\Omega)} + b(\mathbf{y}_{\bar{\boldsymbol{u}}_{\ell}};\mathbf{w},\mathbf{z}_{\bar{\boldsymbol{u}}_{\ell}} - \hat{\mathbf{z}}) + b(\mathbf{w};\mathbf{y}_{\bar{\boldsymbol{u}}_{\ell}},\mathbf{z}_{\bar{\boldsymbol{u}}_{\ell}} - \hat{\mathbf{z}}) \\
\nonumber
& -  (r_{\bar{\boldsymbol{u}}_{\ell}} - \hat{r},\text{div } \mathbf{w})_{\Omega} = (\mathbf{y}_{\bar{\boldsymbol{u}}_{\ell}} - \bar{\mathbf{y}}_{\ell},\mathbf{w})_{\mathbf{L}^2(\Omega)} - b(\mathbf{y}_{\bar{\boldsymbol{u}}_{\ell}} - \bar{\mathbf{y}}_{\ell};\mathbf{w},\hat{\mathbf{z}}) \\
& \hspace{3.8cm}
- b(\mathbf{w};\mathbf{y}_{\bar{\boldsymbol{u}}_{\ell}} - \bar{\mathbf{y}}_{\ell},\hat{\mathbf{z}}),   \qquad
(s,\text{div}(\mathbf{z}_{\bar{\boldsymbol{u}}_{\ell}} - \hat{\mathbf{z}}))_{\Omega}  =   0, \nonumber
\end{align}
for all $(\mathbf{w},s) \in \mathbf{H}_0^1(\Omega)\times L_0^2(\Omega)$.  
We observe that 
\[
|b(\mathbf{y}_{\bar{\boldsymbol{u}}_{\ell}} - \bar{\mathbf{y}}_{\ell};\mathbf{w},\hat{\mathbf{z}})|
\leq 
\|\mathbf{y}_{\bar{\boldsymbol{u}}_{\ell}} - \bar{\mathbf{y}}_{\ell}\|_{\mathbf{L}^{2}(\Omega)}\|\nabla \mathbf{w}\|_{\mathbf{L}^{2}(\Omega)}\|\hat{\mathbf{z}}\|_{\mathbf{L}^{\infty}(\Omega)}
\lesssim
\|\mathbf{y}_{\bar{\boldsymbol{u}}_{\ell}} - \bar{\mathbf{y}}_{\ell}\|_{\mathbf{L}^{2}(\Omega)}\|\nabla \mathbf{w}\|_{\mathbf{L}^{2}(\Omega)}.
\]
Similarly, since $\hat{\mathbf{z}}\in \mathbf{W}_0^{1,3}(\Omega)\cap \mathbf{L}^{\infty}(\Omega)$ (cf. Proposition \ref{prop:reg_hatz}), we have that 
\begin{align*}
&|b(\mathbf{w};\mathbf{y}_{\bar{\boldsymbol{u}}_{\ell}} - \bar{\mathbf{y}}_{\ell},\hat{\mathbf{z}})|
\leq 
|b(\mathbf{w};\hat{\mathbf{z}},\mathbf{y}_{\bar{\boldsymbol{u}}_{\ell}} - \bar{\mathbf{y}}_{\ell})|
+
|((\text{div }\mathbf{w})\hat{\mathbf{z}},\mathbf{y}_{\bar{\boldsymbol{u}}_{\ell}} - \bar{\mathbf{y}}_{\ell})_{\mathbf{L}^{2}(\Omega)}|\\
& \qquad \lesssim 
\|\mathbf{w}\|_{\mathbf{L}^{6}(\Omega)}\|\nabla\hat{\mathbf{z}}\|_{\mathbf{L}^{3}(\Omega)}\|\mathbf{y}_{\bar{\boldsymbol{u}}_{\ell}} - \bar{\mathbf{y}}_{\ell}\|_{\mathbf{L}^{2}(\Omega)} + \|\text{div }\mathbf{w}\|_{\mathbf{L}^{2}(\Omega)}\|\hat{\mathbf{z}}\|_{\mathbf{L}^{\infty}(\Omega)}\|\mathbf{y}_{\bar{\boldsymbol{u}}_{\ell}} - \bar{\mathbf{y}}_{\ell}\|_{\mathbf{L}^{2}(\Omega)}\\
& \qquad \lesssim \|\mathbf{y}_{\bar{\boldsymbol{u}}_{\ell}} - \bar{\mathbf{y}}_{\ell}\|_{\mathbf{L}^{2}(\Omega)} \|\nabla \mathbf{w}\|_{\mathbf{L}^{2}(\Omega)}.
\end{align*}
Moreover, $|(\mathbf{y}_{\bar{\boldsymbol{u}}_{\ell}} - \bar{\mathbf{y}}_{\ell}, \mathbf{w})_{\mathbf{L}^{2}(\Omega)}| \lesssim \|\mathbf{y}_{\bar{\boldsymbol{u}}_{\ell}} - \bar{\mathbf{y}}_{\ell}\|_{\mathbf{L}^{2}(\Omega}\|\nabla \mathbf{w}\|_{\mathbf{L}^{2}(\Omega)}$.
Therefore, since $\mathbf{y}_{\bar{\boldsymbol{u}}_{\ell}}$ is regular (cf. assumption \ref{ass:small_mesh}), Step 1 of the proof of Theorem 2.9 in \cite{MR3936891} implies the well-posedness of problem \eqref{eq:problem_diff_z}.
In particular, we infer that 
\begin{align}\label{eq:stab_estim_H1}
    \|\nabla (\mathbf{z}_{\bar{\boldsymbol{u}}_{\ell}} - \hat{\mathbf{z}})\|_{\mathbf{L}^{2}(\Omega)} 
    \lesssim 
    \|\mathbf{y}_{\bar{\boldsymbol{u}}_{\ell}} - \bar{\mathbf{y}}_{\ell}\|_{\mathbf{L}^{2}(\Omega)}.
\end{align}

We now prove that $\mathbf{z}_{\bar{\boldsymbol{u}}_{\ell}} - \hat{\mathbf{z}}\in \mathbf{W}_0^{1,\mathsf{p}}(\Omega)$ for some $\mathsf{p}>n$ sufficiently close to $n$, with a stability estimate similar to \eqref{eq:stab_estim_H1}.
To this end, we proceed as in the proof of Proposition \ref{prop:reg_hatz} and write the first equation in \eqref{eq:problem_diff_z} as 
\begin{align*}
&\nu(\nabla \mathbf{w}, \nabla (\mathbf{z}_{\bar{\boldsymbol{u}}_{\ell}} - \hat{\mathbf{z}}))_{\mathbf{L}^2(\Omega)}  - (r_{\bar{\boldsymbol{u}}_{\ell}} - \hat{r},\text{div } \mathbf{w})_{\Omega} = - b(\mathbf{w};\mathbf{y}_{\bar{\boldsymbol{u}}_{\ell}},\mathbf{z}_{\bar{\boldsymbol{u}}_{\ell}} - \hat{\mathbf{z}}) \\
&  - b(\mathbf{y}_{\bar{\boldsymbol{u}}_{\ell}};\mathbf{w},\mathbf{z}_{\bar{\boldsymbol{u}}_{\ell}} - \hat{\mathbf{z}}) + (\mathbf{y}_{\bar{\boldsymbol{u}}_{\ell}} - \bar{\mathbf{y}}_{\ell},\mathbf{w})_{\mathbf{L}^2(\Omega)} + b(\mathbf{y}_{\bar{\boldsymbol{u}}_{\ell}} - \bar{\mathbf{y}}_{\ell};\mathbf{w},\hat{\mathbf{z}}) + b(\mathbf{w};\mathbf{y}_{\bar{\boldsymbol{u}}_{\ell}} - \bar{\mathbf{y}}_{\ell},\hat{\mathbf{z}}).
\end{align*}
We shall estimate each term on the right-hand side of this equation in $\mathbf{W}^{-1,\mathsf{p}}(\Omega)$. 
H\"older's inequality and the fact that $\mathbf{y}_{\bar{\boldsymbol{u}}_{\ell}} \in \mathbf{H}^{2}(\Omega)$ imply that
\begin{align*}
|b(\mathbf{w};\mathbf{y}_{\bar{\boldsymbol{u}}_{\ell}},\mathbf{z}_{\bar{\boldsymbol{u}}_{\ell}} - \hat{\mathbf{z}})|
 & \leq 
\|\mathbf{w}\|_{\mathbf{L}^{2}(\Omega)}\|\nabla \mathbf{y}_{\bar{\boldsymbol{u}}_{\ell}}\|_{\mathbf{L}^{3}(\Omega)}\|\mathbf{z}_{\bar{\boldsymbol{u}}_{\ell}} - \hat{\mathbf{z}}\|_{\mathbf{L}^6(\Omega)} \\
& \lesssim \|\nabla\mathbf{w}\|_{\mathbf{L}^{\mathsf{p}'}(\Omega)}\|\nabla \mathbf{y}_{\bar{\boldsymbol{u}}_{\ell}}\|_{\mathbf{L}^{3}(\Omega)}\|\nabla(\mathbf{z}_{\bar{\boldsymbol{u}}_{\ell}} - \hat{\mathbf{z}})\|_{\mathbf{L}^2(\Omega)}\\
& \lesssim \|\nabla\mathbf{w}\|_{\mathbf{L}^{\mathsf{p}'}(\Omega)}\|\mathbf{y}_{\bar{\boldsymbol{u}}_{\ell}} - \bar{\mathbf{y}}_{\ell}\|_{\mathbf{L}^{2}(\Omega)}.
\end{align*}
Similarly, since div $\mathbf{y}_{\bar{\boldsymbol{u}}_{\ell}}=0$, we have 
\begin{align*}
|b(\mathbf{y}_{\bar{\boldsymbol{u}}_{\ell}};\mathbf{w},\mathbf{z}_{\bar{\boldsymbol{u}}_{\ell}} - \hat{\mathbf{z}})|
= 
|b(\mathbf{y}_{\bar{\boldsymbol{u}}_{\ell}};\mathbf{z}_{\bar{\boldsymbol{u}}_{\ell}} - \hat{\mathbf{z}},\mathbf{w})|
\leq & ~
\|\mathbf{y}_{\bar{\boldsymbol{u}}_{\ell}}\|_{\mathbf{L}^{\infty}(\Omega)}\|\nabla(\mathbf{z}_{\bar{\boldsymbol{u}}_{\ell}} - \hat{\mathbf{z}})\|_{\mathbf{L}^{2}(\Omega)}\|\mathbf{w}\|_{\mathbf{L}^{2}(\Omega)} \\
\lesssim & ~ \|\nabla\mathbf{w}\|_{\mathbf{L}^{\mathsf{p}'}(\Omega)}\|\mathbf{y}_{\bar{\boldsymbol{u}}_{\ell}} - \bar{\mathbf{y}}_{\ell}\|_{\mathbf{L}^{2}(\Omega)}.
\end{align*}
The embedding $\mathbf{W}_0^{1,\mathsf{p}'}(\Omega) \hookrightarrow \mathbf{L}^{2}(\Omega)$ ($\mathsf{p}' < \tfrac{n}{n-1}$ sufficiently close to $\tfrac{n}{n-1}$) implies that 
\[
   |(\mathbf{y}_{\bar{\boldsymbol{u}}_{\ell}} - \bar{\mathbf{y}}_{\ell},\mathbf{w})_{\mathbf{L}^2(\Omega)}|
   \lesssim
   \|\mathbf{y}_{\bar{\boldsymbol{u}}_{\ell}} - \bar{\mathbf{y}}_{\ell}\|_{\mathbf{L}^{2}(\Omega)}\|\nabla \mathbf{w}\|_{\mathbf{L}^{\mathsf{p}'}(\Omega)}.
\]
Using again that div $\mathbf{y}_{\bar{\boldsymbol{u}}_{\ell}}=0$ we obtain
\begin{align*}
 & |b(\mathbf{y}_{\bar{\boldsymbol{u}}_{\ell}} - \bar{\mathbf{y}}_{\ell};\mathbf{w},\hat{\mathbf{z}})| 
\leq 
|b(\mathbf{y}_{\bar{\boldsymbol{u}}_{\ell}} - \bar{\mathbf{y}}_{\ell};\hat{\mathbf{z}},\mathbf{w})| +
|((\text{div }\bar{\mathbf{y}}_{\ell})\hat{\mathbf{z}},\mathbf{w})_{\mathbf{L}^{2}(\Omega)}| \\
& \quad \lesssim  
\left(\|\mathbf{y}_{\bar{\boldsymbol{u}}_{\ell}} - \bar{\mathbf{y}}_{\ell}\|_{\mathbf{L}^{n}(\Omega)}\|\nabla \hat{\mathbf{z}}\|_{\mathbf{L}^{\mathsf{p}}(\Omega)}
+
\|\text{div }\bar{\mathbf{y}}_{\ell}\|_{\mathbf{L}^{\mu}}\|\hat{\mathbf{z}}\|_{\mathbf{L}^{\infty}(\Omega)}\right)\|\mathbf{w}\|_{\mathbf{L}^{n\mathsf{p}\prime/(n - \mathsf{p}\prime)}(\Omega)}\\
& \quad \lesssim  (\|\mathbf{y}_{\bar{\boldsymbol{u}}_{\ell}} - \bar{\mathbf{y}}_{\ell}\|_{\mathbf{L}^{n}(\Omega)} + \|\text{div }\bar{\mathbf{y}}_{\ell}\|_{\mathbf{L}^{\mu}(\Omega)})\|\nabla \mathbf{w}\|_{\mathbf{L}^{\mathsf{p}'}(\Omega)},
\end{align*}
where $\mu = \tfrac{n\mathsf{p}'}{n(\mathsf{p}'-1) +\mathsf{p}'} = \tfrac{n\mathsf{p}}{n+\mathsf{p}} > \tfrac{n}{2}$.
The remaining term is estimated as follows:
\begin{align*}
&|b(\mathbf{w};\mathbf{y}_{\bar{\boldsymbol{u}}_{\ell}} - \bar{\mathbf{y}}_{\ell},\hat{\mathbf{z}})|
\leq 
|b(\mathbf{w};\hat{\mathbf{z}},\mathbf{y}_{\bar{\boldsymbol{u}}_{\ell}} - \bar{\mathbf{y}}_{\ell})|
+
|((\text{div }\mathbf{w})\hat{\mathbf{z}},\mathbf{y}_{\bar{\boldsymbol{u}}_{\ell}} - \bar{\mathbf{y}}_{\ell})_{\mathbf{L}^{2}(\Omega)}|\\
& \lesssim 
\|\mathbf{w}\|_{\mathbf{L}^{n\mathsf{p}\prime/(n - \mathsf{p}\prime)}(\Omega)}\|\nabla\hat{\mathbf{z}}\|_{\mathbf{L}^{\mathsf{p}}(\Omega)}\|\mathbf{y}_{\bar{\boldsymbol{u}}_{\ell}} - \bar{\mathbf{y}}_{\ell}\|_{\mathbf{L}^{n}(\Omega)}\\
& \quad + \|\text{div }\mathbf{w}\|_{\mathbf{L}^{\mathsf{p}'}(\Omega)}\|\hat{\mathbf{z}}\|_{\mathbf{L}^{\infty}(\Omega)}\|\mathbf{y}_{\bar{\boldsymbol{u}}_{\ell}} - \bar{\mathbf{y}}_{\ell}\|_{\mathbf{L}^{\mathsf{p}}(\Omega)}
\lesssim \|\mathbf{y}_{\bar{\boldsymbol{u}}_{\ell}} - \bar{\mathbf{y}}_{\ell}\|_{\mathbf{L}^{\mathsf{p}}(\Omega)} \|\nabla \mathbf{w}\|_{\mathbf{L}^{\mathsf{p}'}(\Omega)}.
\end{align*}
Therefore, the use of \cite[Theorem 2.9]{Brown_Shen}  (see also \cite[eq. (1.52)]{MR2987056}) results in 
\[
\|\nabla(\mathbf{z}_{\bar{\boldsymbol{u}}_{\ell}} - \hat{\mathbf{z}})\|_{\mathbf{L}^{\mathsf{p}}(\Omega)}
\lesssim
\|\mathbf{y}_{\bar{\boldsymbol{u}}_{\ell}} - \bar{\mathbf{y}}_{\ell}\|_{\mathbf{L}^{2}(\Omega)} + \|\text{div }\bar{\mathbf{y}}_{\ell}\|_{\mathbf{L}^{\mu}(\Omega)} + \|\mathbf{y}_{\bar{\boldsymbol{u}}_{\ell}} - \bar{\mathbf{y}}_{\ell}\|_{\mathbf{L}^{\mathsf{p}}(\Omega)}.
\]
Consequently, in view of the embedding $\mathbf{W}_0^{1,\mathsf{p}}(\Omega)\hookrightarrow \mathbf{C}(\bar{\Omega})$, we conclude that 
\[
\|\mathbf{z}_{\bar{\boldsymbol{u}}_{\ell}} - \hat{\mathbf{z}}\|_{\mathbf{L}^{\infty}(\Omega)}
\lesssim
\|\mathbf{y}_{\bar{\boldsymbol{u}}_{\ell}} - \bar{\mathbf{y}}_{\ell}\|_{\mathbf{L}^{2}(\Omega)} + \|\text{div }\bar{\mathbf{y}}_{\ell}\|_{\mathbf{L}^{\mu}(\Omega)} + \|\mathbf{y}_{\bar{\boldsymbol{u}}_{\ell}} - \bar{\mathbf{y}}_{\ell}\|_{\mathbf{L}^{\mathsf{p}}(\Omega)}.
\]
We obtain the desired estimate using, in the latter bound, the error estimate \eqref{eq:reliab_NS_Volker} with $t' = 2$ and $t' = \mathsf{p}$.
\end{proof}

Let $(\bar{\mathbf{z}}_{\ell},\bar{r}_{\ell}) \in \mathbf{V}_{\ell}\times Q_{\ell}$ be the unique solution to \eqref{eq:discrete_adjoint_equation}.
For every $T\in\mathcal{T}_{\ell}$, we consider the local error indicators 
\begin{align*}
\eta_{\textrm{adj},T}
:= & \, 
h_T^2\|\bar{\mathbf{y}}_{\ell} - \mathbf{y}_\Omega + \nu\Delta\bar{\mathbf{z}}_{\ell}-(\nabla \bar{\mathbf{y}}_{\ell})^{\intercal}\bar{\mathbf{z}}_{\ell} + (\bar{\mathbf{y}}_{\ell}\cdot \nabla)\bar{\mathbf{z}}_{\ell} + \text{div}(\bar{\mathbf{y}}_{\ell})\bar{\mathbf{z}}_{\ell} - \nabla \bar{r}_{\ell}\|_{\mathbf{L}^\infty(T)}\\
& \, + h_{T}\|\text{div }\bar{\mathbf{z}}_{\ell}\|_{L^\infty(T)} + \frac{1}{2} h_T\|\llbracket (\nu\nabla \bar{\mathbf{z}}_{\ell} + (\bar{\mathbf{y}}_{\ell} \otimes \bar{\mathbf{z}}_{\ell}) - \bar{r}_{\ell}\mathbb{I}_{n} )\cdot \mathbf{n}\rrbracket\|_{\mathbf{L}^\infty(\partial T \setminus \partial \Omega)},
\end{align*}
and the a posteriori error estimator $\eta_{\textrm{adj},\infty}:= \max_{T\in\mathcal{T}_{\ell}}\eta_{\textrm{adj},T}$.

\begin{lemma}[auxiliary estimate II]\label{lemma:a_post_adj_hat_discrete}
Suppose that assumptions \ref{ass:small_mesh}, \eqref{eq:assump_grady_small}, and \eqref{eq:ass_on_yl} hold. 
Let $(\hat{\mathbf{z}},\hat{r}) \in \mathbf{H}_0^1(\Omega)\!\times\! L_0^2(\Omega)$ be the solution to \eqref{eq:cont_adj_aux2} with $\mathbf{G} = \bar{\mathbf{y}}_{\ell} \!-\! \mathbf{y}_{\Omega}$. Then
\[
\|\hat{\mathbf{z}} - \bar{\mathbf{z}}_{\ell}\|_{\mathbf{L}^{\infty}(\Omega)} \lesssim |\log h_{\min}|^{4/n}\eta_{\textnormal{adj},\infty}.
\]
\end{lemma}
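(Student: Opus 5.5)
We will follow the standard duality (regularized Green's function) approach for pointwise a posteriori estimates. Fix a point $x_0\in\bar T_0$ and a component $i$ at which $|(\hat{\mathbf{z}}-\bar{\mathbf{z}}_\ell)_i|$ is (essentially) maximal, and introduce a regularized delta $\delta_{x_0}\in C_0^\infty(T_0)$ supported in the element $T_0\ni x_0$, with $\int\delta_{x_0}=1$ and $\|\delta_{x_0}\|_{L^1}\lesssim 1$. Since $\hat{\mathbf{z}}-\bar{\mathbf{z}}_\ell$ is not polynomial on $T_0$, we first use the continuity $\hat{\mathbf{z}}\in\mathbf{C}(\bar\Omega)$ from Proposition \ref{prop:reg_hatz}, together with a local argument on $T_0$ (interpolation plus an inverse estimate), to reduce $\|\hat{\mathbf{z}}-\bar{\mathbf{z}}_\ell\|_{\mathbf{L}^\infty(\Omega)}$ to $|((\hat{\mathbf{z}}-\bar{\mathbf{z}}_\ell)_i,\delta_{x_0})_{\Omega}|$ plus local residual terms bounded by $\eta_{\textrm{adj},T_0}$.

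Next we define the dual (Green's) problem. Let $(\mathbf{G}_{x_0},g_{x_0})\in\mathbf{H}_0^1(\Omega)\times L_0^2(\Omega)$ solve the linearized Navier--Stokes problem that is the formal adjoint of \eqref{eq:cont_adj_aux2}:
\[
\nu(\nabla\mathbf{G}_{x_0},\nabla\mathbf{v})_{\mathbf{L}^2(\Omega)}+b(\bar{\mathbf{y}}_\ell;\mathbf{G}_{x_0},\mathbf{v})+b(\mathbf{G}_{x_0};\bar{\mathbf{y}}_\ell,\mathbf{v})-(g_{x_0},\textnormal{div }\mathbf{v})_\Omega=(\delta_{x_0}\mathbf{e}_i,\mathbf{v})_{\mathbf{L}^2(\Omega)},
\]
together with $(q,\textnormal{div }\mathbf{G}_{x_0})_\Omega=0$. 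This problem is well posed by the coercivity argument of Proposition \ref{prop:wp_reg_breve_z} under \eqref{eq:assump_grady_small} and \eqref{eq:ass_on_yl}. Testing with $\mathbf{v}=\hat{\mathbf{z}}-\bar{\mathbf{z}}_\ell$ and using the equation \eqref{eq:cont_adj_aux2} with $\mathbf{G}=\bar{\mathbf{y}}_\ell-\mathbf{y}_\Omega$, Galerkin orthogonality with respect to \eqref{eq:discrete_adjoint_equation}, and Scott--Zhang interpolants $\mathbf{I}_\ell\mathbf{G}_{x_0}$ and $I_\ell g_{x_0}$, we obtain an error representation. We then integrate by parts elementwise. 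The term $b(\bar{\mathbf{y}}_\ell;\mathbf{w},\bar{\mathbf{z}}_\ell)$ does not have the skew form because $\textnormal{div }\bar{\mathbf{y}}_\ell\neq0$; it produces exactly the element residual $(\bar{\mathbf{y}}_\ell\cdot\nabla)\bar{\mathbf{z}}_\ell+\textnormal{div}(\bar{\mathbf{y}}_\ell)\bar{\mathbf{z}}_\ell$ and the flux jump $\bar{\mathbf{y}}_\ell\otimes\bar{\mathbf{z}}_\ell$ appearing in $\eta_{\textrm{adj},T}$. The divergence residual is paired with $g_{x_0}-I_\ell g_{x_0}$. Hölder's inequality with $L^\infty$ on the residuals and $L^1$ on the dual functions then gives
\[
|((\hat{\mathbf{z}}-\bar{\mathbf{z}}_\ell)_i,\delta_{x_0})|\lesssim \eta_{\textrm{adj},\infty}\sum_{T}\Big(h_T^{-2}\|\mathbf{G}_{x_0}-\mathbf{I}_\ell\mathbf{G}_{x_0}\|_{\mathbf{L}^1(T)}+h_T^{-1}\|\nabla(\mathbf{G}_{x_0}-\mathbf{I}_\ell\mathbf{G}_{x_0})\|_{\mathbf{L}^1(\partial T)}+h_T^{-1}\|g_{x_0}\|_{L^1(T)}\Big),
\]
and local interpolation estimates bound the sum by $|\mathbf{G}_{x_0}|_{\mathbf{W}^{2,1}(\Omega)}+\|g_{x_0}\|_{W^{1,1}(\Omega)}$, or by a weighted version of it.

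The main obstacle is the $W^{2,1}$-type bound on the regularized Green's function, with a logarithmic dependence on the regularization scale $h_{\min}$. We would first treat the Stokes part, using known estimates for the regularized Green's matrix of the Stokes problem on convex polytopes (as in the max-norm literature cited, \cite{MR4101369}). These give $|\mathbf{G}|_{\mathbf{W}^{2,1}}+\|g\|_{W^{1,1}}\lesssim|\log h_{\min}|^{\kappa}$; alternatively one can argue dyadically, with $W^{2,t}$ regularity for $t\to1$ and constants of order $(t-1)^{-1}$, choosing $t-1\sim|\log h_{\min}|^{-1}$. The convective terms $b(\bar{\mathbf{y}}_\ell;\mathbf{G},\cdot)+b(\mathbf{G};\bar{\mathbf{y}}_\ell,\cdot)$ are then moved to the right-hand side as a perturbation. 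Their $L^t$ norm is controlled by $\|\mathbf{G}\|_{\mathbf{W}^{1,t}}$ times norms of $\bar{\mathbf{y}}_\ell$, which are uniformly bounded under Assumption \ref{ass:small_mesh}; for $n=3$ one uses \eqref{eq:assumption_y_12/5}-type bounds. This requires a bootstrap through Sobolev embeddings in dimension $n$, and each Hölder/embedding step loses a power of $|\log h_{\min}|$. We expect the accumulated powers to give $|\log h_{\min}|^{4/n}$; tracking these exponents carefully is the delicate part.
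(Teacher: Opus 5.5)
Your error representation via Galerkin orthogonality and the elementwise residual/interpolation bookkeeping are fine. There are, however, two genuine gaps.

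\textbf{The localization step fails.} Write $\boldsymbol{\epsilon}:=\hat{\mathbf{z}}-\bar{\mathbf{z}}_\ell$. With $\delta_{x_0}$ supported in the single element $T_0$, you cannot bound $|\boldsymbol{\epsilon}_i(x_0)-(\boldsymbol{\epsilon}_i,\delta_{x_0})_\Omega|$ by $\eta_{\textrm{adj},T_0}$ via interpolation and an inverse estimate, because $\boldsymbol{\epsilon}$ is not piecewise polynomial. Even if $\delta_{x_0}$ reproduces point values of polynomials on $T_0$, what remains is $\|\boldsymbol{\epsilon}-\Pi\boldsymbol{\epsilon}\|_{\mathbf{L}^\infty(T_0)}$. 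This is a local interpolation error of the continuous solution, and local residuals do not control it (the error is polluted from the whole domain). The only quantitative handle is global: $\|\nabla \boldsymbol{\epsilon}\|_{\mathbf{L}^{\mathsf p}(\Omega)}\lesssim h_{\min}^{-1}\eta_{\textrm{adj},\infty}$, obtained from $\|\mathbf{R}_{\textrm{adj}}\|_{\mathbf{W}^{-1,\infty}(\Omega)}$, $\|\textnormal{div}\,\bar{\mathbf{z}}_\ell\|_{L^\infty(\Omega)}$ and $W^{1,\mathsf p}$-regularity. On an element-sized support this gives only $h_{T_0}^{1-n/\mathsf p}h_{\min}^{-1}\eta_{\textrm{adj},\infty}$, which is not $\lesssim\eta_{\textrm{adj},\infty}$. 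The paper does use this Hölder bound, but it shrinks the support of the regularized delta to a ball of radius $\rho\le h_{\min}^{\tau}$, with $\tau$ chosen so that $\rho^{1-n/\mathsf p}\le h_{\min}$. The price is that the regularized Green's function now depends on $\rho$, and this is exactly where the factor $|\log h_{\min}|$ enters.

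\textbf{The key estimate is only conjectured, and a decomposition is missing.} You need $\|\mathbf{G}_{x_0}\|_{\mathbf{W}^{2,1}(\Omega)}+\|g_{x_0}\|_{W^{1,1}(\Omega)}\lesssim|\log h_{\min}|^{4/n}$ for the Green's function of the linearized operator with the discrete, non-solenoidal coefficient $\bar{\mathbf{y}}_\ell$, but you do not prove it. Your bootstrap would also use \eqref{eq:assumption_y_12/5} and a uniform $\mathbf{L}^\infty$ bound on $\bar{\mathbf{y}}_\ell$, neither of which is a hypothesis of the lemma. Moreover, the exponent $4/n$ does not come from logarithms accumulating through Hölder/embedding steps. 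If a perturbation argument worked, the remainder would be uniformly bounded in some $\mathbf{W}^{2,q}$ with $q>1$, and the power would simply be that of the Stokes Green's function.

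The paper avoids the linearized Green's function altogether:
\begin{enumerate}
\item Let $(\boldsymbol\Psi,\xi)$ solve a Stokes problem with data $b(\bar{\mathbf{y}}_\ell;\cdot,\boldsymbol{\epsilon})+b(\cdot;\bar{\mathbf{y}}_\ell,\boldsymbol{\epsilon})$, and set $\mathbf{K}:=\boldsymbol{\epsilon}+\boldsymbol\Psi$.
\item Then $\boldsymbol\Psi$ solves \eqref{eq:cont_adj_aux2} with data $b(\bar{\mathbf{y}}_\ell;\cdot,\mathbf{K})+b(\cdot;\bar{\mathbf{y}}_\ell,\mathbf{K})\in\mathbf{W}^{-1,\mathsf p}(\Omega)$. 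Proposition \ref{prop:reg_hatz} gives $\|\boldsymbol\Psi\|_{\mathbf{L}^\infty(\Omega)}\lesssim\|\mathbf{K}\|_{\mathbf{L}^\infty(\Omega)}$, hence $\|\boldsymbol{\epsilon}\|_{\mathbf{L}^\infty(\Omega)}\lesssim\|\mathbf{K}\|_{\mathbf{L}^\infty(\Omega)}$.
\item The pair $(\mathbf{K},k)$ solves the pure Stokes problem \eqref{eq:problemKk}, whose data are exactly the Galerkin-orthogonal residual $\mathbf{R}_{\textrm{adj}}$ and $-\textnormal{div}\,\bar{\mathbf{z}}_\ell$.
\end{enumerate}
Only the regularized Stokes Green's function is then needed, with the bound $\|\boldsymbol\psi\|_{\mathbf{W}^{2,1}(\Omega)}+\|\mathfrak r\|_{W^{1,1}(\Omega)}\lesssim|\log h_{\min}|^{4/n}$ taken from \cite[Corollary 3.5]{Larsson_Svensson}. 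No extra assumptions on $\bar{\mathbf{y}}_\ell$ beyond those listed in the lemma are required.
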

\begin{proof}
Define $\mathbf{F} :=  b(\bar{\mathbf{y}}_{\ell};\cdot,\hat{\mathbf{z}} - \bar{\mathbf{z}}_{\ell}) + b(\cdot;\bar{\mathbf{y}}_{\ell},\hat{\mathbf{z}} - \bar{\mathbf{z}}_{\ell}) \in \mathbf{H}^{-1}(\Omega)$ and the pair $(\boldsymbol{\Psi}, \xi)\in \mathbf{H}_0^{1}(\Omega)\times L_0^{2}(\Omega)$ as the unique solution to
\[
\nu(\nabla \mathbf{w}, \nabla \boldsymbol{\Psi})_{\mathbf{L}^2(\Omega)} - (\xi,\text{div } \mathbf{w})_{\Omega}  = \langle \mathbf{F}, \mathbf{w}\rangle, 
\qquad
(s,\text{div } \boldsymbol{\Psi})_{\Omega}  =   0, 
\]
for all $(\mathbf{w},s) \in \mathbf{H}_0^1(\Omega)\times L_0^2(\Omega)$.
With $\boldsymbol{\Psi}$ at hand, we introduce the pair $(\mathbf{K},k):= (\hat{\mathbf{z}} - \bar{\mathbf{z}}_{\ell} + \boldsymbol{\Psi}, \hat{r} - \bar{r}_\ell + \xi)$ and observe that
\begin{align*}
\nu(\nabla \mathbf{w}, \nabla \boldsymbol{\Psi})_{\mathbf{L}^2(\Omega)} \!+\! b(\bar{\mathbf{y}}_{\ell};\mathbf{w},\boldsymbol{\Psi}) \!+\! b(\mathbf{w};\bar{\mathbf{y}}_{\ell},\boldsymbol{\Psi}) \!-\! (\xi,\text{div } \mathbf{w})_{\Omega} & = b(\bar{\mathbf{y}}_{\ell};\mathbf{w},\mathbf{K}) \!+\! b(\mathbf{w};\bar{\mathbf{y}}_{\ell},\mathbf{K}) \\
(s,\text{div } \boldsymbol{\Psi})_{\Omega} & =   0
\end{align*}
for all $(\mathbf{w},s) \in \mathbf{H}_0^1(\Omega)\times L_0^2(\Omega)$.
In particular, the pair $(\boldsymbol{\Psi}, \xi)$ solves problem \eqref{eq:cont_adj_aux2} with $\mathbf{G} = b(\bar{\mathbf{y}}_{\ell};\cdot,\mathbf{K}) + b(\cdot;\bar{\mathbf{y}}_{\ell},\mathbf{K})\in \mathbf{W}^{-1,\mathsf{p}}(\Omega)$ for some $\mathsf{p}>n$ close enough to $n$.
Consequently, in light of Proposition \ref{prop:reg_hatz}, we infer that $\|\boldsymbol{\Psi}\|_{\mathbf{L}^{\infty}(\Omega)}  \lesssim  \|\nabla \boldsymbol{\Psi}\|_{\mathbf{L}^{\mathsf{p}}(\Omega)} \lesssim   \|\mathbf{K}\|_{\mathbf{L}^{\infty}(\Omega)}$, from which it follows that
\begin{equation}\label{eq:hat_z_bar_zl_k}
    \|\hat{\mathbf{z}} - \bar{\mathbf{z}}_{\ell}\|_{\mathbf{L}^{\infty}(\Omega)}
    \lesssim
    \|\mathbf{K} - \boldsymbol{\Psi}\|_{\mathbf{L}^{\infty}(\Omega)}
    \lesssim
    \|\mathbf{K}\|_{\mathbf{L}^{\infty}(\Omega)}.
\end{equation}

We devote the remainder of this proof to estimating the term $\|\mathbf{K}\|_{\mathbf{L}^{\infty}(\Omega)}$.
To accomplish this task, we let $x_0\in \Omega$ and $i\in\{1,\ldots,n\}$ be such that $\|\mathbf{K}\|_{\mathbf{L}^{\infty}(\Omega)} = |\mathbf{K}_{i}(x_0)|$.
We now follow \cite[Section 2]{Larsson_Svensson} and let $\delta = \delta_{x_0,\rho/2}$ be a regularization of the Dirac distribution at $x_0$.
More precisely, $\delta$ satisfies \cite[eq. (2.9)]{Larsson_Svensson}
\[
    \text{supp}(\delta)\subset B_{x_0}(\tfrac{\rho}{2}), \qquad \int_{\mathbb{R}^{n}}\delta = 1, \qquad 0 \leq \delta \lesssim \rho^{-n},
\]
where $B_{x_0}(\tfrac{\rho}{2})$ denotes the ball with center in $x_0$ and radius $\tfrac{\rho}{2}$ chosen such that $\rho \leq h_{\text{min}}^{\tau}$ where $\tau > 0$ will be specified later.
It also satisfies $\|\delta\|_{W^{l,q}(\Omega)} \lesssim \rho^{-n(1-1/q)-l}$; see \cite[eq. (2.11)]{Larsson_Svensson}.
Since $\mathbf{K}\in \mathbf{C}(\bar{\Omega})$, the mean value theorem reveals that there exists $x_1 \in B_{x_0}(\tfrac{\rho}{2})$ such that $(\mathbf{K}_{i},\delta)_{\Omega}= \mathbf{K}_{i}(x_1)$.
Hence,
\begin{equation}\label{eq:error_e_I_II}
    \|\mathbf{K}\|_{\mathbf{L}^{\infty}(\Omega)}
    \leq
    |\mathbf{K}_{i}(x_0) - \mathbf{K}_{i}(x_1)| + |(\mathbf{K}_{i},\delta)_{\Omega}|=:\mathrm{I} + \mathrm{II}.
\end{equation}
Using the fact that $\mathbf{K}\in \mathbf{W}_0^{1,\mathsf{p}}(\Omega)\hookrightarrow \mathbf{C}^{0,\gamma}(\bar{\Omega})$ with $\gamma \in [0,1 - n/\mathsf{p}]$, we have that
\[
    \mathrm{I}
    \lesssim \rho^{\gamma}\|\mathbf{K}_{i}\|_{C^{0,\gamma}(B_{x_0}(\frac{\rho}{2})\cap \Omega)} \lesssim \rho^{\gamma}\|\nabla \mathbf{K}\|_{\mathbf{L}^{\mathsf{p}}(\Omega)}.
\]
A direct computation reveals that $(\mathbf{K},k)\in \mathbf{W}_0^{1,\mathsf{p}}(\Omega)\times L^{\mathsf{p}}(\Omega)/\mathbb{R}$ ($\mathsf{p}>n$) solves
\begin{equation}\label{eq:problemKk}
- \nu \Delta \mathbf{K} + \nabla k = \mathbf{R}_{\textrm{adj}}, \qquad \text{div } \mathbf{K} =  -\text{div } \bar{\mathbf{z}}_{\ell},
\end{equation}
where $
    \mathbf{R}_{\textrm{adj}} := \bar{\mathbf{y}}_{\ell} - \mathbf{y}_\Omega + \nu\Delta\bar{\mathbf{z}}_{\ell}-(\nabla \bar{\mathbf{y}}_{\ell})^{\intercal}\bar{\mathbf{z}}_{\ell} + (\bar{\mathbf{y}}_{\ell}\cdot \nabla)\bar{\mathbf{z}}_{\ell} + \text{div}(\bar{\mathbf{y}}_{\ell})\bar{\mathbf{z}}_{\ell} - \nabla \bar{r}_{\ell}.
$
Using that $\mathbf{R}_{\textrm{adj}} \in \mathbf{W}^{-1,\mathsf{p}}(\Omega)$ and $\int_{\Omega} \text{div }\bar{\mathbf{z}}_{\ell}=0$, we conclude, in view of \cite[Theorem 2.9]{Brown_Shen} that 
\begin{align*}
    \|\nabla \mathbf{K}\|_{\mathbf{L}^{\mathsf{p}}(\Omega)} + \|k\|_{L^{\mathsf{p}}(\Omega)} 
    \lesssim
    \|\mathbf{R}_{\textrm{adj}}\|_{\mathbf{W}^{-1,\mathsf{p}}(\Omega)} + \|\text{div }\bar{\mathbf{z}}_{\ell}\|_{L^{\mathsf{p}}(\Omega)}.
\end{align*} 
We observe that $\|\mathbf{R}_{\textrm{adj}}\|_{\mathbf{W}^{-1,\mathsf{p}}(\Omega)} \lesssim \|\mathbf{R}_{\textrm{adj}}\|_{\mathbf{W}^{-1,\infty}(\Omega)}$. 
To estimate $\|\mathbf{R}_{\textrm{adj}}\|_{\mathbf{W}^{-1,\infty}(\Omega)}$, we use Galerkin orthogonality, element-wise integration by parts, standard interpolation results, and the definition of $\eta_{\textrm{adj},\infty}$. 
These arguments yield
\begin{align*}
     & \|\mathbf{R}_{\textrm{adj}}\|_{\mathbf{W}^{-1,\infty}(\Omega)} = \sup_{\mathbf{w}\in \mathbf{W}_0^{1,1}(\Omega)}\frac{\langle \mathbf{R}_{\textrm{adj}}, \mathbf{w}\rangle }{\|\nabla \mathbf{w}\|_{\mathbf{L}^{1}(\Omega)}} \\
     & \lesssim  
      \sup_{\mathbf{w}\in \mathbf{W}_0^{1,1}(\Omega)}\frac{\sum_{T\in \mathcal{T}_{\ell}} \left(\|\mathbf{R}_{\textrm{adj}}|_{T}\|_{\mathbf{L}^{\infty}(T)} + \frac{h_{T}^{-1}}{2} \|\mathbf{J}_{\textrm{adj}}|_{T}\|_{\mathbf{L}^\infty(\partial T \setminus \partial \Omega)}\right)\|\mathbf{w} - \mathbf{I}_{\ell}\mathbf{w}\|_{\mathbf{L}^{1}(T)}}{\|\nabla \mathbf{w}\|_{\mathbf{L}^{1}(\Omega)}}\\
    & \lesssim \max_{T\in \mathcal{T}_{\ell}}\frac{\eta_{\textrm{adj},T}}{h_{T}}
    \lesssim
    h_{\text{min}}^{-1}\eta_{\textrm{adj},\infty}.
\end{align*}
Here, $\mathbf{J}_{\textrm{adj}}:=\llbracket (\nu\nabla \bar{\mathbf{z}}_{\ell} + (\bar{\mathbf{y}}_{\ell} \otimes \bar{\mathbf{z}}_{\ell}) - \bar{r}_{\ell}\mathbb{I}_{d} )\cdot \mathbf{n}\rrbracket$ and $\mathbf{I}_{\ell}: \mathbf{W}_0^{1,1}(\Omega) \to \mathbf{V}_{\ell}$ denotes some suitable quasi-interpolation operator, for example, the Scott-Zhang interpolant (\cite{MR1011446} and \cite[Section 4.8]{MR2373954}).
The latter, in combination with the fact that $\|\text{div }\bar{\mathbf{z}}_{\ell}\|_{L^{\mathsf{p}}(\Omega)} \lesssim h_\text{min}^{-1}\eta_{\textrm{adj},\infty}$, results in $\mathrm{I} \lesssim \rho^{\gamma} h_{\text{min}}^{-1}\eta_{\textrm{adj},\infty}$.
Hence, taking $\tau>0$ such that $\gamma\tau=1$, we conclude that $\mathrm{I} \leq \eta_{\textrm{adj},\infty}$.

To bound the term $\mathrm{II}$ in \eqref{eq:error_e_I_II}, we proceed as in \cite[Lemma 2.1]{Larsson_Svensson}.
First, we define $(\boldsymbol{\psi},\mathfrak{r})\in \mathbf{H}_0^1(\Omega)\times L_0^2(\Omega)$ as the unique solution to the dual Stokes problem
\begin{equation}\label{eq:dual_problemKk}
    \nu(\nabla \boldsymbol{\psi}, \nabla\mathbf{v})_{\mathbf{L}^2(\Omega)}  + (\mathfrak{r},\text{div } \mathbf{v})_{\Omega} = (\delta e_{i}, \mathbf{v})_{\mathbf{L}^{2}(\Omega)}, \quad -(\text{div } \boldsymbol{\psi}, q)_{\Omega} = 0,
\end{equation}
for all $(\mathbf{v},q) \in \mathbf{H}_0^1(\Omega)\times L_0^2(\Omega)$, where $e_i$ denotes the $i$-th canonical unit vector.
Then, in view of \eqref{eq:dual_problemKk}, \eqref{eq:problemKk}, and Galerkin orthogonality, we observe that 
\begin{align*}
    \mathrm{II} 
    & \, =
    |\nu(\nabla \boldsymbol{\psi}, \nabla\mathbf{K})_{\mathbf{L}^2(\Omega)}  + (\mathfrak{r},\text{div } \mathbf{K})_{\Omega} - (\text{div } \boldsymbol{\psi}, k)_{\Omega}|,\\
    &\, = |\langle \mathbf{R}_{\textrm{adj}}, \boldsymbol{\psi}\rangle - (\text{div } \bar{\mathbf{z}}_{\ell}, \mathfrak{r})_{\Omega}|
    = |\langle \mathbf{R}_{\textrm{adj}}, \boldsymbol{\psi} - \mathbf{I}_{\ell}\boldsymbol{\psi}\rangle - (\text{div } \bar{\mathbf{z}}_{\ell}, \mathfrak{r} - I_{\ell}\mathfrak{r})_{\Omega}|,
\end{align*}
where $I_{\ell} : W^{1,1}(\Omega) \to Q_{\ell}$ denotes a suitable quasi-interpolation operator. 
This estimate, element-wise integration by parts, and standard interpolation estimates, give
\[
    \mathrm{II} \lesssim 
    \eta_{\textrm{adj},\infty}(\|\boldsymbol{\psi}\|_{\mathbf{W}^{2,1}(\Omega)} + \|\mathfrak{r}\|_{W^{1,1}(\Omega)}).
\]
Finally, we use that $\|\boldsymbol{\psi}\|_{\mathbf{W}^{2,1}(\Omega)} + \|\mathfrak{r}\|_{W^{1,1}(\Omega)}\lesssim |\log h_{\min}|^{4/n}$ (see \cite[Corollary 3.5]{Larsson_Svensson}) to arrive at $\mathrm{II} \lesssim |\log h_{\min}|^{4/n}\eta_{\textrm{adj},\infty}$.

Therefore, we have proved that $\mathrm{I} + \mathrm{II} \lesssim |\log h_{\min}|^{4/n}\eta_{\textrm{adj},\infty}$, which, in light of \eqref{eq:error_e_I_II} and \eqref{eq:hat_z_bar_zl_k}, concludes the proof.
\end{proof}


\subsection{A posteriori error estimate: optimal control problem}

\begin{theorem}[reliability estimate]\label{thm:rel_ocp}
Let $\bar{\mathbf{u}}\in \mathbf{U}_{ad}$ be a local nonsingular solution to \eqref{eq:minimize_cost_func} with $(\bar{\mathbf{y}},\bar{p})$ and $(\bar{\mathbf{z}},\bar{r})$ being the corresponding state and adjoint state variables, respectively. 
Suppose that assumptions \ref{ass:small_mesh}, \eqref{eq:assump_grady_small}, \eqref{eq:assumption_y_12/5}, and \eqref{eq:ass_on_yl} hold. 
Let $\bar{\boldsymbol{u}}_{\ell}\in \mathcal{O}(\bar{\mathbf{u}})$ be a solution to the semidiscrete optimal control problem provided by assumption \ref{ass:small_mesh} with $(\bar{\mathbf{y}}_{\ell},\bar{p}_\ell)$ and $(\bar{\mathbf{z}}_{\ell},\bar{r}_\ell)$ being the corresponding discrete state and discrete adjoint state variables, respectively.
If assumption \ref{controlgrowth} holds, then
\begin{align*}
    \|\bar{\mathbf{u}} - \bar{\boldsymbol{u}}_{\ell}\|_{\mathbf{L}^{1}(\Omega)}
    \lesssim
    (\eta_{\textrm{st},2} + \eta_{\textrm{st},\mathsf{p}} + |\log h_{\min}|^{4/n}\eta_{\textrm{adj},\infty} +  \|\textnormal{div }\bar{\mathbf{y}}_{\ell}\|_{\mathbf{L}^{\mu}(\Omega)})^{\gamma},
\end{align*}
where $\mathsf{p}> n$ is arbitrarily close to $n$, $\mu = \tfrac{n\mathsf{p}}{n+\mathsf{p}}>\tfrac{n}{2}$, and $\gamma \in (n/(n+2), 1]$.
\end{theorem}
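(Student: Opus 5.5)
The argument mirrors the proof of Theorem~\ref{thm:estimate_control}, with the a priori bounds replaced by the a posteriori estimates of Lemmas~\ref{lemma:a_post_adj_hat} and~\ref{lemma:a_post_adj_hat_discrete}.

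\emph{Step 1: first-order inequality.} Take $\mathbf{u}=\bar{\mathbf{u}}$ in the semidiscrete variational inequality \eqref{eq:discrete_var_ineq} for $\bar{\boldsymbol{u}}_{\ell}$, which gives $J_{\ell}'(\bar{\boldsymbol{u}}_{\ell})(\bar{\boldsymbol{u}}_{\ell}-\bar{\mathbf{u}})\le 0$. Apply the mean value theorem to $J'$ between $\bar{\mathbf{u}}$ and $\bar{\boldsymbol{u}}_{\ell}$, both of which lie in $\mathcal{O}(\bar{\mathbf{u}})$. Then add and subtract $J''(\bar{\mathbf{u}})(\bar{\boldsymbol{u}}_{\ell}-\bar{\mathbf{u}})^2$.

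\emph{Step 2: growth condition.} This step needs $\|\bar{\boldsymbol{u}}_{\ell}-\bar{\mathbf{u}}\|_{\mathbf{L}^1(\Omega)}$ to be small. I expect it to be the main obstacle, since it is not automatic in the adaptive setting. I would argue it from $\bar{\boldsymbol{u}}_{\ell}\in\mathcal{O}(\bar{\mathbf{u}})$ and Assumption~\ref{ass:small_mesh}, shrinking the neighborhood if necessary, in the same way Theorem~\ref{thm:convergence_of_sol} provides closeness. Once the difference is below the threshold $\delta$ of Lemma~\ref{lemma:sec_order}, with $\varepsilon=c/2$, the growth condition \eqref{controlgrowth} applies and the second-variation perturbation can be absorbed. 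Arguing exactly as in the display leading to \eqref{eq:u-uh--z-zh}, this yields
\[
\|\bar{\boldsymbol{u}}_{\ell}-\bar{\mathbf{u}}\|_{\mathbf{L}^1(\Omega)}^{1+\frac1\gamma}\lesssim (\mathbf{z}_{\bar{\boldsymbol{u}}_{\ell}}-\bar{\mathbf{z}}_{\ell},\bar{\boldsymbol{u}}_{\ell}-\bar{\mathbf{u}})_{\mathbf{L}^2(\Omega)}\le \|\mathbf{z}_{\bar{\boldsymbol{u}}_{\ell}}-\bar{\mathbf{z}}_{\ell}\|_{\mathbf{L}^\infty(\Omega)}\|\bar{\boldsymbol{u}}_{\ell}-\bar{\mathbf{u}}\|_{\mathbf{L}^1(\Omega)}.
\]
Here $\mathbf{z}_{\bar{\boldsymbol{u}}_{\ell}}$ is the auxiliary adjoint from \eqref{eq:aux_z_uh}. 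Its identification $J'(\bar{\boldsymbol{u}}_{\ell})\mathbf{g}=(\mathbf{z}_{\bar{\boldsymbol{u}}_{\ell}},\mathbf{g})_{\mathbf{L}^2(\Omega)}$ follows from \eqref{eq:charact_J'}, and $J_{\ell}'(\bar{\boldsymbol{u}}_{\ell})\mathbf{g}=(\bar{\mathbf{z}}_{\ell},\mathbf{g})_{\mathbf{L}^2(\Omega)}$. Dividing by $\|\bar{\boldsymbol{u}}_{\ell}-\bar{\mathbf{u}}\|_{\mathbf{L}^1(\Omega)}$ (the case where it vanishes is trivial) leaves $\|\bar{\boldsymbol{u}}_{\ell}-\bar{\mathbf{u}}\|_{\mathbf{L}^1(\Omega)}^{1/\gamma}\lesssim\|\mathbf{z}_{\bar{\boldsymbol{u}}_{\ell}}-\bar{\mathbf{z}}_{\ell}\|_{\mathbf{L}^\infty(\Omega)}$.

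\emph{Step 3: splitting the adjoint error.} Insert the intermediate continuous adjoint $\hat{\mathbf{z}}$, the solution of \eqref{eq:cont_adj_aux2} with $\mathbf{G}=\bar{\mathbf{y}}_{\ell}-\mathbf{y}_\Omega$, and use the triangle inequality:
\[
\|\mathbf{z}_{\bar{\boldsymbol{u}}_{\ell}}-\bar{\mathbf{z}}_{\ell}\|_{\mathbf{L}^\infty(\Omega)}\le\|\mathbf{z}_{\bar{\boldsymbol{u}}_{\ell}}-\hat{\mathbf{z}}\|_{\mathbf{L}^\infty(\Omega)}+\|\hat{\mathbf{z}}-\bar{\mathbf{z}}_{\ell}\|_{\mathbf{L}^\infty(\Omega)}.
\]
The first term is bounded by $\eta_{\textrm{st},2}+\eta_{\textrm{st},\mathsf{p}}+\|\textnormal{div}\,\bar{\mathbf{y}}_{\ell}\|_{\mathbf{L}^\mu(\Omega)}$ via Lemma~\ref{lemma:a_post_adj_hat}; its hypotheses, including \eqref{eq:assumption_y_12/5}, are exactly those assumed in the theorem. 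The second term is bounded by $|\log h_{\min}|^{4/n}\eta_{\textrm{adj},\infty}$ via Lemma~\ref{lemma:a_post_adj_hat_discrete}.

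\emph{Step 4: conclusion.} Combining Steps 2 and 3 and raising both sides to the power $\gamma$ gives the stated reliability bound.

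One point to verify along the way is that $\mathbf{y}_\Omega\in\mathbf{L}^{\mathsf{p}}(\Omega)$ is implicitly available. It is needed both for Lemma~\ref{lemma:sec_order} and for the $\mathbf{W}^{-1,\mathsf{p}}$ regularity of $\hat{\mathbf{z}}$ used in Lemma~\ref{lemma:a_post_adj_hat}. I would state it as part of the standing assumptions, consistent with the introduction.
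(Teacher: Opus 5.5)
Your proposal is correct and follows the paper's proof. The paper likewise obtains the analogue of \eqref{eq:u-uh--z-zh} by repeating the argument of Theorem~\ref{thm:estimate_control}, inserts $\hat{\mathbf{z}}$ via the triangle inequality, and concludes with Lemmas~\ref{lemma:a_post_adj_hat} and~\ref{lemma:a_post_adj_hat_discrete}. The two points you flag are left tacit in the paper. The $\mathbf{L}^1$-closeness is indeed secured by taking $\mathcal{O}(\bar{\mathbf{u}})$ inside a small $\mathbf{L}^2$-ball, since $\|\cdot\|_{\mathbf{L}^1(\Omega)}\lesssim\|\cdot\|_{\mathbf{L}^2(\Omega)}$. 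The extra integrability $\mathbf{y}_\Omega\in\mathbf{L}^{\mathsf{p}}(\Omega)$ is really needed only for Lemma~\ref{lemma:sec_order}: for $\hat{\mathbf{z}}$, the embedding $\mathbf{L}^2(\Omega)\hookrightarrow\mathbf{W}^{-1,\mathsf{p}}(\Omega)$ for some $\mathsf{p}>n$ already suffices.
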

\begin{proof}
Arguments similar to those leading to \eqref{eq:u-uh--z-zh} in Theorem \ref{thm:estimate_control} yield
\[
\|\bar{\boldsymbol{u}}_{\ell} - \bar{\mathbf{u}}\|_{\mathbf{L}^1(\Omega)}^{1+\frac{1}{\gamma}} 
\lesssim
(\mathbf{z}_{\bar{\boldsymbol{u}}_{\ell}} - \bar{\mathbf{z}}_{\ell},\bar{\boldsymbol{u}}_{\ell} - \bar{\mathbf{u}})_{\mathbf{L}^{2}(\Omega)},
\]
where $(\mathbf{z}_{\bar{\boldsymbol{u}}_{\ell}},r_{\bar{\boldsymbol{u}}_{\ell}}) \in \mathbf{H}_0^1(\Omega)\times L_0^{2}(\Omega)$ denotes the unique solution to problem \eqref{eq:aux_z_uh}. 
Invoke $(\hat{\mathbf{z}},\hat{r}) \in \mathbf{H}_0^1(\Omega)\times L_0^2(\Omega)$, the unique solution to \eqref{eq:cont_adj_aux2} with $\mathbf{G} = \bar{\mathbf{y}}_{\ell} - \mathbf{y}_{\Omega}$, and the triangle inequality to arrive at
\[
\|\bar{\boldsymbol{u}}_{\ell} - \bar{\mathbf{u}}\|_{\mathbf{L}^1(\Omega)}^{\frac{1}{\gamma}} 
\lesssim
\|\mathbf{z}_{\bar{\boldsymbol{u}}_{\ell}} - \hat{\mathbf{z}}\|_{\mathbf{L}^{\infty}(\Omega)} + \|\hat{\mathbf{z}} - \bar{\mathbf{z}}_{\ell}\|_{\mathbf{L}^{\infty}(\Omega)}.
\]
A direct application of Lemmas \ref{lemma:a_post_adj_hat} and \ref{lemma:a_post_adj_hat_discrete} concludes the proof.
\end{proof}

\section*{Appendix}
In the present section, we give an answer to whether a condition similar to \eqref{eq:cond_DH} can be formulated when $\gamma<1$. 
To accomplish this, we need the following two lemmas.

\begin{lemma}
Let $\phi\in W^{1,\infty}(\Omega)$ and $\gamma\in(0,1]$.
Assume that there exist constants $C>0$ and $\delta>0$ such that for a.a. $t\in(-\delta,\delta)\setminus\{0\}$.
\begin{equation}\label{eq_levelsetest}
\int_{\{\phi=t\}} \frac{1}{|\nabla \phi|}
\,d\mathcal H^{n-1}
\le C |t|^{\gamma-1}
\end{equation}
Assume, in addition, that $|[0<|\phi|<\delta,\ |\nabla\phi|=0]|=0$. Then, for every $0<\varepsilon<\delta$,
\[
|[0<|\phi|<\varepsilon]|
\le \frac{2C}{\gamma}\varepsilon^\gamma.
\]
If further $|[\phi=0]|=0$, then $|[|\phi|\le \varepsilon]|
\le \frac{2C}{\gamma}\varepsilon^\gamma$.
\end{lemma}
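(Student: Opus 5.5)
The natural tool is the coarea formula for the Lipschitz function $\phi\in W^{1,\infty}(\Omega)$. For any nonnegative measurable $g$ on $\Omega$ it states
\[
\int_\Omega g\,|\nabla\phi|\,dx=\int_{\mathbb{R}}\int_{\{\phi=t\}} g\,d\mathcal H^{n-1}\,dt .
\]
We apply it with
\[
g=\chi_{E_\varepsilon}\,\chi_{\{|\nabla\phi|>0\}}\,|\nabla\phi|^{-1},\qquad E_\varepsilon:=[0<|\phi|<\varepsilon],
\]
where $0<\varepsilon<\delta$. With this choice the left-hand side equals $|E_\varepsilon\cap\{|\nabla\phi|>0\}|$. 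By the hypothesis $|[0<|\phi|<\delta,\ |\nabla\phi|=0]|=0$, this is exactly $|E_\varepsilon|$.

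On the right-hand side, the inner integral vanishes unless $0<|t|<\varepsilon$. For such $t$ it is bounded by $\int_{\{\phi=t\}}|\nabla\phi|^{-1}\,d\mathcal H^{n-1}$, interpreted on $\{|\nabla\phi|>0\}$. Since the coarea integrand is only needed for a.e. $t$, the bound \eqref{eq_levelsetest} applies for a.a. $t\in(-\varepsilon,\varepsilon)\setminus\{0\}\subset(-\delta,\delta)\setminus\{0\}$. This gives
\[
|E_\varepsilon|\le \int_{-\varepsilon}^{\varepsilon} C|t|^{\gamma-1}\,dt = 2C\int_0^\varepsilon t^{\gamma-1}\,dt=\frac{2C}{\gamma}\varepsilon^\gamma,
\]
which is integrable precisely because $\gamma>0$.

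For the final claim, note that $[|\phi|\le\varepsilon]\subset[\phi=0]\cup[0<|\phi|<\varepsilon']$ for any $\varepsilon'\in(\varepsilon,\delta)$. If $|[\phi=0]|=0$, we obtain $|[|\phi|\le\varepsilon]|\le \frac{2C}{\gamma}(\varepsilon')^\gamma$. Letting $\varepsilon'\downarrow\varepsilon$ (or using continuity from above of the measure, $\bigcap_{\varepsilon'>\varepsilon}[0<|\phi|<\varepsilon']=[0<|\phi|\le\varepsilon]$) yields the bound. If $\varepsilon$ is close to $\delta$, we may instead apply the estimate with $\varepsilon'$ approaching $\varepsilon$ from above within $(\varepsilon,\delta)$, which is always possible since $\varepsilon<\delta$.

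The main point requiring care is the justification of the coarea formula with the singular weight $|\nabla\phi|^{-1}$. We handle it by restricting to $\{|\nabla\phi|>0\}$, where $g$ is a well-defined nonnegative measurable function, and by using monotone convergence: apply the formula to $g_k=\chi_{E_\varepsilon}\min\{k,|\nabla\phi|^{-1}\}\chi_{\{|\nabla\phi|>0\}}$ and let $k\to\infty$. The set where $\nabla\phi=0$ contributes nothing on either side. On the left this is by the hypothesis. On the right, $\mathcal H^{n-1}$-a.e. point of a.e. level set has $|\nabla\phi|>0$, so the restricted and unrestricted level-set integrals agree for a.e. $t$.
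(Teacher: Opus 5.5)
Your proof is correct and follows the same route as the paper. Both apply the coarea formula for the Lipschitz function $\phi$ with weight $|\nabla\phi|^{-1}$ on $[0<|\phi|<\varepsilon]$, use the critical-set hypothesis to recover the full measure on the left-hand side, and integrate the level-set bound $C|t|^{\gamma-1}$ over $(-\varepsilon,\varepsilon)$. Your restriction to $\{|\nabla\phi|>0\}$ and your limiting argument $\varepsilon'\downarrow\varepsilon$ for the closed sublevel set only make explicit details the paper leaves implicit.
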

\begin{proof}
By the Coarea formula for Lipschitz functions and the assumption
$|[0<|\phi|<\delta,\ |\nabla\phi|=0]|=0$, we obtain
\[
|[0<|\phi|<\varepsilon]|
=
\int_{-\varepsilon}^{\varepsilon}
\int_{[\phi=t]}
\frac{1}{|\nabla\phi|}
\,d\mathcal H^{n-1}\,dt .
\]
Using \eqref{eq_levelsetest}, we infer
\[
|[0<|\bar\phi|<\varepsilon]|
\le
\int_{-\varepsilon}^{\varepsilon}
C |t|^{\gamma-1}\,dt
=
2C_0\int_0^\varepsilon t^{\gamma-1}\,dt
=
\frac{2C}{\gamma}\varepsilon^\gamma .
\]
If $|[\bar\phi=0]|=0$, the same estimate holds for
$[|\bar\phi|\le\varepsilon]$.
\end{proof}

\begin{lemma}
Let $\phi\in W^{1,\infty}(\Omega)$. Set $A:=[\phi=0]$ and assume that $A$ is a finite union of compact $C^1$--hypersurfaces. Let $\gamma\in(0,1]$.
Suppose that there exist constants $c>0$ and $\rho>0$ such that
\begin{equation}\label{eq_growth}
|\phi(x)|\ge c\,\operatorname{dist}(x,A)^{1/\gamma}
\end{equation}
for all $x$ with $\operatorname{dist}(x,A)<\rho$.
Assume further that
\begin{equation}\label{eq_inf}
    \inf_{[\operatorname{dist}(\cdot,A)\geq \rho]} \vert \phi \vert>0
\end{equation}
Then there exist constants $C>0$ and $\varepsilon_0>0$ such that for all $0<\varepsilon<\varepsilon_0$
\[
|[|\phi|\le\varepsilon]|
\le C\varepsilon^\gamma.
\]
\end{lemma}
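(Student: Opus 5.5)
Set $d(x):=\operatorname{dist}(x,A)$. The plan is to reduce the measure of the sublevel set of $|\phi|$ to the measure of a tubular neighbourhood of $A$, and then use the $C^1$ structure of $A$ to control that tubular neighbourhood linearly in its width.

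First choose $\varepsilon_0>0$ so small that $\varepsilon_0<\inf_{[d\geq\rho]}|\phi|$, which is possible by \eqref{eq_inf}, and also $(\varepsilon_0/c)^{\gamma}<\rho$. Let $0<\varepsilon<\varepsilon_0$ and take $x\in\Omega$ with $|\phi(x)|\le\varepsilon$. If $d(x)\geq\rho$, then \eqref{eq_inf} gives $|\phi(x)|>\varepsilon_0>\varepsilon$, which is impossible. Hence $d(x)<\rho$, and \eqref{eq_growth} yields $c\,d(x)^{1/\gamma}\le\varepsilon$, that is, $d(x)\le(\varepsilon/c)^{\gamma}$. This proves the inclusion
\[
[|\phi|\le\varepsilon]\subset A_{r}:=\{x\in\Omega:\ d(x)\le r\},\qquad r:=(\varepsilon/c)^{\gamma}<\rho .
\]

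It therefore suffices to show $|A_r|\le C' r$ for all small $r$, since then $|[|\phi|\le\varepsilon]|\le C'c^{-\gamma}\varepsilon^{\gamma}$. Write $A=\bigcup_{j=1}^{N}\Gamma_j$ with each $\Gamma_j$ a compact $C^1$-hypersurface. Because $A_r\subset\bigcup_j\{d(\cdot,\Gamma_j)\le r\}$, it is enough to treat one compact $C^1$-hypersurface $\Gamma$.

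Here I would use the Minkowski-content bound $\limsup_{r\to0}|\{d(\cdot,\Gamma)\le r\}|/(2r)=\mathcal H^{n-1}(\Gamma)<\infty$, which holds for compact $(n-1)$-rectifiable sets, and in particular for compact $C^1$-hypersurfaces by Federer's theorem. Alternatively, one can argue by hand:
\begin{enumerate}
\item Cover $\Gamma$ by finitely many charts in which $\Gamma$ is the graph of a $C^1$ function $g$ over a compact set $K\subset\mathbb R^{n-1}$, with $\|\nabla g\|_\infty\le L$.
\item Check that every point within distance $r$ of such a graph piece lies in $\{(y',t):\operatorname{dist}(y',K)\le r,\ |t-g(y')|\le (1+L)r\}$, after extending $g$ Lipschitz to a neighbourhood of $K$.
\item Conclude that this set has measure $\lesssim r$ uniformly for small $r$.
\end{enumerate}
Either route gives $|A_r|\le C'r$ for $r\le r_0$. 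Shrinking $\varepsilon_0$ so that $(\varepsilon_0/c)^{\gamma}\le r_0$ then finishes the proof.

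The main obstacle is this tubular-neighbourhood estimate, especially near the boundary of $\Gamma$ if $\Gamma$ has a boundary, and the fact that charts only work locally. Compactness together with the Lipschitz-graph argument handles both issues. Everything else is the elementary inclusion argument above, which needs only \eqref{eq_growth} and \eqref{eq_inf}; the $W^{1,\infty}$ regularity of $\phi$ is not needed beyond continuity.
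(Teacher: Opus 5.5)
Your proposal is correct and follows the paper's proof: you prove the same inclusion $[|\phi|\le\varepsilon]\subset\{\operatorname{dist}(\cdot,A)\le c^{-\gamma}\varepsilon^{\gamma}\}$, and you then bound the tube linearly in its width via the Minkowski-content identity of Federer's Theorem 3.2.39 (your Lipschitz-graph argument is a valid elementary alternative). Your requirement $\varepsilon_0<\inf_{[\operatorname{dist}(\cdot,A)\ge\rho]}|\phi|$ is actually needed for that inclusion, whereas the paper's choice of $\varepsilon_0$ only imposes $c^{-\gamma}\varepsilon_0^{\gamma}<\min\{\rho,r_A\}$ and leaves it implicit.
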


\begin{proof}
Since $A$ is a finite union of compact $C^1$--hypersurfaces, 
$\mathcal H^{n-1}(A)<\infty$.
If $|\phi|\le\varepsilon$ on $[\operatorname{dist}(\cdot,A)<\rho]$, then \eqref{eq_growth} gives $c\operatorname{dist}(x,A)^{1/\gamma}\le \varepsilon$.
Thus  $\operatorname{dist}(x,A)\le c^{-\gamma}\varepsilon^\gamma$ and with \eqref{eq_inf}, $[|\phi|\le\varepsilon]
\subset
[\operatorname{dist}(\cdot,A)\le c^{-\gamma}\varepsilon^\gamma]$.
Since $A$ is a finite union of compact $C^1$--hypersurfaces, it is a
closed $(n-1)$-rectifiable subset of $\mathbb R^n$. By \cite[Theorem 3.2.39]{zbMATH03280855}, $
\mathcal M^{n-1}(A)=\mathcal H^{n-1}(A)$, where $\mathcal M$ denotes the $(n-1)$-dimensional Minkowski content of $A$ which is
\[
\lim_{r\to0_+}
\frac{
\mathcal L^n(\{x\in\mathbb R^n:\operatorname{dist}(x,A)<r\})
}
{2r}
=
\mathcal H^{n-1}(A).
\]
From this we infer the existence of constants $C_A>0$ and $r_A>0$ such that for $0<r<r_A$
\[
\mathcal L^n(\{x\in\mathbb R^n:\operatorname{dist}(x,A)<r\})
\le C_A r.
\]
After decreasing $r_A$ if necessary, we obtain for $0<r<r_A$ that $\mathcal L^n(\{x\in \Omega:\operatorname{dist}(x,A)\le r\})
\le C_A r$.
Choosing $\varepsilon_0$ such that $c^{-\gamma} \varepsilon_0^\gamma<\min\{\rho,r_A\}$ completes the proof.
\end{proof}

From the previous two lemmas we infer that, when the adjoint velocity field $\bar{\mathbf{z}}$ satisfies one of the above conditions for each of its components, then the growth \eqref{eq:firstvargrowth} holds.
Finally, we mention pure second-order conditions that imply a growth as in Theorem \ref{strictilocalopt}. 
Results in this direction for $\gamma=1$ and dimensions $n=2, 3$ were obtained in \cite[Theorem 6.4]{zbMATH06913572} and \cite[Section 5.1]{zbMATH07875626} and were further extended to arbitrary dimensions ($n\in \mathbb N$), in \cite{2026arXiv260214632W}.

\bibliographystyle{siam}
\bibliography{references}

\end{document}